\newcommand{\norma}[1]{{\left\vert\kern-0.25ex\left\vert\kern-0.25ex\left\vert #1
    \right\vert\kern-0.25ex\right\vert\kern-0.25ex\right\vert}}
\newcommand{\sech}{\text{sech}}
\newcommand{\tbu}{\tilde{\mathbf{u}}}
\newcommand{\bd}{\mathbf{d}}
\newcommand{\by}{\mathbf{y}}
\newcommand{\teta}{\tilde{\eta}}
\newcommand{\tphi}{\tilde{\phi}}
\newcommand{\tw}{\tilde{w}}
\newcommand{\tv}{\tilde{v}}
\newcommand{\tf}{\tilde{f}}
\newcommand{\tz}{\tilde{z}}
\newcommand{\Alpha}{\mathrm{A}}
\newcommand{\Beta}{\mathrm{B}}
\newcommand{\bo}{\mathbf{0}}
\newcommand{\btf}{\mathbf{f}}
\newcommand{\bx}{\mathbf{x}}
\newcommand{\bF}{\mathbf{F}}
\newcommand{\bn}{\mathbf{n}}
\newcommand{\bu}{\mathbf{u}}
\newcommand{\bw}{\mathbf{w}}
\newcommand{\bz}{\mathbf{z}}
\newcommand{\bv}{\mathbf{v}}
\newcommand{\bA}{\mathbf{A}}
\newcommand{\bH}{\mathbf{H}}
\newcommand{\bJ}{\mathbf{J}}
\newcommand{\bU}{\mathbf{U}}
\newcommand{\bI}{\mathbf{I}}
\newcommand{\balpha}{\boldsymbol{\alpha}}
\newcommand{\bgamma}{\boldsymbol{\gamma}}
\newcommand{\bchi}{\boldsymbol{\chi}}
\newcommand{\bxi}{\boldsymbol{\xi}}
\newcommand{\Div}{\nabla\!\cdot\!}
\newcommand{\Curl}{\nabla\!\times\!}
\newcommand{\tbn}[1]{{\left\vert\kern-0.25ex\left\vert\kern-0.25ex\left\vert #1 \right\vert\kern-0.25ex\right\vert\kern-0.25ex\right\vert}}
\newtheorem{remark}{Remark}[section]
\newtheorem{proposition}{Proposition}[section]
\newtheorem{theorem}{Theorem}[section]
\newtheorem{corollary}{Corollary}[section]
\title{Bona-Smith-type systems in bounded domains with slip-wall boundary conditions: Theoretical justification and a conservative numerical scheme}
\author{Dimitrios Antonopoulos}
\address{\textbf{D.~Antonopoulos:} University of Athens, Department of Mathematics, 15784 Zographou, Greece}
\email{antonod@math.uoa.gr}
\author{Dimitrios Mitsotakis}
\address{\textbf{D.~Mitsotakis:} Victoria University of Wellington, School of Mathematics and Statistics, PO Box 600, Wellington 6140, New Zealand}
\email{dimitrios.mitsotakis@vuw.ac.nz}
\subjclass[2000]{76M10, 35Q35, 76B25}
\date{\today}
\keywords{Boussinesq systems; long waves; slip-wall conditions; finite element method; energy conservation}
\begin{document}

\begin{abstract}
Considered herein is a class of Boussinesq systems of Bona-Smith type that describe the propagation of long surface water waves of small amplitude in bounded two-dimensional domains with slip-wall boundary conditions and variable bottom topography. Such boundary conditions are necessary in situations involving water waves in channels, ports, and generally in basins with solid boundaries. We prove that, given appropriate initial conditions, the corresponding initial-boundary value problems have unique solutions locally in time, which is a fundamental property of deterministic mathematical modeling. Moreover, we demonstrate that the systems under consideration adhere to three basic conservation laws for water waves: mass, vorticity, and energy conservation.

The theoretical analysis of these specific Boussinesq systems leads to a conservative mixed finite element formulation. Using explicit, relaxation Runge-Kutta methods for the discretization in time, we devise a fully discrete scheme for the numerical solution of initial-boundary value problems with slip-wall conditions, preserving mass, vorticity, and energy. Finally, we present a series of challenging numerical experiments to assess the applicability of the new numerical model.
\end{abstract}

\maketitle

\section{Introduction}\label{sec:intro}

The fundamental equations describing nonlinear and dispersive water waves are the inviscid and incompressible Euler equations of fluid mechanics, accompanied by appropriate boundary conditions \cite{Whitham}. Due to the immense difficulty of studying the Euler equations of water wave theory theoretically and numerically, simplified mathematical models have been proposed as alternative systems to describe water waves in various regimes \cite{Lannes2013}. One such regime of particular interest is the small amplitude long wave regime without surface tension. Examples of small amplitude long water waves include tsunamis, solitary waves, and undular bores. However, the focus has primarily been on one-dimensional unidirectional models with flat bottom topography for the study of traveling waves \cite{Whitham}.

Boussinesq's influential work \cite{Bous1871,Bous1872} laid the foundations of asymptotic water wave models and the study of unidirectional wave propagation, such as the Korteweg-de Vries (KdV) and Benjamin-Bona-Mahony (BBM) equations \cite{KDV1895, BBM1972}, leading to a profound understanding of the dynamics of solitary waves. The need to study practical applications and more complex wave behaviors led to bidirectional water wave models, typically referred to as Boussinesq systems.

One category of asymptotically derived Boussinesq systems for bidirectional wave propagation is the family of the so-called $abcd$-Boussinesq systems. These systems were originally derived in one-dimension with the assumption of horizontal bottom topography \cite{BCS2002, BCS2004, BS} and generalized to two dimensions again with flat bottom \cite{BCL2005}. These systems are notable because they have been asymptotically justified \cite{BCL2005,Lannes2013}. However, not all of them are suitable for studying water wave problems. For instance, the so-called KdV-KdV system does not exhibit classical solitary waves \cite{BDM2007i,BDM2008ii}, violating physical laws, while other alternatives in this category violate mathematical laws by not adhering to the deterministic law of well-posedness, such as the Kaup-Boussinesq system \cite{ABM2019}. Furthermore, none of the systems presented in \cite{BCL2005} could be used directly in bounded domains with slip-wall boundary conditions unless we modify their dispersive terms. For a review of some of the fundamental properties of these systems, we refer to \cite{DM2008}.

In the case of variable bottom topography, several Boussinesq systems have been derived and employed in various applications. Boussinesq systems with variable bottom topography include the Peregrine system \cite{P1967}, a BBM-BBM-type system \cite{Mitsotakis2009}, and Nwogu's system \cite{Nwogu93}. Although these systems are asymptotically equivalent, only the BBM-BBM-type system of \cite{Mitsotakis2009} has been proven well-posed in bounded domains with non-slip wall boundary conditions thus far \cite{DMS2009}. For the Peregrine and Nwogu systems, results are only available in one dimension with reflective boundary conditions \cite{Adamy2011, FP2005, MM2023}. In addition to the lack of theoretical justification, none of the previously mentioned systems preserve any reasonable form of energy in bounded domains with variable bottom, thus constructing energy-preserving numerical schemes is beyond the scope of these systems. It is noteworthy that all these systems, when reduced to the case of horizontal bottom topography and in one dimension, are included in the family of $abcd$-Boussinesq systems of \cite{BCS2002}.

In addition to the $abcd$-Boussinesq systems, there are numerous other approximations of the Euler equations that describe long water waves. These approximations include the Serre-Green-Naghdi equations \cite{S1953I,S1953II,SG1969,GN1976}, improved Serre-Green-Naghdi equations \cite{CDM2017,CDM2024}, the Madsen-Murray-S{\o}rensen system \cite{MMS1991}, non-hydrostatic shallow water approximations \cite{BGSM2011,BMSMS2015,WMKB2020},  hyperbolic approximations of water wave systems \cite{ALB2009,FG2017,D2019} as well as other systems with various enhancements \cite{GKW2000,MS1992,MMS1991,SM1995,MBL2002}. These high-order systems, some with enhanced dispersion characteristics, warrant further study. 

A new generalization of the family of $abcd$-Boussinesq systems of \cite{BCS2002,BCL2005} (see also \cite{chazel1, chazel2}) for variable bathymetry in two dimensions with small variations was proposed by \cite{IKM2023,IKKM2021}. By denoting $\bx=(x,y)\in\mathbb{R}^2$, $t\geq 0$ as the spatial and temporal independent variables respectively,  $D=D(\bx)$ as the bathymetry measured from the zero surface level, $\eta=\eta(\bx,t)$ as the free-surface elevation above the undisturbed zero-level of the water, and $\bu=\bu(\bx,t)$ as the horizontal velocity of the fluid at depth $z_\theta=-D+\theta(\eta+D)$ for some $\theta\in[0,1]$, this family of $abcd$-Boussinesq systems can be expressed in dimensional and unscaled variables as follows:
\begin{equation}\label{eq:abcd}
\begin{aligned}
& \eta_t+\Div[(D+\eta)\bu]-\Div\left\{a\nabla(D^3\Div\bu)+bD^2\nabla\eta_t\right\}=0\ ,\\
& \bu_t+g\nabla\eta+\tfrac{1}{2}\nabla|\bu|^2 -\nabla\left\{cg\Div(D^2\nabla\eta)+d\Div(D^2\bu_t)\right\}=0\ ,
\end{aligned}
\end{equation}
where
\begin{equation}\label{eq:abcdcoefs}
a=\frac{1}{2}\left(\frac{1}{3}-\theta^2\right)\mu,\quad b=\frac{1}{2}\left(\theta^2-\frac{1}{3}\right)(1-\mu), \quad c=\frac{1}{2}(\theta^2-1)\nu, \quad d= \frac{1}{2}(1-\theta^2)(1-\nu)\ ,
\end{equation}
for fixed $\mu,\nu\in \mathbb{R}$ and $0\leq \theta\leq 1$. The parameters $\mu$ and $\nu$ do not have any physical interpretation and are bi-products of the process of the asymptotic derivation while $\theta$ is related with the depth where we compute the horizontal velocity $\bu$ (see the Appendix for a derivation). In the previous notation, $\nabla$ is the gradient operator with respect to the spatial variable $\bx$. These systems can also incorporate moving bottom topography \cite{IKM2023}, but for the purposes of this work, we assume that the bottom is stationary. Furthermore, these systems preserve the vorticity, and when $b=d>0$, they preserve a reasonable form of energy \cite{IKKM2021}.

In this work, we investigate a specific subset of systems (\ref{eq:abcd}). In particular, we study the case $b=d=\frac{3\theta^2-1}{6}>0$ and $c=\frac{3\theta^2-2}{3}>0$ for $\theta^2\in\left(2/3,1\right)$, including the limiting cases $\theta^2=2/3$ and $\theta^2=1$. These systems can be derived from (\ref{eq:abcd})-(\ref{eq:abcdcoefs}) by setting $\mu=0$ and $\nu=\frac{4-6\theta^2}{3(1-\theta^2)}$. They are of particular interest due to their suitable physical and mathematical properties for analyzing small amplitude long water waves. They are also asymptotically justified \cite{IKM2023} compared to the Euler equations, possess classical traveling wave solutions \cite{Chen1998,Chen2000,DM2004,DM2008}, and maintain a reasonable form of total energy. Reducing these systems in one dimension and with a flat bottom topography, they coincide with the Bona-Smith systems of \cite{BCS2002,BS}, and for this reason, we will refer to them as Bona-Smith systems. It is worth mentioning that the limiting case $\theta^2=2/3$, where $c=0$ and $b=d=1/6$, is called BBM-BBM system (or regularized shallow water waves equations) studied in \cite{IKKM2021,KMS2020}. The limiting case $\theta^2=1$ is called classical Bona-Smith system (\cite{BS}) and is analyzed in this work.

It is noteworthy that irrotationality is an intrinsic component of the derivation of the $abcd$-Boussinesq systems of \cite{BCL2005} (including Peregrine's system \cite{P1967} as well as other systems in the same regime). Even if these Boussinesq  systems do not preserve the vorticity, these can be converted to asymptotically equivalent systems of the form (\ref{eq:abcd}), which preserves the vorticity, \cite{KMS2020,IKKM2021,IKM2023}. This is aligned with the observations in \cite{LL2013}, where it is stated that even if realistic flows are not potential, vorticity can be negligible in initially irrotational flows or in flows involving long waves of small amplitude (such as solitary and cnoidal waves). Realistic free-surface flows, however, might not remain irrotational even if they satisfy irrotationality at an initial stage. For example, some water waves break under the influence of bottom topography. Therefore, the validity of Boussinesq systems is limited to non-breaking waves \cite{P1967}. Mathematical modelling of water waves in the presence of vorticity has also been carried out in a different context cf. e.g. \cite{CL2014}.

In this work, we assume irrotational flow and we concentrate our attention on  Bona-Smith-type systems posed in a bounded domain $\Omega$ with slip-wall boundary conditions, where we typically assume that the boundary $\partial\Omega$ of $\Omega$ is sufficiently smooth. Such boundary conditions are required to study waves in ports, channels as well as interactions of long waves waves with marine structures. We assume initial conditions $\eta(\bx,0)=\eta_0(\bx)$ and $\bu(\bx,0)=\bu_0(\bx)$, where the initial velocity field is irrotational, i.e., $\Curl\bu_0(\bx)=0$ in $\Omega$. This guarantees the existence of an initial velocity potential $\phi_0$ such that $\bu_0(\bx)=\nabla\phi_0(\bx)$ in $\Omega$. After establishing the existence and uniqueness of solutions for these Bona-Smith systems, we introduce an energy-preserving fully discrete scheme for approximating these solutions. Such numerical methods ensure accurate results in long-time simulations and, in general, are more accurate than non-conservative methods.

Traditionally, Galerkin discretizations applied to problems in bounded domains with slip-wall boundary conditions incorporate Nitsche's method \cite{Nitche}. In the context of the BBM-BBM system with parameters $a=c=0$ and $b=d=1/6$, the standard Galerkin finite element method of \cite{KMS2020,IKKM2021} has been successfully employed alongside with Nitsche's method. However, the application of Nitsche's method for slip-wall boundary conditions presents challenges to energy conservation, necessitating intricate techniques such as solving large nonlinear systems for both the solution and the corresponding Lagrange multipliers.

In contrast, the numerical method proposed in this work operates on a formulation of the Bona-Smith systems based on the horizontal velocity potential $\phi$, where $\mathbf{u}=\nabla\phi$. This approach reduces the problem's dimensionality and facilitates a stable numerical discretization that circumvents the need for applying Nitsche's method to handle slip-wall boundary conditions \cite{Nitche}. This numerical method extends the modified Galerkin method and the idea of the discrete Laplacian of \cite{DMS2010} and results in a mixed formulation \cite{BBF2013} that allows the use of linear elements regardless the presence of a high-order derivative term. The numerical solutions of the proposed method preserve the mass, vorticity and energy functionals. For the temporal discretization we employ a fourth-order explicit relaxation Runge-Kutta method that respects the mass and energy conservation laws. 

Importantly, it is worth noting that the proposed numerical method can also be applied to solve the BBM-BBM system described in \cite{KMS2020,IKKM2021}, as it represents a special case of the Bona-Smith systems when $\theta^2= 2/3$.

The structure of this paper is as follows: In Section \ref{sec:exist}, we establish the existence and uniqueness of solutions to the initial-boundary value problem of the Bona-Smith systems with slip-wall boundary conditions. This makes the new Bona-Smith system the first $abcd$-Boussinesq system with variable bottom justified with slip-wall boundary conditions in two dimensions. Additionally, we review its basic properties such as conservation laws, linear dispersion characteristics, and the existence of line solitary waves. In Section \ref{sec:modgal}, we present a conservative modified Galerkin method for the numerical discretization of the aforementioned initial-boundary value problem. We provide details on the choice of initial conditions and present the explicit relaxation Runge-Kutta method that we employ for the discretization in time. We conclude this paper with numerical experiments. In Section \ref{sec:expval}, after computing experimental convergence rates for some exact solutions, we test the numerical model against standard benchmarks.

\section{Justification of the mathematical model}\label{sec:exist}

We consider the Bona-Smith system of Boussinesq equations, \cite{IKKM2021}, written in the form
\begin{equation}\label{eq:BS}
\begin{aligned}
&\eta_t+\Div [(D+\eta)\bu]-b\Div\left\{D^2\nabla\eta_t\right\}=0\ ,\\
&\bu_t+g\nabla\eta+\tfrac{1}{2}\nabla|\bu|^2-\nabla\left\{cg\Div(D^2\nabla\eta)+b\Div(D^2\bu_t)\right\}=0\ ,
\end{aligned}
\end{equation}
where
\begin{equation}\label{eq:coeffs}
b=\frac{3\theta^2-1}{6}>0, \quad c=\frac{3\theta^2-2}{3}> 0 \quad \text{and}\quad  \frac{2}{3}<\theta^2\leq 1\ .
\end{equation}
The limiting case where $\theta^2=2/3$ corresponds to the so-called BBM-BBM system (or regularized shallow-water equations) which has been studied in \cite{IKKM2021}. Moreover, the Bona-Smith systems (\ref{eq:BS}) can be used to study waves in a bounded domain $\Omega$ where the boundary consists of solid walls with no friction (slip-walls). Slip-walls can be described by the boundary conditions
\begin{equation}\label{eq:bcs}
\nabla\eta\cdot\bn=0\quad \text{and}\quad \bu\cdot\bn=0 \quad \text{on $\partial\Omega$}\ .
\end{equation}
Note that the solutions of the Euler equations satisfy the same conditions on a slip-wall \cite{IKM2023}, and therefore such boundary conditions are justified.

In what follows, we will study the initial-boundary value problem comprised the system (\ref{eq:BS}), the boundary conditions (\ref{eq:bcs}), and the initial conditions:
\begin{equation}\label{eq:ics}
\eta(\bx,0)=\eta_0(\bx)\quad \text{and}\quad \bu(\bx,0)=\bu_0(\bx)\quad \text{for $\bx\in\Omega$}\ .
\end{equation}
This serves as a model initial-boundary value problem for studying the propagation of long nonlinear waves of small amplitude in a bounded region.

\subsection{Energy conservation and Hamiltonian structure}

As demonstrated in \cite{IKM2023}, the solutions of the initial-boundary value problem (\ref{eq:BS})--(\ref{eq:bcs}) preserve the mass  
\begin{equation}\label{eq:mass}
\mathcal{M}(t;\eta)=\int_{\Omega}\eta~d\bx\ ,
\end{equation}
the vorticity
\begin{equation}\label{eq:vorticity}
\mathcal{V}(t;\bu)=\int_\Omega \Curl \bu ~d\bx\ ,
\end{equation}
and the energy functional
\begin{equation}\label{eq:energy}
\mathcal{E}(t;\eta,\bu)=\frac{1}{2}\int_\Omega g\eta^2+(D+\eta)|\bu|^2+cgD^2|\nabla\eta|^2 ~d\bx\ .
\end{equation}
Note that the integral in (\ref{eq:vorticity}) is not necessary since the vorticity $\Curl\bu$ is preserved pointwise. Also, in the case of general bottom topography, the so-called impulse functional is not preserved since the equations are not invariant under horizontal translations -- a property that holds true only when the bottom is flat. Note that the term impulse is used for quadratic invariants of water waves systems, and should not be confused with the momentum conservation \cite{Benjamin1984}.

Finally, we mention that the system (\ref{eq:BS}) is Hamiltonian: Writing the energy density as 
\begin{equation}
E(\eta,\bu)=\tfrac{1}{2}\left(g\eta^2+(D+\eta)|\bu|^2-cg\Div(D^2\nabla\eta) \eta \right)\ ,
\end{equation}
then the problem (\ref{eq:BS}) has the Hamiltonian structure:
\begin{equation}\label{eq:hamilton}
\bw_t=
\bJ\mathcal{D}\mathcal{E}(\bw)\ ,
\end{equation}
where $\bw=(\eta,\bu)^T$, $\mathcal{D}$ denotes the variational derivative, and $\bJ$ is the skew-adjoint operator
$$\bJ=\begin{pmatrix}
    0 & [I-b\Div(D^2\nabla )]^{-1}\Div\\
    [I-b\Div(D^2\nabla )]^{-1}\nabla & 0
\end{pmatrix} \ .$$

\subsection{Existence}

Here we establish the existence and uniqueness of solutions to the initial-boundary value problem (\ref{eq:BS}), (\ref{eq:bcs}), (\ref{eq:ics}).

Under the irrotationality assumption $\Curl\bu_0(\bx)=0$ of the initial velocity field and the fact that the velocity $\bu$ of the system (\ref{eq:BS}) satisfies $\Curl\bu(\bx,t)=\Curl\bu(\bx,0)$ for all $t\geq 0$, there is a velocity potential $\phi(\bx,t)$ such that  $\bu=\nabla\phi$. The corresponding initial-boundary value problem for the velocity potential can be written as: 
\begin{equation}\label{eq:BS2}
\begin{aligned}
&\begin{aligned}
&\eta_t+\Div [(D+\eta)\nabla\phi]-b\Div\left\{D^2\nabla\eta_t\right\}=0\ ,\\
&\phi_t+g\eta+\tfrac{1}{2}|\nabla\phi|^2-cg\Div(D^2\nabla\eta)-b\Div(D^2\nabla\phi_t)=0\ ,
\end{aligned}\quad \text{in $\Omega$}\ ,\\
&\eta(\bx,0)=\eta_0(\bx)\quad \text{and}\quad \phi(\bx,0)=\phi_0(\bx)\ ,\\
&\nabla\eta\cdot\bn=0\quad \text{and}\quad \nabla\phi\cdot\bn=0\quad \text{on $\partial\Omega$}\ .
\end{aligned}
\end{equation}
Note that if $(\eta,\phi)$ is a solution of the problem (\ref{eq:BS2}), then $(\eta,\bu)=(\eta,\nabla\phi)$ is a solution of the problem (\ref{eq:BS}), (\ref{eq:bcs}) with initial conditions $\eta(\bx,0)=\eta_0(\bx)$ and $\bu(\bx,0)=\bu_0(\bx)=\nabla\phi_0(\bx)$. Thus, we first establish the existence of solutions to (\ref{eq:BS2}). 

For simplicity of the notation we assume $g=1$. Moreover, for functions $u,v$, we will write $u \lesssim v$ if and only if $u\leq C v$ for some constant $C>0$. We will denote by $W^{k,p}=W^{k,p}(\Omega)$ the usual Sobolev space with $k$-times weakly differentiable functions in $\Omega$, and its corresponding norm $\|\cdot\|_{k,p}$. In the case $p=2$, we will denote by $H^k$ the spaces $W^{k,2}$ and the corresponding norms by $\|\cdot\|_k$. The index $k$ will be omitted when $k=0$ and $p=2$, i.e., when $H^0=L^2$.

For the analysis of this work, we assume that the depth function $D(\bx)\in W^{2,\infty}$, and that there are $D_m,D_M,D_m'$ and $D_M'$ such that
\begin{equation}\label{eq:depthcondition}
0<D_m\leq D(\bx)\leq D_M\quad \text{and}\quad 0<D_m'\leq |\nabla D(\bx)|\leq D_M'\quad \text{for all $\bx\in\Omega$}\ .
\end{equation}
Moreover, we assume that the domain $\Omega$ is of class $\mathcal{C}^{1,1}$ or smoother.

We define the bilinear forms $A:H^1\times H^1\to \mathbb{R}$ and $B:H^1\times H^1\to \mathbb{R}$
\begin{equation}
A_d(u,v)=(u,v)+d(D\nabla u,D\nabla v)\quad \text{and}\quad B_d(u,v)=(Du,Dv)+d(D^2\nabla u,D^2\nabla v)\ ,
\end{equation}
where $d>0$, and
$$(u,v)=\int_\Omega uv~ d\bx\ ,$$
the usual $L^2$ inner product. Because of the condition (\ref{eq:depthcondition}), these two bilinear forms are coercive and continuous.

We define the weak solution of (\ref{eq:BS2}) to be any solution $(\eta,\phi)\in H^2\times H^2$ that satisfies the problem
\begin{equation}\label{eq:weakform}
\begin{aligned}
& A_b(\eta_t,w)-((D+\eta)\nabla\phi,\nabla w)=0 \quad \text{for all $w\in H^2$}\ ,\\
& B_b(\phi_t,v)+B_c(\eta,v)+\tfrac{1}{2}(D^2|\nabla\phi|^2,v)=0 \quad \text{for all $v\in H^2$}\ ,\\
&\eta(\bx,0)=\eta_0(\bx)\quad \text{and}\quad \phi(\bx,0)=\phi_0(\bx)\ .
\end{aligned}
\end{equation}

\begin{proposition}\label{prop:existence}
If $(\eta_0,\phi_0)\in H^2\times H^2$, then
there is a time $T>0$ such that the problem (\ref{eq:weakform}) has a unique weak solution $(\eta,\phi)\in C^1([0,T];H^2\times H^2)$.
\end{proposition}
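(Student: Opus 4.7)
The plan is to rewrite the weak system (\ref{eq:weakform}) as an abstract ODE $\tfrac{d}{dt}(\eta,\phi) = \mathcal{F}(\eta,\phi)$ on the Banach space $X = H^2 \times H^2$ and apply the Picard--Lindel\"of theorem for ODEs in Banach spaces. The depth hypothesis (\ref{eq:depthcondition}) makes both $A_b$ and $B_b$ coercive and continuous on $H^1$, so by Lax--Milgram each defines a topological isomorphism $H^1 \to (H^1)^*$. For any $(\eta,\phi) \in X$, the linear functionals $w \mapsto ((D+\eta)\nabla\phi, \nabla w)$ and $v \mapsto -B_c(\eta,v) - \tfrac{1}{2}(D^2|\nabla\phi|^2, v)$ are continuous on $H^1$, so the two equations in (\ref{eq:weakform}) uniquely determine elements $\mathcal{F}_1(\eta,\phi), \mathcal{F}_2(\eta,\phi) \in H^1$ that play the role of $\eta_t$ and $\phi_t$. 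The natural boundary conditions $\nabla \eta_t \cdot \bn = \cdots$ and $\nabla \phi_t \cdot \bn = \cdots$ are encoded in this weak formulation and will be preserved along the evolution.

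The first substantive task is to upgrade the range of $\mathcal{F} := (\mathcal{F}_1,\mathcal{F}_2)$ from $H^1$ to $H^2$. Since in two dimensions $H^2$ is a Banach algebra and $H^2 \cdot H^1 \subset H^1$, the vector field $V := (D+\eta)\nabla\phi$ lies in $H^1(\Omega)^2$, and integration by parts recasts the first functional as $w \mapsto (f,w) + \langle g, w\rangle_{\partial\Omega}$ with $f = -\Div V \in L^2(\Omega)$ and $g = V\cdot\bn \in H^{1/2}(\partial\Omega)$. Hence $\mathcal{F}_1(\eta,\phi)$ weakly solves the inhomogeneous Neumann problem $u - b\Div(D^2 \nabla u) = f$, $bD^2\nabla u \cdot \bn = g$, and standard $H^2$ elliptic regularity on a $\mathcal{C}^{1,1}$ domain with $W^{2,\infty}$ coefficients yields $\mathcal{F}_1(\eta,\phi) \in H^2$ with a quantitative estimate in terms of $\|f\|_{L^2}$ and $\|g\|_{H^{1/2}(\partial\Omega)}$. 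The same scheme applies to $\mathcal{F}_2$: the linear contribution from $B_c(\eta,\cdot)$ reduces, after integration by parts, to $L^2$ interior data and an $H^{1/2}$ boundary trace, while the nonlinear term is handled through $\|D^2|\nabla\phi|^2\|_{L^2} \lesssim \|\nabla\phi\|_{L^4}^2 \lesssim \|\phi\|_{H^2}^2$ via the two-dimensional embedding $H^1 \hookrightarrow L^4$.

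Local Lipschitz continuity of $\mathcal{F}: X \to X$ then follows by subtracting the defining weak equations and reusing the same algebra and elliptic bounds. Decomposing
\[
(D+\eta)\nabla\phi - (D+\tilde\eta)\nabla\tilde\phi \;=\; (D+\tilde\eta)\nabla(\phi - \tilde\phi) + (\eta - \tilde\eta)\nabla\phi
\]
produces, after applying the same regularity estimate, a bound of the form
\[
\|\mathcal{F}_1(\eta,\phi) - \mathcal{F}_1(\tilde\eta,\tilde\phi)\|_{H^2} \;\lesssim\; \bigl(1 + \|\tilde\eta\|_{H^2} + \|\phi\|_{H^2}\bigr)\bigl(\|\eta - \tilde\eta\|_{H^2} + \|\phi - \tilde\phi\|_{H^2}\bigr),
\]
together with an analogous estimate for $\mathcal{F}_2$, in which the quadratic difference is controlled via $|\nabla\phi|^2 - |\nabla\tilde\phi|^2 = \nabla(\phi - \tilde\phi)\cdot\nabla(\phi + \tilde\phi)$. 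The Picard--Lindel\"of theorem for ODEs in Banach spaces then delivers a unique $C^1([0,T]; X)$ solution for some $T>0$ depending only on $\|(\eta_0,\phi_0)\|_X$. The principal obstacle---and the source of the regularity hypotheses on $\partial\Omega$ and $D$---is precisely this $H^2$ elliptic regularity step: the natural boundary datum $V\cdot\bn$ is generically nonzero, so one must invoke the $\mathcal{C}^{1,1}$ regularity of $\partial\Omega$ together with the $W^{2,\infty}$ smoothness of the bathymetry to close the $H^2$ bound; once this estimate is in place, the remaining ingredients (Lax--Milgram, Sobolev embeddings, Picard iteration) are routine.
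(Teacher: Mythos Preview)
Your proposal is correct and follows essentially the same strategy as the paper: rewrite the weak system as an ODE on $H^2\times H^2$, use elliptic regularity for the operator $I - b\,\Div(D^2\nabla\cdot)$ to show the right-hand side maps $H^2\times H^2$ into itself, and then invoke Picard--Lindel\"of. The paper packages the elliptic solves into abstract maps $f,s,z$ (defined by $A_b(f[u],w)=(u,\nabla w)$, etc.) and cites elliptic theory for the $H^2$ bounds, whereas you spell out the inhomogeneous Neumann problem explicitly after integrating by parts; these are two presentations of the same regularity step. The only other cosmetic difference is that the paper checks $\bF\in C^1$ (computing the Fr\'echet derivative), while you verify local Lipschitz continuity directly---either suffices for the ODE argument.
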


\begin{proof}
We define the mapping $f:L^2\to H^1$ such that for $u\in L^2$
\begin{equation}
    A_b(f[u],w)=(u,\nabla w)\quad\text{for all $w\in H^1$}\ .
\end{equation}
Taking $w=f[u]$ we see that $\|f[u]\|_1\lesssim \|u\|$. Moreover, from the theory of elliptic problems, we have that if $u\in H^1$, then $f[u]\in H^2$ and $\|f[u]\|_2\lesssim \|u\|_1$, \cite{Brezis}.

Additionally, we define the mappings $s:L^2\to H^1$ and $z:H^1\to H^1$ such that for $u\in L^2$ and $v\in H^1$
\begin{equation}
B_b(s[u],w)=-\tfrac{1}{2}(u,w)\quad \text{and}\quad B_b(z[v],w)=-B_c(v,w)\quad\text{for all $w\in H^1$}\ ,
\end{equation}
For $u\in L^2$ we have from the theory of elliptic equations that $\|s[u]\|_2\lesssim \|u\|$. For $v\in H^2$ we have $\|z[v]\|_2\lesssim \|v\|_2$. 

Using these mappings, we rewrite the system (\ref{eq:weakform}) as a system of ordinary differential equations 
\begin{equation}\label{eq:ode}
\bv_t=\bF(\bv)\ ,
\end{equation}
where $\bv=(\eta,\phi)^T$ and $\bF:H^2\times H^2\to H^2\times H^2$ such that
\begin{equation}\label{eq:rhs}
\bF(\bv)=(f[(D+\eta)\nabla\phi],~z[\eta]+s[D^2|\nabla\phi|^2])^T\ .
\end{equation}
Note that we assumed that $\phi,\eta\in H^2$ and $D\in W^{2,\infty}$. Thus, by Grisvard's lemma \cite{BH2015,Grisvard,GR1986}, we have that $(D+\eta)\nabla \phi\in H^1$ and $D^2|\nabla\phi|^2\in L^2$. Thus, $\bF$ is well-defined. Furthermore, $\bF\in C^1(H^2\times H^2,H^2\times H^2)$ with derivative
\begin{equation}
\bF'[\eta^\ast,\phi^\ast](\eta,\phi)=(f[(D+\eta^\ast)\nabla\phi+\eta\nabla\phi^\ast],z[\eta]+s[2D^2\nabla\phi^\ast\nabla\phi])^T\ ,
\end{equation}
and
$$
\begin{aligned}
    \|\bF'[\eta^\ast,\phi^\ast](\eta,\phi)\|_2^2 &= \|f[(D+\eta^\ast)\nabla\phi+\eta\nabla\phi^\ast]\|_2^2+\|z[\eta]+s[2D^2\nabla\phi^\ast\nabla\phi]\|_2^2\\
    &\lesssim \|D\nabla\phi\|_1^2+\|\eta^\ast\nabla\phi\|_1^2+\|\eta\nabla\phi^\ast\|_1^2+\|\eta\|_2^2+\|D^2\nabla\phi^\ast\nabla\phi\|^2\\
    &\leq C(\eta^\ast,\phi^\ast)(\|\eta\|_2^2+\|\phi\|_2^2)\ .
\end{aligned}
$$
From the theory of ordinary differential equations in Banach spaces, we conclude that there is a unique solution $\bv\in H^2\times H^2$ of the problem (\ref{eq:weakform}).
\end{proof}

An immediate consequence of the last result is the following:
\begin{corollary}\label{cor:exist}
There is $T>0$ such that the initial-boundary value problem (\ref{eq:BS})--(\ref{eq:ics}) of the Bona-Smith system with $u_0(\bx)=\nabla\phi_0(\bx)$ has a solution $(\eta,\bu)=(\eta,\nabla\phi)\in C^1([0,T];H^2\times \bH^1)$, where $\bH^1=H^1\times H^1$ and $\phi_0(\bx)=\phi(x,0)$. 
\end{corollary}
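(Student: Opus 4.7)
The plan is to derive the corollary as a direct consequence of Proposition \ref{prop:existence}. Starting from initial data $\eta_0\in H^2$ and irrotational $\bu_0\in\bH^1$ with $\bu_0=\nabla\phi_0$ for some $\phi_0\in H^2$ (whose existence is built into the hypothesis), Proposition \ref{prop:existence} supplies a unique weak solution $(\eta,\phi)\in C^1([0,T];H^2\times H^2)$ of (\ref{eq:BS2}). Setting $\bu:=\nabla\phi$ and using the continuity of the gradient operator $\nabla:H^2\to\bH^1$, one immediately has $\bu\in C^1([0,T];\bH^1)$, which matches the regularity asserted in the statement.

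What remains is to check that $(\eta,\bu)$ solves the strong initial-boundary value problem (\ref{eq:BS})--(\ref{eq:ics}). First I would recover the strong form of (\ref{eq:BS2}) from the weak equations (\ref{eq:weakform}): choosing test functions $w,v\in C_c^\infty(\Omega)$ and integrating by parts, which is justified because $\eta,\phi\in H^2$ and $D\in W^{2,\infty}$, each term admits an $L^2$ representative and (\ref{eq:BS2}) holds pointwise almost everywhere in $\Omega$. The first equation of (\ref{eq:BS}) is then the first equation of (\ref{eq:BS2}) with $\bu=\nabla\phi$ substituted, and the second equation of (\ref{eq:BS}) follows by applying $\nabla$ to the second equation of (\ref{eq:BS2}).

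For the slip-wall conditions (\ref{eq:bcs}), I would enlarge the test function class back to all of $H^2$ in the weak formulation. Subtracting the already-established strong equations, the surviving boundary integrals, combined with the positivity $D\geq D_m>0$ from (\ref{eq:depthcondition}), the $C^{1,1}$ regularity of $\partial\Omega$, and the density of traces of $H^2$ functions in $H^{3/2}(\partial\Omega)$, force the natural conditions $\nabla\eta\cdot\bn=0$ and $\nabla\phi\cdot\bn=\bu\cdot\bn=0$ on $\partial\Omega$. The initial conditions (\ref{eq:ics}) are inherited directly from those of (\ref{eq:BS2}). The main step that requires attention is the bookkeeping of the boundary contributions arising from the bilinear forms $A_b$, $B_b$, and $B_c$: the coefficients involve $D$ and its derivatives, and one must arrange the integrations by parts so that only the desired natural conditions survive after the interior terms cancel against (\ref{eq:BS2}).
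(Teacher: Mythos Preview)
Your approach is correct and matches the paper's: the corollary is stated there simply as ``an immediate consequence'' of Proposition~\ref{prop:existence}, with the passage from $(\eta,\phi)$ to $(\eta,\nabla\phi)$ already noted just before that proposition. You supply considerably more justification---recovering the strong equations and the natural boundary conditions from the weak formulation---than the paper itself does, but the underlying route is the same.
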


\subsection{Uniqueness}

It remains to show that the solution is unique, and this is the focus of the following theorem.

\begin{theorem}\label{thm:existence}
Let $(\eta_0,\bu_0)\in H^2\times H^1$ such that $\bu_0=\nabla\phi_0$ for some $\phi_0\in H^2$. Then, there is a $T>0$ such that the initial-boundary value problem (\ref{eq:BS})--(\ref{eq:ics}) of the Bona-Smith system has a unique solution $(\eta,\bu)=(\eta,\nabla\phi)\in C^1([0,T];H^2\times \bH^1)$ where $\phi_0(\bx)=\phi(\bx,0)$.
\end{theorem}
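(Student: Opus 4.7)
The plan is to reduce uniqueness for the velocity formulation (\ref{eq:BS})--(\ref{eq:ics}) to the uniqueness of weak solutions of the potential formulation (\ref{eq:weakform}), which was already established in Proposition \ref{prop:existence}. Existence is immediate from Corollary \ref{cor:exist}, so only the uniqueness half requires a separate argument. Let $(\eta_i, \bu_i)$, $i = 1, 2$, be two solutions in $C^1([0,T]; H^2 \times \bH^1)$ with common initial data $(\eta_0, \bu_0 = \nabla\phi_0)$.

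The key observation is that the momentum equation in (\ref{eq:BS}) exhibits $\bu_{i,t}$ as a pure gradient: rearranging gives $\bu_{i,t} = \nabla \psi_i$ with the scalar
\begin{equation*}
\psi_i := -g\eta_i - \tfrac{1}{2}|\bu_i|^2 + cg\Div(D^2 \nabla \eta_i) + b\Div(D^2 \bu_{i,t}).
\end{equation*}
In particular $\Curl \bu_{i,t} \equiv 0$, so $\Curl \bu_i \equiv 0$ on $[0,T]\times \Omega$. I would then define
\begin{equation*}
\phi_i(\bx, t) := \phi_0(\bx) + \int_0^t \psi_i(\bx, s)\,ds .
\end{equation*}
A direct computation based on $\nabla \phi_0 = \bu_0$ yields $\nabla \phi_i = \bu_i$ and $\phi_{i,t} = \psi_i$; plugging these into the second equation of (\ref{eq:BS2}) gives $0 = 0$ identically, while the first equation of (\ref{eq:BS2}) is simply the mass equation of (\ref{eq:BS}) with $\bu_i = \nabla\phi_i$. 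Hence $(\eta_i, \phi_i)$ is a weak solution of the potential formulation (\ref{eq:weakform}) with common data $(\eta_0, \phi_0)$.

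Applying the uniqueness part of Proposition \ref{prop:existence} then forces $\eta_1 \equiv \eta_2$ and $\phi_1 \equiv \phi_2$ on $[0,T]$, and consequently $\bu_1 = \nabla\phi_1 = \nabla\phi_2 = \bu_2$. The delicate step in the plan is verifying $\phi_i \in C^1([0,T]; H^2)$ so that Proposition \ref{prop:existence} applies. The formula defining $\psi_i$ only puts it in $L^2$ at face value (the term $\tfrac{1}{2}|\bu_i|^2$ is in $L^p$ for every finite $p$ but not manifestly in $H^1$ in two dimensions, and $\Div(D^2\nabla\eta_i)$ is a priori only in $L^2$); however, the identity $\nabla \psi_i = \bu_{i,t} \in \bH^1$ combined with $D \in W^{2,\infty}$ and a bounded-domain normalization upgrades $\psi_i$ to $H^2$ by elliptic regularity, and the $C^1$-in-time behaviour of $t\mapsto\phi_i(\cdot,t)$ then follows from $\bu_i \in C^1([0,T]; \bH^1)$ together with continuous dependence in the regularizing step.
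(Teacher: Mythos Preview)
Your approach is correct but genuinely different from the paper's. The paper proves uniqueness \emph{directly} in the velocity formulation by an energy/Gronwall argument: for two solutions $(\eta_i,\bu_i)$ it sets $H=\eta_1-\eta_2$, $\bU=\bu_1-\bu_2$, introduces weighted norms $\norma{H}_2^2=\|H\|^2+(b+c)\|D\nabla H\|^2+bc\|\Div(D^2\nabla H)\|^2$ and $\norma{\bU}_1^2=\|D\bU\|^2+b\|\Div(D^2\bU)\|^2$, multiplies the difference of the mass equations by $H-c\Div(D^2\nabla H)$ and the difference of the momentum equations by $D^2\bU$, and after integration by parts obtains $\tfrac{d}{dt}(\norma{H}_2^2+\norma{\bU}_1^2)\lesssim\norma{H}_2^2+\norma{\bU}_1^2$. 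Your route---reconstructing an explicit potential $\phi_i=\phi_0+\int_0^t\psi_i$ from the gradient structure of the momentum equation and invoking the ODE uniqueness of Proposition~\ref{prop:existence}---is more economical in that it recycles work already done, and your regularity check ($\psi_i\in L^2$ from the explicit formula, $\nabla\psi_i=\bu_{i,t}\in\bH^1$, hence $\psi_i\in H^2$; no ``elliptic regularity'' is really needed here, just the definition of $H^2$) is sound. The paper's direct estimate, on the other hand, is self-contained and yields a quantitative stability bound in the velocity variables themselves; it does not rely on reconstructing a scalar potential and would adapt more readily to settings where the momentum equation is not in pure gradient form.
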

\begin{proof}
The existence of $\bu$ was established in Corollary \ref{cor:exist}.
To prove the uniqueness of $\bu$ we use contradiction. We assume that there are two solutions $(\eta_1,\bu_1)$ and $(\eta_2,\bu_2)$ satisfying the initial-boundary value problem (\ref{eq:BS})--(\ref{eq:ics}). We define $H=\eta_1-\eta_2$ and $\bU=\bu_1-\bu_2$. We also define for $v\in H^2$ and $\bv\in \bH^1$ the norms 
$$\norma{v}_2^2=\|v\|^2+(b+c)\|D\nabla v\|^2+bc\|\Div\{D^2\nabla v\}\|^2\ ,$$
and 
$$\norma{\bv}_1^2=\|D\bv\|^2+b\|\Div(D^2\bv)\|^2\ .$$
It is easy to see that the two norms $\norma{\cdot}_2$ and $\norma{\cdot}_1$ are equivalent to the corresponding classical norms $\|\cdot\|_2$ and $\|\cdot\|_1$ due to the assumptions on bottom topography (\ref{eq:depthcondition}). Subtracting the corresponding systems (\ref{eq:BS}) for the two solutions $(\eta_1,\bu_1)$ and $(\eta_2,\bu_2)$ we obtain
\begin{align}
&H_t+\Div (D\bU)+\Div(H\bu_1+\eta_2\bU)-b\Div\{D^2\nabla H_t\}=0\ ,\label{eq:sys1}\\
&\bU_t+\nabla H+\tfrac{1}{2}\nabla[\bU\cdot (\bu_1+\bu_2)]-c\nabla\Div(D^2\nabla H)-b\nabla\Div (D^2\bU_t)=0\ .\label{eq:sys2}
\end{align}
Multiplying (\ref{eq:sys1}) by $H-c\Div\,\{D^2\nabla H\}$ and integrating over $\Omega$, we obtain
$$
\begin{aligned}
\tfrac{1}{2}\frac{d}{dt}\norma{H}_2^2&=\int_{\Omega} \bU\cdot D\nabla H +c\Div (D\bU)\Div\{D^2\nabla H\}-\Div(H\bu_1+\eta_2\bU)(H-c\Div\{D^2\nabla H\})~d\bx\\
&\lesssim \|\bU\|\|D\nabla H\|+\|\Div (D\bU)\|\norma{H}_2+(\|\Div(H\bu_1)\|+\|\Div(\eta_2\bU)\|)\norma{H}_2\\
&\lesssim \norma{\bU}_1\norma{H}_2+(\norma{H}_2+\norma{\bU}_1)\norma{H}_2\\
&\lesssim \norma{\bU}_1^2+\norma{H}_2^2\ ,
\end{aligned}$$
where we made use of Grisvard's lemma \cite{Grisvard,GR1986} and the bounds (\ref{eq:depthcondition}) of the bottom topography. Similarly, multiplying equation (\ref{eq:sys2}) by $D^2\bU$ and integrating, we obtain
$$\begin{aligned}
\tfrac{1}{2}\frac{d}{dt}\norma{\bU}_1^2
&=\int_\Omega-\nabla H\cdot (D^2 \bU)+\tfrac{1}{2}\bU\cdot(\bu_1+\bu_1)\Div(D^2\bU)-c\Div(D^2\nabla H)\Div(D^2\bU)~d\bx\\
&\lesssim \|H\|_1\|\bU\|+\|\bU\|\|D^2\bU\|+\|\Div(D^2\nabla H)\|\|\Div(D^2\bU)\|\\
&\lesssim \norma{H}_2\norma{\bU}_1+\norma{\bU}_1^2+\norma{H}_2\norma{\bU}_1\\
&\lesssim \norma{\bU}_1^2+\norma{H}_2^2\ .
\end{aligned}$$
Adding the last two inequalities, yields
$$
\frac{d}{dt}[\norma{H}_2^2+\norma{\bU}_1^2]\lesssim \norma{H}_2^2+\norma{\bU}_1^2\ .$$
Gronwall's inequality yields that $H\equiv 0$ and $\bU\equiv \bo$, which completes the proof.
\end{proof}

\subsection{Equivalence between velocity field and potential formulations}

Problems (\ref{eq:BS}) and (\ref{eq:BS2}) are practically equivalent. Suppose that we have a solution $(\eta,\bu=\nabla\phi)$ of problem (\ref{eq:BS}) and $\phi_0(\bx)=\phi(\bx,0)$. The velocity potential of problem (\ref{eq:BS}) is not unique, in the sense that any function $\phi(\bx,t)+C$ is also a velocity potential of $\bu$. However, fixing the constant $C$, the existence and uniqueness of solutions of (\ref{eq:BS2}) have been established in Theorem \ref{prop:existence}, and the maximal time $T>0$ of the solutions of (\ref{eq:BS}) it will coincide with that of (\ref{eq:BS2}).

Additionally, the solutions to problem (\ref{eq:BS2}) satisfy the energy conservation 
\begin{equation}\label{eq:energymod}
   \tfrac{1}{2}\frac{d}{dt} \int_\Omega [g\eta^2+(D+\eta)|\nabla\phi|^2+cgD^2|\nabla\eta|^2]~d\bx = 0\ .
\end{equation}
This conservation law coincides with the conservation of energy (\ref{eq:energy}) of the original Bona-Smith system (\ref{eq:BS}). To derive it from (\ref{eq:BS2}), we first express system (\ref{eq:BS2}) as:
\begin{equation}\label{eq:intermsys}
\eta_t+\Div R=0, \quad \phi_t+Q=0\ ,
\end{equation}
where $R=(D+\eta)\nabla\phi-bD^2\nabla\eta_t$ and $Q=g\eta+\frac{1}{2}|\nabla\phi|^2-cg\nabla \cdot (D^2\nabla\eta)-b\nabla \cdot (D^2\nabla\phi_t)$. Then, we multiply the first equation of (\ref{eq:intermsys}) with $Q$ and the second with $-\nabla \cdot R$. Equation (\ref{eq:energymod}) follows after integrating over the domain $\Omega$, applying Green's theorem appropriately and simplifying. 

These observations and the fact that the solution $(\eta, \phi)$ of the initial-boundary value problem (\ref{eq:BS2}) with initial data $\nabla \phi(\bx,0)=\bu(\bx,0)$ formulates the solution $(\eta, \bu=\nabla\phi)$ of the corresponding problem (\ref{eq:BS})--(\ref{eq:bcs}) suggests a numerical method that does not require the application of Nitsche's method \cite{Nitche} for approximating slip-wall boundary conditions \cite{KMS2020}. Specifically, we will consider a modified Galerkin / finite element method extending the method proposed in \cite{DMS2010} to preserve the mass, vorticity and total energy of problem (\ref{eq:BS})--(\ref{eq:bcs}) by solving the corresponding problem (\ref{eq:BS2}) with non-essential Neumann boundary conditions. Non-essential boundary conditions are not incorporated in the finite element spaces and are implicitly implied by the weak formulation of the problem \cite{gatica}.

It should be noted that the velocity potential formulation is applicable to other systems within the general class (\ref{eq:abcd})--(\ref{eq:coeffs}), including Peregrine's system. This approach has been previously applied in \cite{Wu1981,WWY1992} to systems similar to Peregrine's using finite difference methods. However, as previously mentioned, solutions to Peregrine's system do not conserve an energy functional, which gives the current study an advantage.

\begin{remark}\label{rmk:smoothness}
The analysis from the previous sections can be applied to the limiting case of the BBM-BBM system with $\theta^2=2/3$ and $c=0$. The primary difference is that, due to the absence of the third derivative term in the system, a unique solution $(\eta,\bu)\in H^1\times \bH^1$ is guaranteed, as demonstrated in \cite{IKKM2021}. It is worth mentioning that the increased regularity of the solutions of the Bona-Smith system with $c>0$ in domain with smooth boundary implies that the solution at the boundary will belong to $H^{1+\frac{1}{2}}(\partial\Omega)$, \cite{GR1986}. 
\end{remark}

\subsection{Line solitary waves}

Line solitary waves are traveling wave solutions that propagate in one direction, denoted by $\balpha$, along which they vary and exhibit symmetry, while remaining constant in the transverse direction $\balpha^\perp$. Here we consider only classical solitary waves that are decreasing monotonically to zero at infinity along the direction of motion. Typically, $\balpha$ represents the direction of a channel through which the solitary wave is propagating and in which it vanishes at large distances from the wave's peak. It is established that when the bottom is flat, with depth $D=D_0$, the Bona-Smith systems have possess solitary waves \cite{Chen1998,Chen2000} as traveling wave solutions. Specifically, contrary to the Euler equations, for any speed $c_s>\sqrt{gD_0}$, there exists a unique solitary wave traveling at that particular speed \cite{DM2008,DM2004}. Moreover, these waves have been experimentally tested and, contrary to the solitary waves of the Euler equations \cite{Tanaka1986,DC2014}, are reportedly stable under both small and large perturbations for all values of speed $c_s>\sqrt{gD_0}$, \cite{DDMM2007}. 

Assume that the domain $\Omega$ is the whole $\mathbb{R}^2$ or a very long channel in the direction $\balpha=(\alpha_x,\alpha_y)^T$ with $|\balpha|=1$ and that the bottom is flat with $D(\bx)=D_0$. Traveling wave solutions propagating along the direction of $\balpha$ with constant speed $c_s>\sqrt{gD_0}$ can be expressed as functions of the form
\begin{equation}\label{eq:ansatz}
\eta(\bx,t)=\eta(\bxi),\qquad \bu(\bx,t)=\bu(\bxi)\ ,
\end{equation}
where $\bxi=(\xi,\zeta)$ with $\xi=\balpha\cdot\bx-c_s t$ and $\zeta=\balpha^{\perp}\cdot\bx$. Here, $\balpha^\perp=(\alpha_y,-\alpha_x)^T$.  

Substituting the {\em ansatz} (\ref{eq:ansatz}) into the Bona-Smith system (\ref{eq:BS}), we obtain the partial differential equations
\begin{equation}\label{eq:petv1}
\begin{aligned}
& -c_s\eta_\xi +\nabla_{\xi\zeta}\cdot[(D_0+\eta)\bA\bu]+c_sbD_0^2\Delta_{\xi\zeta}\eta_\xi=0\ ,\\
& -c_s\bu_\xi+g\bA\nabla_{\xi\zeta}\eta+\tfrac{1}{2}\bA\nabla_{\xi\zeta}|\bu|^2-cgD_0^2\bA\nabla_{\xi\zeta}\Delta_{\xi\zeta}\eta+c_sbD_0^2\bA\nabla_{\xi\zeta}(\nabla_{\xi\zeta}\cdot \bA \bu_\xi)= 0\ ,
\end{aligned}
\end{equation}
where $$\bA=\begin{pmatrix}
    \alpha_x & \alpha_y\\
    \alpha_y & -\alpha_x
\end{pmatrix}\ ,$$
$\nabla_{\xi\zeta}=(\partial_\xi,\partial_\zeta)^T$ and $\Delta_{\xi\zeta}=\nabla_{\xi\zeta}\cdot\nabla_{\xi\zeta}$.
Multiplying the second equation of (\ref{eq:petv1}) by $\bA$, setting $\bw=(w,\tilde{w})^T=\bA\bu$, and using the fact $\bA^2=\bI$, then system (\ref{eq:petv1}) is simplified to
\begin{equation}\label{eq:petv2}
\begin{aligned}
& -c_s\eta_\xi +\nabla_{\xi\zeta}\cdot[(D_0+\eta)\bw]+c_sbD_0^2\Delta_{\xi\zeta}\eta_\xi=0\ ,\\
& -c_s\bw_\xi+g\nabla_{\xi\zeta}\eta+\tfrac{1}{2}\nabla_{\xi\zeta}|\bw|^2-cgD_0^2\nabla_{\xi\zeta}\Delta_{\xi\zeta}\eta+c_sbD_0^2\nabla_{\xi\zeta}(\nabla_{\xi\zeta}\cdot \bw_\xi)= 0\ .
\end{aligned}
\end{equation}
Since we are searching for line solitary waves, we assume that the solution $\bu$ is constant along the direction of $\balpha^{\perp}$. Thus, $\partial_\zeta \eta=\partial_\zeta w=\partial_\zeta\tilde{w}=0$. Integrating  equations (\ref{eq:petv2}) with respect to $\xi$ and assuming that $\lim_{|\xi|\to\infty}w=0$, yields the  differential equations
\begin{equation}\label{eq:odepet}
\begin{aligned}
-c_s\eta +(D_0+\eta)w+c_sbD_0^2\Delta_{\xi\zeta}\eta&=0\ ,\\
-c_s w+g\eta+\tfrac{1}{2}w^2-cgD_0^2\Delta_{\xi\zeta}\eta+c_sbD_0^2 w_{\xi\xi}&= 0\ ,\\
\tilde{w}&=0\ .
\end{aligned}
\end{equation}
This particular system has been analyzed in \cite{Chen1998,Chen2000} and has classical, line solitary waves as traveling wave solutions. In the case where $w=B\eta$, one can find for each value of $\theta^2\in (1/3,1)$ an explicit solution for a solitary wave \cite{Chen2000}, namely
\begin{equation}
\eta(\bxi)=A \sech^2\left(\lambda (\xi+\xi_0)\right), \quad w(\bxi)=B\eta(\bxi), \quad \tilde{w}(\bxi)=0\ ,
\end{equation}
where
\begin{equation}
A = 3\frac{g-D_0B^2}{B^2},\ \lambda=\tfrac{1}{2}\sqrt{\frac{2(g-D_0B^2)}{bD_0^2(2g-D_0B^2)}},\
 c_s=\frac{4 \sqrt{gD_0}\left(\theta^2-\frac{2}{3}\right)}{\sqrt{2\left(\theta^2-\frac{1}{3}\right)\left(1-\theta^2\right)}},\ B=\sqrt{\frac{2g}{D_0}\cdot\frac{1-\theta^2}{\theta^2-\frac{1}{3}}}\ .
\end{equation}
Returning to the original variables, for $1/3<\theta^2<1$, the solitary wave formulas become
\begin{equation}\label{eq:solitwave1}
\eta(\bxi)=A \sech^2\left(\lambda (\balpha\cdot\bx-c_s t)\right), \quad \bu(\bxi)=\balpha B\eta(\bxi)\ ,
\end{equation}
where
\begin{equation}\label{eq:solitwave2}
\begin{aligned}
& A = \frac{9D_0}{2}\cdot\frac{\theta^2-7/9}{1-\theta^2}, &\lambda=\frac{1}{2}\sqrt{\frac{3(\theta^2-7/9)}{D_0(\theta^2-2/3)(\theta^2-1/3)}},\\
& c_s=\frac{4 \sqrt{gD_0}\left(\theta^2-\frac{2}{3}\right)}{\sqrt{2\left(\theta^2-\frac{1}{3}\right)\left(1-\theta^2\right)}}, & B=\sqrt{\frac{2g}{D_0}\cdot\frac{1-\theta^2}{\theta^2-\frac{1}{3}}}\ .
\end{aligned}
\end{equation}

Note that these formulas correspond to a unique solitary wave for each $1/3<\theta^2<1$. Moreover, Bona-Smith systems have also cnoidal wave solutions as periodic traveling waves with similar formulas \cite{ADM2010ii}. These analytical solutions are useful for testing purposes. However, when other solitary waves are involved in applications, we resort to numerical methods for solving system (\ref{eq:odepet}). Such numerical methods can be Galerkin or pseudospectral methods accompanied by Newton's or Petviashvili's method. The Petviashvili method is a modified fixed-point method especially used for the computation of traveling waves \cite{Petv1976}. The Petviashvili method applied to the BBM-BBM system was presented in \cite{KMS2020}. In this work we extend this method for the general case of the Bona-Smith systems at hand and we present it in Section \ref{sec:modgal}. 
 
\subsection{Dispersion relation}\label{sec:disprel}

The Bona-Smith systems (\ref{eq:BS})--(\ref{eq:coeffs}) exhibit dispersion characteristics. For flat bottom topography $D=D_0$, the (scaled) speed of propagation of linear waves governed by the Bona-Smith systems, as a function of the wave number, is expressed through the dispersion relation \cite{IKM2023}:
\begin{equation}\label{eq:disprel}
\frac{c_s}{\sqrt{gD_0}}=\sqrt{\frac{1+c(D_0k)^2}{(1+b(D_0k)^2)^2}}\ ,
\end{equation}
whereas for the Euler equations, it is represented as:
\begin{equation}\label{eq:disprel2}
\frac{c_s}{\sqrt{gD_0}}=\sqrt{\frac{\tanh(D_0k)}{D_0k}}\ .
\end{equation}
\begin{figure}[ht!]
\centering
\includegraphics[width=\columnwidth]{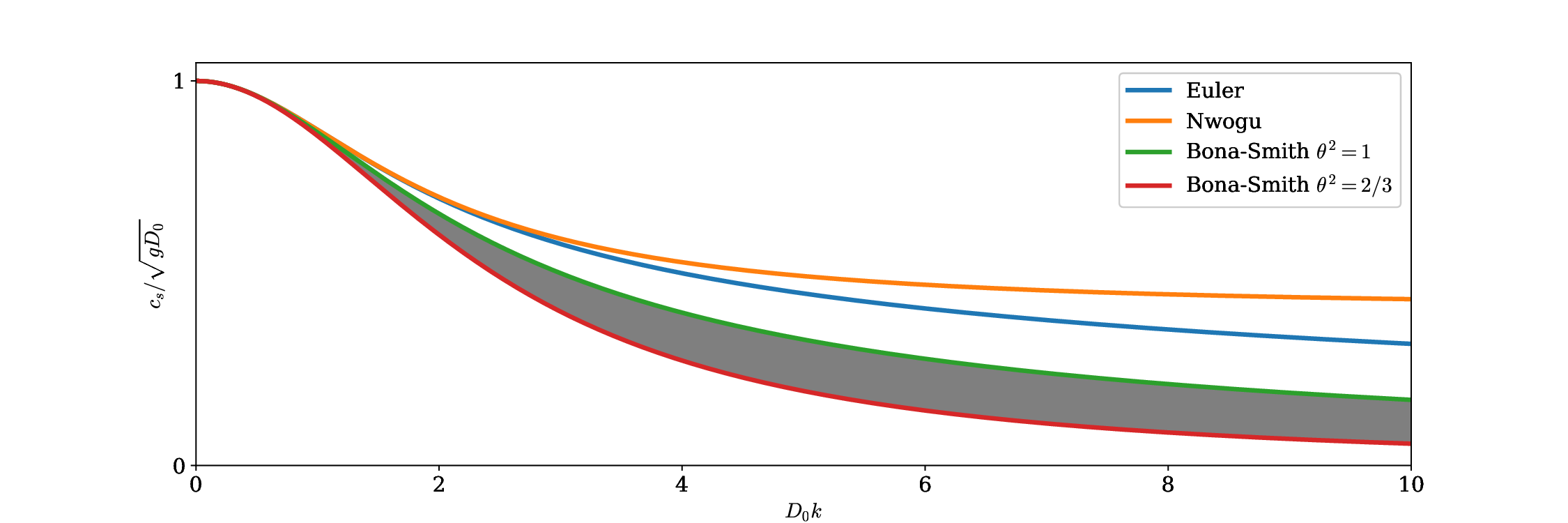}
\caption{Comparison of linear dispersion relations for Bona-Smith systems $2/3\leq \theta^2\leq 1$}
\label{fig:disprel}
\end{figure}

Figure \ref{fig:disprel} provides a graphical comparison between the dispersion relations of the various Bona-Smith systems and the corresponding relation of the Euler equations as well as Nwogu's system which is the optimal one. It is noteworthy that the dispersion relation closest to that of the Euler equations is exhibited by the Bona-Smith system with $\theta^2=1$, and this curve coincides with that of Peregrine's system \cite{P1967} and Serre-Green-Naghdi equations \cite{S1953I}. The Bona-Smith system with $\theta^2=2/3$ aligns with the BBM-BBM (regularized shallow water waves) system. The shaded area corresponds to the dispersion relations of the Bona-Smith systems for $\theta^2\in (2/3,1)$. Note that Nwogu's system has better dispersion characteristics and is optimal compared to other Boussinesq systems of the form (\ref{eq:abcd}).

Evidently, the linear dispersion relation of the Bona-Smith systems closely approximates the corresponding relation of the Euler equations in the long-wave limit. However, it is anticipated that short waves of the Bona-Smith system will deviate from those of the Euler equations. For such waves, Nwogu's system \cite{Nwogu93} emerges as the optimal $abcd$-Boussinesq system. Although it belongs to a distinct category, it remains unknown whether its solutions preserve any reasonable form of total energy.

\section{Energy-preserving numerical method}\label{sec:modgal}

Consider the initial-boundary value problem 
\begin{equation}\label{eq:BS22}
\begin{aligned}
&\begin{aligned}
&\eta_t+\Div [(D+\eta)\nabla\phi]-b\Div\left\{D^2\nabla\eta_t\right\}=0\ ,\\
&\phi_t+g\eta+\tfrac{1}{2}|\nabla\phi|^2-cg\Div(D^2\nabla\eta)-b\Div(D^2\nabla\phi_t)=0\ ,
\end{aligned}\quad \text{in $\Omega$}\ ,\\
&\eta(\bx,0)=\eta_0(\bx)\quad \text{and}\quad \phi(\bx,0)=\phi_0(\bx)\ ,\\
&\nabla\eta\cdot\bn=0\quad \text{and}\quad \nabla\phi\cdot\bn=0\quad \text{on $\partial\Omega$}\ .
\end{aligned}
\end{equation}
Let $\Omega$ be a polygonal domain and $b,d>0$, $c\geq 0$ constants such that the problem (\ref{eq:BS2}) has a unique smooth solution $(\eta,\bu)$. Upon obtaining an approximate solution $(\teta,\tphi)$ to (\ref{eq:BS22}), we establish the corresponding approximation of (\ref{eq:BS}) by defining $(\teta,\tbu)=(\teta,\nabla\tphi)$. In what follows we present a conservative numerical method for the problem (\ref{eq:BS22}).

\subsection{Energy-preserving modified Galerkin method}\label{sec:enepmg}

Assume that $\mathcal{T}_h$ is a regular triangulation of $\Omega$ with triangles $\tau$ of maximum edge $h$. We define the space of Lagrange finite elements  of degree $r$
$$\mathcal{P}_h^r=\{f\in C(\Omega)~ :~ f|_\tau \in \mathbb{P}_r(\tau) \text{ for all } \tau\in \mathcal{T}_h \}\ ,$$
where $\mathbb{P}_r$ denotes the space of polynomials of degree $r$. The space $\mathcal{P}^r_h$ is a subset of $C(\bar{\Omega})\cap H^1(\Omega)$ \cite{EG2010}. Furthermore, we denote by $P^r$ the $L^2$-projection of any function $f\in L^2(\Omega)$ onto the space $\mathcal{P}^r_h$ defined by
$$(P^r[f],\chi)=(f,\chi)\quad \text{ for all } \chi \in \mathcal{P}^r_h\ .$$

If $w,v\in H^1$, we define the modified discrete Laplacian operator $\Delta_h:H^1\to \mathcal{P}^r_h$ such that 
$$(\Delta_h [w;v],\chi)=-(w\nabla v,\nabla \chi)\qquad \text{ for all $\chi\in \mathcal{P}^r_h$}\ .$$

Then, we define the numerical solution of the problem (\ref{eq:BS22}) as $\teta,\tphi\in\mathcal{P}^r_h$ such that
\begin{equation}\label{eq:semid1}
\begin{aligned}
& (\teta_t,\chi)+(\Delta_h[D+\teta;\tphi],\chi)-b(\Delta_h[D^2; \teta_t], \chi)=0 & \text{for all $\chi\in \mathcal{P}^r_h$}\ ,\\
& (\tphi_t,\psi)+g(\teta,\psi)+\tfrac{1}{2}(P^r[|\nabla\tphi|^2],\psi)-cg(\Delta_h[D^2;\teta],\psi)-b(\Delta_h[D^2;\tphi_t],\psi)=0 & \text{for all $\psi\in\mathcal{P}^r_h$}\ ,
\end{aligned}
\end{equation}
along with the initial conditions
\begin{equation}\label{eq:incon}
\teta(\bx,0)=P^r[\eta_0(\bx)],\quad \tphi(\bx,0)=P^r[\phi_0(\bx)]\ .
\end{equation}
The semidiscretization (\ref{eq:semid1}) is an energy-conservative generalization of the modified Galerkin method introduced in \cite{DMS2010}. It also extends the method presented in \cite{MRKS2021} to one-dimensional systems with float bottom and with high-order derivatives, similar to those considered here.

We assume that the initial-value problem (\ref{eq:semid1}), (\ref{eq:incon}) has a unique solution in the interval $t\in[0,T]$ for some $T>0$. Then, the corresponding solution $(\teta,\tilde{\phi})$ of the modified Galerkin semidiscretization (\ref{eq:semid1})--(\ref{eq:incon}) preserves the mass 
$$\mathcal{M}(t;\teta)=\int_{\Omega}\teta(\bx,t)~dt\ ,$$
and the energy
$$\mathcal{E}(t;\teta,\tphi)=\tfrac{1}{2} \int_\Omega [g\teta^2+(D+\teta)|\nabla\tphi|^2+cgD^2|\nabla\teta|^2]~d\bx \ ,$$
in the sense $\mathcal{M}(t;\teta)=\mathcal{M}(0;\teta)$, $\mathcal{E}(t;\teta,\tphi)=\mathcal{E}(0;\teta,\tphi)$. Specifically, we have the following:

\begin{proposition}
The solution $(\teta,\tilde{\phi})$ of the initial-value problem (\ref{eq:semid1}), (\ref{eq:incon}) satisfies the following conservation laws for all $t\in [0,T]$:
\begin{enumerate}
\item[(i)] Conservation of mass:
$$\frac{d}{dt}\mathcal{M}(t;\teta)=0\ .$$
\item[(ii)] Conservation of energy:
$$\frac{d}{dt}\mathcal{E}(t;\teta,\tphi)=0\ .$$
\end{enumerate}
\end{proposition}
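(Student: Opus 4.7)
For mass conservation, the plan is to exploit the fact that the constant function $1$ belongs to $\mathcal{P}_h^r$. Testing the first equation of (\ref{eq:semid1}) with $\chi = 1$, both discrete Laplacian terms vanish because $(\Delta_h[w;v],1) = -(w\nabla v, \nabla 1) = 0$ by definition, leaving $(\teta_t,1) = 0$. Since $\int_\Omega \teta\,d\bx = (\teta,1)$, this gives $\tfrac{d}{dt}\mathcal{M}(t;\teta) = 0$ immediately.

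For energy conservation, the plan is to differentiate $\mathcal{E}(t;\teta,\tphi)$ in time and then use the semidiscrete equations as identities with carefully chosen test functions. Differentiating under the integral,
\begin{equation*}
\frac{d}{dt}\mathcal{E} = g(\teta,\teta_t) + \tfrac{1}{2}\!\int_\Omega \teta_t|\nabla\tphi|^2\,d\bx + ((D+\teta)\nabla\tphi,\nabla\tphi_t) + cg(D^2\nabla\teta,\nabla\teta_t).
\end{equation*}
Since $\teta_t, \tphi_t \in \mathcal{P}_h^r$, I can rewrite the inner products involving $\nabla$'s as evaluations of $\Delta_h$: the third term becomes $-(\Delta_h[D+\teta;\tphi],\tphi_t)$, the fourth becomes $-cg(\Delta_h[D^2;\teta],\teta_t)$, and the quadratic term $\tfrac12(|\nabla\tphi|^2,\teta_t)$ equals $\tfrac12(P^r[|\nabla\tphi|^2],\teta_t)$ by the defining property of the $L^2$-projection onto $\mathcal{P}_h^r$.

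Next I would test the first equation of (\ref{eq:semid1}) with $\chi = \tphi_t$ and the second with $\psi = \teta_t$, both admissible since they lie in $\mathcal{P}_h^r$. Substituting these into the expression for $d\mathcal{E}/dt$, the terms $g(\teta,\teta_t)$, $\tfrac{1}{2}(P^r[|\nabla\tphi|^2],\teta_t)$, $(\Delta_h[D+\teta;\tphi],\tphi_t)$ and $cg(\Delta_h[D^2;\teta],\teta_t)$ will cancel pairwise with terms produced by the equations, leaving only
\begin{equation*}
\frac{d}{dt}\mathcal{E} = b\bigl[(\Delta_h[D^2;\tphi_t],\teta_t) - (\Delta_h[D^2;\teta_t],\tphi_t)\bigr].
\end{equation*}

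The key observation, and the one step that requires care, is that this remaining bracket vanishes by symmetry of the bilinear form $(D^2\nabla\cdot,\nabla\cdot)$: because both $\teta_t$ and $\tphi_t$ lie in $\mathcal{P}_h^r$, the defining identity of $\Delta_h$ gives $(\Delta_h[D^2;\tphi_t],\teta_t) = -(D^2\nabla\tphi_t,\nabla\teta_t) = (\Delta_h[D^2;\teta_t],\tphi_t)$. This yields $d\mathcal{E}/dt = 0$, completing the proof. The main obstacle is bookkeeping, namely making sure every test function chosen actually lies in the discrete space so that the $\Delta_h$ and $P^r$ identities apply verbatim; with that regularity in hand the cancellation is forced by the skew structure built into the mixed formulation.
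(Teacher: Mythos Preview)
Your proof is correct and essentially the same as the paper's. The only cosmetic difference is organization: the paper packages the right-hand sides as $\tilde R=\Delta_h[D+\teta;\tphi]-b\Delta_h[D^2;\teta_t]$ and $\tilde Q=g\teta+\tfrac12 P^r[|\nabla\tphi|^2]-cg\Delta_h[D^2;\teta]-b\Delta_h[D^2;\tphi_t]$, tests with $\chi=\tilde Q$ and $\psi=\tilde R$, and shows $(\teta_t,\tilde Q)-(\tphi_t,\tilde R)=\tfrac{d}{dt}\mathcal{E}$; since $\teta_t=-\tilde R$ and $\tphi_t=-\tilde Q$ in $\mathcal{P}_h^r$, your choice of test functions $\chi=\tphi_t$, $\psi=\teta_t$ is the same up to sign, and the key cancellation $(\Delta_h[D^2;\tphi_t],\teta_t)=(\Delta_h[D^2;\teta_t],\tphi_t)$ is identical in both arguments.
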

\begin{proof}
The mass conservation follows by taking $\chi=1$ in the first equation of (\ref{eq:semid1}). To prove (ii), we define
$$\tilde{R}=\Delta_h[D+\teta;\tphi]-b\Delta_h[D^2; \teta_t]\in \mathcal{P}^r_h \quad \text{ and } \quad
\tilde{Q}=g\teta+\tfrac{1}{2}P^r[|\nabla\phi|^2]-cg\Delta_h[D^2;\teta]-b\Delta_h[D^2;\tphi_t]\in \mathcal{P}^r_h\ .$$
Then, the system (\ref{eq:semid1}) can be expressed as 

\begin{eqnarray*}
(\teta_t,\chi)+(\tilde{R}, \chi)=0 & \text{ for all $\chi\in \mathcal{P}^r_h$}\ ,\\
(\tphi_t,\psi)+(\tilde{Q}, \psi)=0 & \text{ for all $\psi\in\mathcal{P}^r_h$}\ .
\end{eqnarray*}

By taking $\chi=\tilde{Q}$ and $\psi=\tilde{R}$ and subtracting the last two equations, yields $(\teta_t,\tilde{Q})-(\tphi_t,\tilde{R})=0$. On the other hand, we have
$$
\begin{aligned}
(\teta_t,\tilde{Q})-(\tphi_t,\tilde{R})&=g(\teta_t,\teta)+\tfrac{1}{2}(\teta_t,P^r[|\nabla\phi|^2])-cg(\teta_t,\Delta_h[D^2;\teta])-b(\teta_t,\Delta_h[D^2;\tphi_t])\\
&\qquad -(\tphi_t,\Delta_h[D+\teta;\tphi])+b(\tphi_t,\Delta_h[D^2; \teta_t])\\
&=g(\teta_t,\teta)+\tfrac{1}{2}(\teta_t,|\nabla\phi|^2)+cg(\nabla\teta_t,D^2\nabla\teta)+b(\nabla\teta_t,D^2\nabla\tphi_t)\\
&\qquad +(\nabla\tphi_t,(D+\teta)\nabla\tphi])-b(\nabla\tphi_t,D^2 \nabla\teta_t)\\
&=\tfrac{1}{2}\frac{d}{dt}\int_\Omega [g\teta^2+cgD^2|\nabla\teta|^2+(D+\teta)|\nabla\tphi|^2~d\bx \ .
\end{aligned}
$$
Thus, $\frac{d}{dt}\mathcal{E}(t;\teta,\tphi)=0$.
\end{proof}

By introducing auxiliary unknown functions $\tv,\tw,\tz,\tf:[0,t]\to \mathcal{P}^r $ for the $L^2$-projection and the various discrete Laplacians appearing in the semidiscretization (\ref{eq:semid1}), we can express system (\ref{eq:semid1}) in the following mixed formulation:
\begin{equation}\label{eq:mixedform}
\begin{aligned}
(\teta_t,\chi)-b(\tv_t,\chi)&=(\tw,\chi) & \text{for all $\chi\in \mathcal{P}^r_h$}\ ,\\
(D^2\nabla\teta_t,\nabla\omega)+(\tv_t,\omega) &=0 & \text{for all $\omega\in \mathcal{P}^r_h$}\ ,\\
(\tw,\zeta)&=((D+\teta)\nabla\tphi,\nabla\zeta) & \text{for all $\zeta\in \mathcal{P}^r_h$}\ ,\\
(\tphi_t,\psi)-b(\tz_t,\psi)&=cg(\tv,\psi)-g(\teta,\psi)-\tfrac{1}{2}(\tf,\psi)  & \text{for all $\psi\in \mathcal{P}^r_h$}\ ,\\
(D^2\nabla\tphi_t,\nabla\xi)+(\tz_t,\xi)&=0  & \text{for all $\xi\in \mathcal{P}^r_h$}\ ,\\
(\tf,\theta)&=(|\nabla\tphi|^2,\theta)  & \text{for all $\theta\in \mathcal{P}^r_h$}\ ,\\
\end{aligned}
\end{equation}
along with the initial conditions:
\begin{equation}\label{eq:initcondmix}
\begin{aligned}
&\teta(\bx,0)=P^r[\eta_0(\bx)],\qquad \tv(\bx,0)=\Delta_h[D^2;\eta_0](\bx) \ ,\\
&\tphi(\bx,0)=P^r[\phi_0(\bx)],\qquad \tz(\bz,0)=\Delta_h[D^2;\phi_0](\bx) \ .
\end{aligned}
\end{equation}
This mixed formulation is convenient for implementation purposes.

It is worth noting that since the numerical velocity field $\tbu$ is produced by the potential $\tphi$ as $\tbu=\nabla\tphi$, it will satisfy
$\Curl \tilde{\bu} =\Curl \nabla\tphi =0$ for all $\bx\in\Omega$ and $t>0$. Therefore, in addition to mass and energy, the proposed numerical method guarantees the conservation of vorticity as well.

\subsection{Time discretization}

For the time discretization of the autonomous system of ordinary differential equations (\ref{eq:mixedform}), we can employ an explicit relaxation Runge-Kutta method \cite{KE2019,RK2020}. As we shall see in Section \ref{sec:expval} experimentally, the system (\ref{eq:mixedform}) appears to be non-stiff, and explicit Runge-Kutta methods should be efficient. It is noteworthy that for Bona-Smith type systems containing similar regularization operators (BBM-terms), it was proven that there is no timestep restriction requirement for the standard Galerkin semidiscretization \cite{DMS2010}. However, for the current, conservative Galerkin semidiscretization further analysis is required to study the effects of the regularization terms, and for this reason we will use sufficiently small timestep $\Delta t<\Delta x$.

Consider the initial-value problem
\begin{equation}\label{eq:ode2}
\begin{aligned}
    \frac{d}{dt}\by(t)&=\btf(t,\by(t)),\quad t\in [0,T],\\
    \by(0)&=\by_0\ ,
\end{aligned}
\end{equation}
where $\by(t)=[\by_i(t)]$ is the vector of unknown functions. Let $\Delta t$ be a fixed timestep such that $0<\Delta t<1$. We define $J=\{t_0,t_1,\dots, t_K\}$ a uniform grid of the interval $[0,T]$ such that $t_{i+1}=t_i+\Delta t$ with $t_0=0$ and $t_K=T$. An explicit Runge-Kutta method with $s$ intermediate stages can be fully described by its Butcher tableau
\begin{align}\label{eq:butcher}
\begin{array}{c|c}
c & A\\
\hline
& b^\mathrm{T}
\end{array},
\end{align}
where $A=[a_{ij}]_{i,j=1}^s$ is an $s\times s$ lower-triangular matrix with zeros
in the main diagonal, and $b=[b_j]_{j=1}^s$ and $c=[c_j]_{j=1}^s$ are
$s$-dimensional vectors. If $\bgamma^n=\{\gamma^1,\gamma^2,\dots,\gamma^K\}$ is a vector containing the so-called relaxation parameters $\gamma^n\in (0,1]$, and $\by^n$ is the approximation of $\by(t_{n-1}+\gamma^n\Delta t)$, then a step of an explicit relaxation Runge-Kutta method corresponding to the Butcher tableau (\ref{eq:butcher})
can be expressed as
\begin{align}
\tilde{\by}^i&=\by^n+\Delta t\sum_{j=1}^{i-1} a_{ij}\,\btf(t_n+c_j\Delta t, \tilde{\by}^j), \quad i=1,2,\ldots,s, \label{eq:stages}\\
\by^{n+1}&=\by^n+\gamma^n\Delta t\sum_{i=1}^s b_i \,\btf(t_n+c_i\Delta t,\tilde{\by}^i). \label{eq:updates}
\end{align}
The parameters $\gamma^n$ are typically close to $1$ and can be determined such as the energy of the system is preserved. For example, if the energy of the system (\ref{eq:ode2}) is $\mathcal{E}(t;\by)=\mathcal{E}(0;\by)$, then we can choose the parameter $\gamma^n$ such that 
$$\mathcal{E}(t_{n+1}; \by^{n+1})=\mathcal{E}(t_n;\by^n)\ .$$

For simplicity, we will denote the sum in the right-hand side of (\ref{eq:updates}) by
$$\bd^n=\sum_{i=1}^s b_i \,\btf(t_n+c_i\Delta t,\tilde{\by}^i)\ .$$

In our case, we define the vector $\by^n=(\mathcal{H}^n,\mathcal{W}^n,\Phi^n,\mathcal{Z}^n)^T$, where $\mathcal{H}^n, \mathcal{W}^n, \Phi^n, \mathcal{Z}^n\in \mathcal{P}^r_h$ are the fully discrete approximations of $\tilde{\eta}(\cdot,t_n)$, $\tilde{w}(\cdot,t_n)$, $\tilde{\phi}(\cdot,t_n)$ and $\tilde{z}(\cdot,t_n)$, respectively.  Then, the updates for $\mathcal{H}^{n}$ and $\Phi^n$ of (\ref{eq:updates}) can be expressed in the form
\begin{equation}\label{eq:updates2}
\begin{aligned}
\mathcal{H}^{n+1}&=\mathcal{H}^n+\gamma^n\Delta t d_\eta^n, \\
\Phi^{n+1}&=\Phi^n+\gamma^n\Delta t d_\phi^n,
\end{aligned}
\qquad\text{for $n=0,1,2,\dots$}\ ,
\end{equation}
where $d_\eta^n$ and $d_\phi^n$ are the components of the vector $\bd^n$ corresponding to the solution $\mathcal{H}^n$ and $\Phi^n$, respectively. To find the relaxation parameter such as the energy between times $t_n$ and $t_{n+1}$ remain constant, we need to solve the equation
\begin{equation}\label{eq:discenergy}
\mathcal{E}(t_{n+1};\mathcal{H}^{n+1},\Phi^{n+1})=\mathcal{E}(t_n,\mathcal{H}^n,\Phi^n)\ .
\end{equation}
Substituting the updates (\ref{eq:updates2}) into equation (\ref{eq:discenergy}), yields the cubic equation
\begin{equation}\label{eq:cubiceq}
\Alpha x^3+\Beta x^2+\Gamma x=0\ ,
\end{equation}
where $x=\gamma^n\Delta t$ and 
\begin{equation}
\begin{aligned}
\Alpha &= \int_\Omega d_\eta|\nabla d_\phi|^2~d\bx\ ,\\
\Beta &= \int_\Omega gd_\eta^2+cgD^2|\nabla d_\eta|^2+(D+\mathcal{H}^n)|\nabla d_\phi|^2+2d_\eta \nabla\Phi^n\cdot \nabla d_\phi~d\bx\ ,\\
\Gamma &= \int_\Omega (2 g \mathcal{H}^n+|\nabla\Phi^n|^2) d_\eta+2cgD^2\nabla \mathcal{H}^n\cdot \nabla d_\eta+2(D+\mathcal{H}^n)\nabla\Phi^n\cdot\nabla d_\phi~d\bx\ .
\end{aligned}
\end{equation}
Therefore, there are three roots, namely $\gamma^n_0=0$ and
\begin{equation}
\gamma^n_{\pm}\cdot \Delta t=\frac{-\Beta\pm\sqrt{\Beta^2-4\Alpha\Gamma}}{2\Alpha}\ .
\end{equation}
The parameter $\gamma^n$ depends on the current solution of the problem making the determination of its consistency challenging. For this reason, we cannot study the roots $\gamma^n$ further. However, whenever the explicit Runge-Kutta method converges, let's say at order $\Delta t^p$, it is expected that there is a root $\gamma^n\approx 1$ such that $|\gamma^n-1|=O(\Delta t^{p-1})$ \cite{KE2019}. In practice, after choosing $\Delta t\ll 1$ and excluding the solution $\gamma^n_0$, we solve numerically  the resulting quadratic equation (\ref{eq:cubiceq}) using, for instance, the secant method to avoid catastrophic cancellation phenomena.

For the numerical experiments of this work, we will use the relaxation Runge-Kutta method
related to the classical four-stage, fourth-order Runge-Kutta method given by
the Butcher tableau:
\begin{align}\label{eq:RK4}
\begin{array}{c|c}
c & A\\
\hline
& b^\mathrm{T}
\end{array} ~=~ \begin{array}{c|cccc}
0 &  0 & 0 & 0  & 0  \\
1/2 &  1/2 & 0  & 0 & 0 \\
1/2 & 0 & 1/2 & 0 & 0 \\
1 & 0 & 0 & 1 & 0 \\
\hline
& 1/6 & 1/3 & 1/3 & 1/6
\end{array}.
\end{align}
Note that this numerical method has been successfully applied to the one-dimensional BBM-BBM system, as demonstrated in \cite{MRKS2021, LR2024, RMK}, in conjunction with conservative spatial semidiscretizations. However, this is the first time we are applying the method to two-dimensional Boussinesq systems.

\subsection{Computation of initial velocity potential}

Our objective is to generate approximations to solutions of system (\ref{eq:BS}) with given initial conditions $\eta_0(\bx)$ and $\bu_0(\bx)$ in $\Omega$ using the solutions of (\ref{eq:BS2}). However, system (\ref{eq:BS2}) requires an initial velocity potential $\phi_0$. Determining the initial velocity potential $\phi_0$ from the initial velocity field $\bu_0$ presents a challenge. Initially, we establish an initial velocity potential profile $\phi(\bx,0)=\phi_0(\bx)$ by solving the Poisson equation
$$\Delta \varphi_0=\Div \bu_0\quad\text{in $\Omega$}\ ,$$
with boundary condition $\nabla\varphi_0\cdot\bn=0$ on $\partial\Omega$. We use the letter $\varphi_0$ instead of $\phi_0$ for the solution of the Laplace equation because this problem has a unique solution subject to vertical translations. To demonstrate this, let's consider two solutions $\varphi_0^1$ and $\varphi_0^2$ and define $\psi = \varphi_0^1 - \varphi_0^2$. This function satisfies the homogeneous Laplace equation $\Delta \psi = 0$ in $\Omega$ with boundary condition $\nabla \psi \cdot \mathbf{n} = 0$ on $\partial\Omega$. Multiplying the Laplace equation by $\psi$ and integrating over $\Omega$, we obtain
$$\int_{\Omega}|\nabla\psi|^2~d\bx=0\ .$$ 
Hence, $\nabla \psi = 0$, which is equivalent to $\varphi_0^1 = \varphi_0^2 + C$ for any constant $C$. Nevertheless, this doesn't impose a restriction, as the solutions to the problem (\ref{eq:BS})--(\ref{eq:ics}) are unaffected by vertical translations of the velocity potential of (\ref{eq:BS2}) since $\bu=\nabla\phi$.

We proceed to solve for the initial potential $\varphi_0$ numerically using the standard Galerkin method with the triangular grid $\mathcal{T}_h$ defined in Section \ref{sec:enepmg} for the solution of (\ref{eq:BS2}). We obtain an approximation $\tilde{\varphi}_0 \in \mathcal{P}^r_h$. Conventionally, we select the initial velocity potential function as $\tilde{\phi}_0 = \tilde{\varphi}_0 - \min_{\Omega} \tilde{\varphi}_0$.

\subsection{Generation of solitary waves using the Petviashvili method}\label{sec:petviashvili}

To generate numerical solitary waves that travel with constant speed $c_s$, we employ the Petviashvili method, which is a modified fixed-point iteration applied to the time-independent equations (\ref{eq:odepet}). Specifically, to apply the Petviashvili method, we write equations (\ref{eq:odepet}) in the form
\begin{equation}\label{eq:peta1} 
\mathcal{L}\bw=\mathcal{N}(\bw)\ ,
\end{equation}
where $\bw=(\eta,w,\tw)$ and
\begin{equation}\label{eq:peta2}
\mathcal{L}=\begin{pmatrix}
c_s-b c_s D_0^2\Delta_{\xi\zeta} & -D_0 & 0\\
-g+cgD_0^2\Delta_{\xi\zeta} & c_s-bc_sD_0^2\partial_\xi^2 & 0\\
0 & 0 & 1
\end{pmatrix} \quad \text{and}\quad \mathcal{N}(\bw)=\begin{pmatrix}
\eta w\\
\tfrac{1}{2}w^2\\
0
\end{pmatrix}\ .
\end{equation}
The Galerkin finite element method for solving system (\ref{eq:peta1})--(\ref{eq:peta2}) is the following: seek for approximation $\bw_h=(\eta_h,w_h,\tw_h)^T\in (\mathcal{P}_h^r)^3$ such that
\begin{equation}\label{eq:petfem1}
\mathcal{L}_h(\bw_h,\bchi)=(\mathcal{N}(\bw_h),\bchi),\quad \text{for all}\quad \bchi\in (\mathcal{P}_h^r)^3\ ,
\end{equation}
where $\mathcal{L}_h$ is the bilinear form
\begin{equation}\label{eq:petfem2}
\begin{aligned}
\mathcal{L}_h(\bw,\bchi)= & c_s(\eta,\phi)+bc_s D_0^2(\nabla_{\xi\zeta}\eta,\nabla_{\xi\zeta}\phi)-D_0(w,\phi)\\
&+c_s(w,\chi)+b c_s D_0^2(w_\xi,\chi_\xi)-g(\eta,\chi)-cgD_0^2(\nabla_{\xi\zeta}\eta,\nabla_{\xi\zeta}\chi)+(\tw,\psi)\ ,
\end{aligned}
\end{equation}
for all $\bw=(\eta,w,\tw)^T\in (\mathcal{P}_h^r)^3$ and $\bchi=(\phi,\chi,\psi)^T\in (\mathcal{P}_h^r)^3$. Given an initial guess $\bw_h^0$ for the solitary wave solution, the Petviashvili method for solving the nonlinear system of equations (\ref{eq:petfem1})--(\ref{eq:petfem2}) is the fixed-point iteration
\begin{equation}\label{eq:npetfem3}
\mathcal{L}_h(\bw_h^{(n+1)},\bchi)=m_n^p(\mathcal{N}(\bw_h^n),\bchi),\quad n=0,1,\dots\ ,
\end{equation}
for all $\bchi\in (\mathcal{P}_h^r)^3$, $p$ appropriate constant, and $m_n$ the scalar
$$m_n=\frac{\mathcal{L}_h(\bw_h^n,\bw_h^n)}{(\mathcal{N}(\bw_h^n),\bw_h^n)}\ .$$
The initial guess $\bw_h^0$ can be taken to be the $L^2$-projection of the corresponding solitary wave of the Serre equations \cite{MID2014} for the same speed $c_s$. 

Typically, the Petviashvili iteration will be terminated when the normalized residual is less than a prescribed tolerance $\delta$. Specifically, we will consider
$$|\mathcal{L}_h(\bw_h^n,\bw_h^n)-(\mathcal{N}(\bw_h^n),\bw_h^n)|/\|\bw_h^n\|_2<\delta\ ,$$
where $\delta$ is the desired tolerance. In the experiments of this paper, we took $\delta=10^{-6}$, and the Petviashvili iteration with $p=2$ required less than 10 iterations to converge within the prescribed tolerance.

\section{Experimental validation}\label{sec:expval}

In this section, we present several numerical experiments to provide experimental justification for both the mathematical model and the numerical method. For the numerical implementation of the fully-discrete scheme, we employed the Python library Fenics \cite{fenics2012}. Fenics uses appropriate quadrature methods to evaluate integrals of polynomials exactly (within machine precision) as they appear in (\ref{eq:mixedform}). Unless otherwise specified, all experiments in this study employ the Bona-Smith system with $\theta^2=1$. It is important to note that similar results can be obtained for any value of $\theta$ in the range $[2/3, 1]$. Similar results for $\theta^2 = 2/3$ with non-conservative numerical method have been presented in \cite{KMS2020, IKKM2021}.
 
\subsection{Experimental convergence rates}\label{sec:expconv}

To study the convergence of the conservative numerical method, we consider the initial-boundary value problem (\ref{eq:BS2}) with bottom topography $D(x,y)=-(x+y)/20+3/2$, $g=1$ and $\Omega=[0,1]\times [0,1]$. Moreover, we consider the functions
$$
\eta(\bx,t)=e^t\cos(2\pi x)\cos(2\pi y), \quad \phi(\bx,t)=e^t\cos(\pi x)\cos(\pi y), \quad \bx=(x,y)^T\in\Omega\quad \text{and}\quad t\geq 0\ ,
$$
as exact solution to the corresponding initial-boundary value problem (\ref{eq:BS2}), with appropriate forcing terms on the right-hand side of the system. Although these terms were computed, they are omitted from this text due to their complex form. Then, we solve the system (\ref{eq:mixedform}) for various uniform triangulations and we monitor the $L^2$ and $H^1$ norms of the errors between the numerical and the exact solutions in the spaces $\mathcal{P}^r_h$ for $r=1,2,3,4$. Assuming that the method converges, we define 
$E_i[h,V]=\|\tilde{V}-V\|_i$ for $i=0,1$, where $V$ can be any of $\eta$ or $\phi$. If $h_1$, $h_2$ correspond to the maximum edges of a triangle of two different uniform triangulations with right-angle triangles of $\Omega$, then the experimental order of convergence is defined as
$$r=\log \frac{E_i[h_1,V]}{E_i[h_2,V]}/\log \frac{h_1}{h_2}\ .$$
Because the focus was on the Galerkin semi-discretization, we used small $\Delta t=5\times 10^{-4}$ for all finite element spaces and grids to eliminate the influence of the time discretization to the error. For the computations of the norms, we utilized the corresponding Fenics functions \cite{fenics2012}.

\begin{table}[t]
{\small
\begin{tabular}{ccccccccc}
\hline
 $h$  & $E_0[h,\phi]$ & $r$ & $E_0[h,\eta]$ & $r$ & $E_1[h,\phi]$ & $r$ & $E_1[h,\eta]$ & $r$ \\
 \hline
 $ 1.250\times 10^{-1} $ & $ 4.844\times 10^{-2} $ &  --       & $ 5.364\times 10^{-2} $ &  --       & $ 6.272\times 10^{-1} $ &  --       & $ 2.515\times 10^{0} $ &  --       \\
 $ 8.333\times 10^{-2} $ & $ 2.127\times 10^{-2} $ & $ 2.030 $ & $ 2.356\times 10^{-2} $ & $ 2.029 $ & $ 4.173\times 10^{-1} $ & $ 1.005 $ & $ 1.673\times 10^{ 0} $ & $ 1.006 $ \\
 $ 6.250\times 10^{-2} $ & $ 1.192\times 10^{-2} $ & $ 2.014 $ & $ 1.320\times 10^{-2} $ & $ 2.014 $ & $ 3.128\times 10^{-1} $ & $ 1.002 $ & $ 1.254\times 10^{ 0} $ & $ 1.003 $ \\
 $ 5.000\times 10^{-2} $ & $ 7.613\times 10^{-3} $ & $ 2.008 $ & $ 8.431\times 10^{-3} $ & $ 2.008 $ & $ 2.501\times 10^{-1} $ & $ 1.001 $ & $ 1.003\times 10^{ 0} $ & $ 1.002 $ \\
 $ 4.167\times 10^{-2} $ & $ 5.281\times 10^{-3} $ & $ 2.005 $ & $ 5.849\times 10^{-3} $ & $ 2.006 $ & $ 2.084\times 10^{-1} $ & $ 1.001 $ & $ 8.356\times 10^{-1} $ & $ 1.001 $ \\
 $ 3.571\times 10^{-2} $ & $ 3.878\times 10^{-3} $ & $ 2.004 $ & $ 4.295\times 10^{-3} $ & $ 2.004 $ & $ 1.786\times 10^{-1} $ & $ 1.001 $ & $ 7.162\times 10^{-1} $ & $ 1.001 $ \\
 $ 3.125\times 10^{-2} $ & $ 2.968\times 10^{-3} $ & $ 2.003 $ & $ 3.287\times 10^{-3} $ & $ 2.003 $ & $ 1.563\times 10^{-1} $ & $ 1.000 $ & $ 6.266\times 10^{-1} $ & $ 1.001 $ \\
 \hline
\end{tabular}
}
\caption{Experimental convergence rate $r$ in $L^2$ and $H^1$ norms with $\mathcal{P}^1_h$ elements}\label{tab:eoc1}
\end{table}

\begin{table}[t]
{\small
\begin{tabular}{ccccccccc}
\hline
 $h$  & $E_0[h,\phi]$ & $r$ & $E_0[h,\eta]$ & $r$ & $E_1[h,\phi]$ & $r$ & $E_1[h,\eta]$ & $r$ \\
 \hline
 $ 1.250\times 10^{-1} $ & $ 4.718\times 10^{-4} $ &  --       & $ 3.329\times 10^{-3} $ &  --       & $ 3.191\times 10^{-2} $ &  --       & $ 2.541\times 10^{-1} $ &  --       \\
 $ 8.333\times 10^{-2} $ & $ 1.365\times 10^{-4} $ & $ 3.059 $ & $ 1.027\times 10^{-3} $ & $ 2.901 $ & $ 1.421\times 10^{-2} $ & $ 1.996 $ & $ 1.136\times 10^{-1} $ & $ 1.986 $ \\
 $ 6.250\times 10^{-2} $ & $ 5.705\times 10^{-5} $ & $ 3.032 $ & $ 4.398\times 10^{-4} $ & $ 2.947 $ & $ 7.996\times 10^{-3} $ & $ 1.998 $ & $ 6.397\times 10^{-2} $ & $ 1.995 $ \\
 $ 5.000\times 10^{-2} $ & $ 2.908\times 10^{-5} $ & $ 3.020 $ & $ 2.268\times 10^{-4} $ & $ 2.967 $ & $ 5.119\times 10^{-3} $ & $ 1.999 $ & $ 4.097\times 10^{-2} $ & $ 1.998 $ \\
 $ 4.167\times 10^{-2} $ & $ 1.679\times 10^{-5} $ & $ 3.014 $ & $ 1.318\times 10^{-4} $ & $ 2.978 $ & $ 3.555\times 10^{-3} $ & $ 1.999 $ & $ 2.846\times 10^{-2} $ & $ 1.999 $ \\
 $ 3.571\times 10^{-2} $ & $ 1.055\times 10^{-5} $ & $ 3.010 $ & $ 8.320\times 10^{-5} $ & $ 2.984 $ & $ 2.612\times 10^{-3} $ & $ 2.000 $ & $ 2.091\times 10^{-2} $ & $ 1.999 $ \\
 $ 3.125\times 10^{-2} $ & $ 7.064\times 10^{-6} $ & $ 3.008 $ & $ 5.583\times 10^{-5} $ & $ 2.988 $ & $ 2.000\times 10^{-3} $ & $ 2.000 $ & $ 1.601\times 10^{-2} $ & $ 1.999 $ \\
 \hline
\end{tabular}
}
\caption{Experimental convergence rate $r$ in $L^2$ and $H^1$ norms with $\mathcal{P}^2_h$ elements}\label{tab:eoc2}
\end{table}

\begin{table}[t]
{\small
\begin{tabular}{ccccccccc}
\hline
 $h$  & $E_0[h,\phi]$ & $r$ & $E_0[h,\eta]$ & $r$ & $E_1[h,\phi]$ & $r$ & $E_1[h,\eta]$ & $r$ \\
 \hline
 $ 1.250\times 10^{-1} $ & $ 8.856\times 10^{-6} $ &  --       & $ 1.158\times 10^{-4} $ &  --       & $ 9.763\times 10^{-4} $ &  --       & $ 1.602\times 10^{-2} $ &  --       \\
 $ 8.333\times 10^{-2} $ & $ 1.735\times 10^{-6} $ & $ 4.020 $ & $ 2.255\times 10^{-5} $ & $ 4.034 $ & $ 2.891\times 10^{-4} $ & $ 3.001 $ & $ 4.746\times 10^{-3} $ & $ 3.000 $ \\
 $ 6.250\times 10^{-2} $ & $ 5.475\times 10^{-7} $ & $ 4.009 $ & $ 7.101\times 10^{-6} $ & $ 4.017 $ & $ 1.220\times 10^{-4} $ & $ 3.001 $ & $ 2.002\times 10^{-3} $ & $ 3.000 $ \\
 $ 5.000\times 10^{-2} $ & $ 2.240\times 10^{-7} $ & $ 4.005 $ & $ 2.902\times 10^{-6} $ & $ 4.010 $ & $ 6.244\times 10^{-5} $ & $ 3.000 $ & $ 1.025\times 10^{-3} $ & $ 3.000 $ \\
 $ 4.167\times 10^{-2} $ & $ 1.080\times 10^{-7} $ & $ 4.003 $ & $ 1.398\times 10^{-6} $ & $ 4.007 $ & $ 3.613\times 10^{-5} $ & $ 3.000 $ & $ 5.931\times 10^{-4} $ & $ 3.000 $ \\
 $ 3.571\times 10^{-2} $ & $ 5.826\times 10^{-8} $ & $ 4.002 $ & $ 7.538\times 10^{-7} $ & $ 4.005 $ & $ 2.275\times 10^{-5} $ & $ 3.000 $ & $ 3.735\times 10^{-4} $ & $ 3.000 $ \\
 $ 3.125\times 10^{-2} $ & $ 3.414\times 10^{-8} $ & $ 4.002 $ & $ 4.417\times 10^{-7} $ & $ 4.004 $ & $ 1.524\times 10^{-5} $ & $ 3.000 $ & $ 2.502\times 10^{-4} $ & $ 3.000 $ \\
 \hline
\end{tabular}
}
\caption{Experimental convergence rate $r$ in $L^2$ and $H^1$ norms with $\mathcal{P}^3_h$ elements}\label{tab:eoc3}
\end{table}

\begin{table}[t]
{\small
\begin{tabular}{ccccccccc}
\hline
 $h$  & $E_0[h,\phi]$ & $r$ & $E_0[h,\eta]$ & $r$ & $E_1[h,\phi]$ & $r$ & $E_1[h,\eta]$ & $r$ \\
 \hline
 $ 1.250\times 10^{-1} $ & $ 1.747\times 10^{-7}  $ &  --       & $ 5.192\times 10^{-6} $ &  --       & $ 2.446\times 10^{-5} $ &  --       & $ 7.838\times 10^{-4} $ &  --       \\
 $ 8.333\times 10^{-2} $ & $ 2.303\times 10^{-8}  $ & $ 4.998 $ & $ 6.988\times 10^{-7} $ & $ 4.946 $ & $ 4.829\times 10^{-6} $ & $ 4.001 $ & $ 1.550\times 10^{-4} $ & $ 3.997 $ \\
 $ 6.250\times 10^{-2} $ & $ 5.466\times 10^{-9}  $ & $ 4.999 $ & $ 1.672\times 10^{-7} $ & $ 4.972 $ & $ 1.527\times 10^{-6} $ & $ 4.001 $ & $ 4.904\times 10^{-5} $ & $ 4.000 $ \\
 $ 5.000\times 10^{-2} $ & $ 1.791\times 10^{-9}  $ & $ 4.999 $ & $ 5.499\times 10^{-8} $ & $ 4.983 $ & $ 6.254\times 10^{-7} $ & $ 4.001 $ & $ 2.009\times 10^{-5} $ & $ 4.000 $ \\
 $ 4.167\times 10^{-2} $ & $ 7.200\times 10^{-10} $ & $ 5.000 $ & $ 2.214\times 10^{-8} $ & $ 4.989 $ & $ 3.016\times 10^{-7} $ & $ 4.001 $ & $ 9.687\times 10^{-6} $ & $ 4.000 $ \\
 $ 3.571\times 10^{-2} $ & $ 3.331\times 10^{-10} $ & $ 5.000 $ & $ 1.026\times 10^{-8} $ & $ 4.992 $ & $ 1.628\times 10^{-7} $ & $ 4.001 $ & $ 5.229\times 10^{-6} $ & $ 4.000 $ \\
 $ 3.125\times 10^{-2} $ & $ 1.709\times 10^{-10} $ & $ 5.000 $ & $ 5.266\times 10^{-9} $ & $ 4.994 $ & $ 9.541\times 10^{-8} $ & $ 4.000 $ & $ 3.065\times 10^{-6} $ & $ 4.000 $ \\
 \hline
\end{tabular}
}
\caption{Experimental convergence rate $r$ in $L^2$ and $H^1$ norms with $\mathcal{P}^4_h$ elements}\label{tab:eoc4}
\end{table}

Tables \ref{tab:eoc1}--\ref{tab:eoc4} present the recorded errors and the corresponding convergence rates for various uniform grids. We observe that the experimental convergence rates are optimal in all cases indicating that the numerical solution in the finite element space $\mathcal{P}^r$ satisfies the estimate
$$\|\eta-\teta\|+\|\phi-\tphi\|+h [\|\eta-\teta\|_1+\|\phi-\tphi\|_1]=O(h^{r+1})\ .$$
This estimate provides also the convergence of the velocity field, in the sense that
$$\|\bu-\tilde{\bu}\|=\|\nabla\phi-\nabla\tphi\|\leq \|\phi-\tphi\|_1=O(h^r)\ .$$

It is worth mentioning that we repeated the experiments with exact solution $\eta(\bx,t)=e^t\cos(\pi x)\cos(\pi y)$, keeping the rest of the data the same, which resulted in similar results and the same conclusions.

\subsection{Stability of the numerical method for large values of $\Delta t$}

To ensure that the semidiscrete system (\ref{eq:mixedform}) is not stiff and that the relaxation Runge-Kutta method does not require any restrictive bounds on the timestep parameter $\Delta t$, we performed a series of experiments with large $\Delta t$. First we repeated the computation of the convergence rate of the previous section but with $\Delta t=h$ for the classical fourth order Runge-Kutta method. The results are presented in Tables \ref{tab:eoc5}--\ref{tab:eoc8}.

\begin{table}[t]
{\small
\begin{tabular}{ccccccccc}
\hline
 $h$  & $E_0[h,\phi]$ & $r$ & $E_0[h,\eta]$ & $r$ & $E_1[h,\phi]$ & $r$ & $E_1[h,\eta]$ & $r$ \\
 \hline
 $ 2.0\times 10^{-1} $ & $ 1.289\times 10^{-1}  $ &  --       & $ 1.421\times 10^{-1} $ &  --       & $ 1.010\times 10^{0} $ &  --       & $ 4.049\times 10^{0} $ &  --       \\
 $ 1.0\times 10^{-1} $ & $ 3.076\times 10^{-2}  $ & $ 2.067 $ & $ 3.407\times 10^{-2} $ & $ 2.061 $ & $ 5.011\times 10^{-1} $ & $ 1.011 $ & $ 2.009\times 10^{0} $ & $ 1.011 $ \\
 $ 5.0\times 10^{-2} $ & $ 7.613\times 10^{-3}  $ & $ 2.014 $ & $ 8.431\times 10^{-3} $ & $ 2.014 $ & $ 2.501\times 10^{-1} $ & $ 1.002 $ & $ 1.003\times 10^{0} $ & $ 1.003 $ \\
 $ 2.5\times 10^{-2} $ & $ 1.899\times 10^{-3}  $ & $ 2.003 $ & $ 2.103\times 10^{-3} $ & $ 2.004 $ & $ 1.250\times 10^{-1} $ & $ 1.001 $ & $ 5.012\times 10^{-1} $ & $ 1.001 $ \\
 $ 2.0\times 10^{-2} $ & $ 1.215\times 10^{-3}  $ & $ 2.001 $ & $ 1.345\times 10^{-3} $ & $ 2.001 $ & $ 1.000\times 10^{-1} $ & $ 1.000 $ & $ 4.010\times 10^{-1} $ & $ 1.000 $ \\
 \hline
\end{tabular}
\caption{Experimental convergence rate $r$ in $L^2$ and $H^1$ norms with $\mathcal{P}^1_h$ elements and $\Delta t=h$}\label{tab:eoc5}
}\end{table}

\begin{table}[t]
{\small
\begin{tabular}{ccccccccc}
\hline
 $h$  & $E_0[h,\phi]$ & $r$ & $E_0[h,\eta]$ & $r$ & $E_1[h,\phi]$ & $r$ & $E_1[h,\eta]$ & $r$ \\
 \hline
 $ 2.0\times 10^{-1} $ & $ 2.042\times 10^{-3}  $ &  --       & $ 1.244\times 10^{-2} $ &  --       & $ 8.128\times 10^{-2} $ &  --       & $ 6.372\times 10^{-1} $ &  --       \\
 $ 1.0\times 10^{-1} $ & $ 2.378\times 10^{-4}  $ & $ 3.103 $ & $ 1.749\times 10^{-3} $ & $ 2.831 $ & $ 2.045\times 10^{-2} $ & $ 1.991 $ & $ 1.633\times 10^{-1} $ & $ 1.965 $ \\
 $ 5.0\times 10^{-2} $ & $ 2.906\times 10^{-5}  $ & $ 3.032 $ & $ 2.268\times 10^{-4} $ & $ 2.946 $ & $ 5.119\times 10^{-3} $ & $ 1.998 $ & $ 4.097\times 10^{-2} $ & $ 1.995 $ \\
 $ 2.5\times 10^{-2} $ & $ 3.612\times 10^{-6}  $ & $ 3.009 $ & $ 2.864\times 10^{-5} $ & $ 2.986 $ & $ 1.280\times 10^{-3} $ & $ 2.000 $ & $ 1.025\times 10^{-2} $ & $ 1.999 $ \\
 $ 2.0\times 10^{-2} $ & $ 1.848\times 10^{-6}  $ & $ 3.003 $ & $ 1.468\times 10^{-5} $ & $ 2.995 $ & $ 8.192\times 10^{-4} $ & $ 2.000 $ & $ 6.559\times 10^{-3} $ & $ 2.000 $ \\
 \hline
\end{tabular}
\caption{Experimental convergence rate $r$ in $L^2$ and $H^1$ norms with $\mathcal{P}^2_h$ elements and $\Delta t=h$}\label{tab:eoc6}
}\end{table}

\begin{table}[t]
{\small
\begin{tabular}{ccccccccc}
\hline
 $h$  & $E_0[h,\phi]$ & $r$ & $E_0[h,\eta]$ & $r$ & $E_1[h,\phi]$ & $r$ & $E_1[h,\eta]$ & $r$ \\
 \hline
 $ 2.0\times 10^{-1} $ & $ 7.255\times 10^{-5}  $ &  --       & $ 7.875\times 10^{-4} $ &  --       & $ 4.013\times 10^{-3} $ &  --       & $ 6.552\times 10^{-2} $ &  --       \\
 $ 1.0\times 10^{-1} $ & $ 4.118\times 10^{-6}  $ & $ 4.139 $ & $ 4.705\times 10^{-5} $ & $ 4.065 $ & $ 4.999\times 10^{-4} $ & $ 3.005 $ & $ 8.201\times 10^{-3} $ & $ 2.998 $ \\
 $ 5.0\times 10^{-2} $ & $ 2.672\times 10^{-7}  $ & $ 3.946 $ & $ 2.905\times 10^{-6} $ & $ 4.018 $ & $ 6.245\times 10^{-5} $ & $ 3.001 $ & $ 1.025\times 10^{-3} $ & $ 3.000 $ \\
 $ 2.5\times 10^{-2} $ & $ 1.739\times 10^{-8}  $ & $ 3.942 $ & $ 1.810\times 10^{-7} $ & $ 4.004 $ & $ 7.804\times 10^{-6} $ & $ 3.000 $ & $ 1.281\times 10^{-4} $ & $ 3.000 $ \\
 $ 2.0\times 10^{-2} $ & $ 7.191\times 10^{-9}  $ & $ 3.957 $ & $ 7.411\times 10^{-8} $ & $ 4.002 $ & $ 3.996\times 10^{-6} $ & $ 3.000 $ & $ 6.558\times 10^{-5} $ & $ 3.000 $ \\
 \hline
\end{tabular}
\caption{Experimental convergence rate $r$ in $L^2$ and $H^1$ norms with $\mathcal{P}^3_h$ elements and $\Delta t=h$}\label{tab:eoc7}
}\end{table}

\begin{table}[t]
{\small
\begin{tabular}{ccccccccc}
\hline
 $h$  & $E_0[h,\phi]$ & $r$ & $E_0[h,\eta]$ & $r$ & $E_1[h,\phi]$ & $r$ & $E_1[h,\eta]$ & $r$ \\
 \hline
 $ 2.0\times 10^{-1} $ & $ 3.766\times 10^{-5}  $ &  --       & $ 6.016\times 10^{-5} $ &  --       & $ 2.871\times 10^{-4} $ &  --       & $ 5.112\times 10^{-3} $ &  --       \\
 $ 1.0\times 10^{-1} $ & $ 2.025\times 10^{-6}  $ & $ 4.217 $ & $ 2.678\times 10^{-6} $ & $ 4.489 $ & $ 1.811\times 10^{-5} $ & $ 3.987 $ & $ 3.221\times 10^{-4} $ & $ 3.988 $ \\
 $ 5.0\times 10^{-2} $ & $ 1.470\times 10^{-7}  $ & $ 3.784 $ & $ 1.429\times 10^{-7} $ & $ 4.228 $ & $ 1.140\times 10^{-6} $ & $ 3.989 $ & $ 2.014\times 10^{-5} $ & $ 3.999 $ \\
 $ 2.5\times 10^{-2} $ & $ 1.037\times 10^{-8}  $ & $ 3.826 $ & $ 8.541\times 10^{-9} $ & $ 4.064 $ & $ 7.161\times 10^{-8} $ & $ 3.993 $ & $ 1.259\times 10^{-6} $ & $ 4.000 $ \\
 $ 2.0\times 10^{-2} $ & $ 4.357\times 10^{-9}  $ & $ 3.886 $ & $ 3.483\times 10^{-9} $ & $ 4.020 $ & $ 2.936\times 10^{-8} $ & $ 3.995 $ & $ 5.156\times 10^{-7} $ & $ 4.000 $ \\
 \hline
\end{tabular}
\caption{Experimental convergence rate $r$ in $L^2$ and $H^1$ norms with $\mathcal{P}^4_h$ elements and $\Delta t=h$}\label{tab:eoc8}
}\end{table}

We observe that the convergence rates are the expected optimal rates. In the case of quartic elements $\mathcal{P}^4_h$, the convergence in space is fifth order as shown in the previous section while the convergence of the Runge-Kutta method is of fourth order. Therefore, the temporal error dominates, and as a result, we observe in Table \ref{tab:eoc8} that the convergence rates in the $L^2$ norm are of fourth order. This also serves as an indication that the convergence in time is of fourth order as expected.

Furthermore, we studied the propagation and reflection of the analytical solitary wave solution (\ref{eq:solitwave1})--(\ref{eq:solitwave2}) of the Bona-Smith system with $\theta^2 = 9/11$, $D_0 = 1$, and $g = 1$ in a rectangular channel $[-50, 50] \times [-5, 5]$ of depth $D_0 = 1$. The analytical solitary wave of this particular system has an amplitude $A = 1$, and thus everything scaled to $O(1)$ can provide adequate evidence of the presence of possible stiffness.

For the spatial discretization, we considered a regular (unstructured) triangulation of the domain of $55080$ triangles with the maximum and minimum triangle sides being $\max(h_i) \approx 0.30$ and $\min(h_i) \approx 0.15$, respectively. For the temporal discretization, we used the Relaxation Runge-Kutta method of order four with $\Delta t = 0.1, 0.4, 0.5$, and $1$ where we integrated the system up to $T=100$. For $\Delta t = 2$, the numerical method became unstable and the Relaxation Runge-Kutta method failed to converge for all finite element spaces; however, for all the other values of $\Delta t$, the solution remained stable for all times, indicating that the semidiscretization is not stiff and no serious restrictions on $\Delta t$ are required except perhaps that $\Delta t < C$ where $C$ a constant greater than 1. 

The mass and energy for $\mathcal{P}_h^1$ elements were calculated as $\mathcal{M} = 31.13995776646$ and $\mathcal{E} = 24.890739171$, respectively, and remained constant to the displayed decimal places for all tested $\Delta t$ values. Similarly, for $\mathcal{P}_h^2$ elements, we obtained $\mathcal{M} = 31.13995776646$ and $\mathcal{E} = 24.911946804$. Similar conservation properties were observed for higher-order elements. While the numerical solutions exhibited stability for  $\Delta t=1$, the time integration errors became significant. Consequently, we restrict our subsequent numerical experiments to relatively small temporal mesh length, $\Delta t < 1$. 

\subsection{Propagation and reflection of shoaling solitary waves}\label{sec:expsw}

We tested the new method and the Bona-Smith system ($\theta^2 = 1$, $g=9.81~m/s^2$) against the propagation and reflection of line solitary waves using two laboratory experiments of \cite{Dodd1998}. We numerically generated solitary waves with amplitudes  $A_1 = 0.07 \,m$ and $A_2 = 0.12 \,m$ using the Petviashvili method in the domain  $\Omega = [-50, 20] \times [0, 1]$ with bottom topography described by the function
$$
D(x, y) = \left\{
\begin{array}{ll}
0.7, & x \leq 0 \\
0.7 - x/50, & x > 0
\end{array}
\right., \quad 0 \leq y \leq 1.
$$
Note that in this experiment we used the original bathymetry without smoothing its corner. Figure \ref{fig:reflxstp} presents the initial solitary wave with amplitude $A_1$ and the bottom topography $-D(x,y)$. 

\begin{figure}[t]
\centering
\includegraphics[width=0.6\columnwidth]{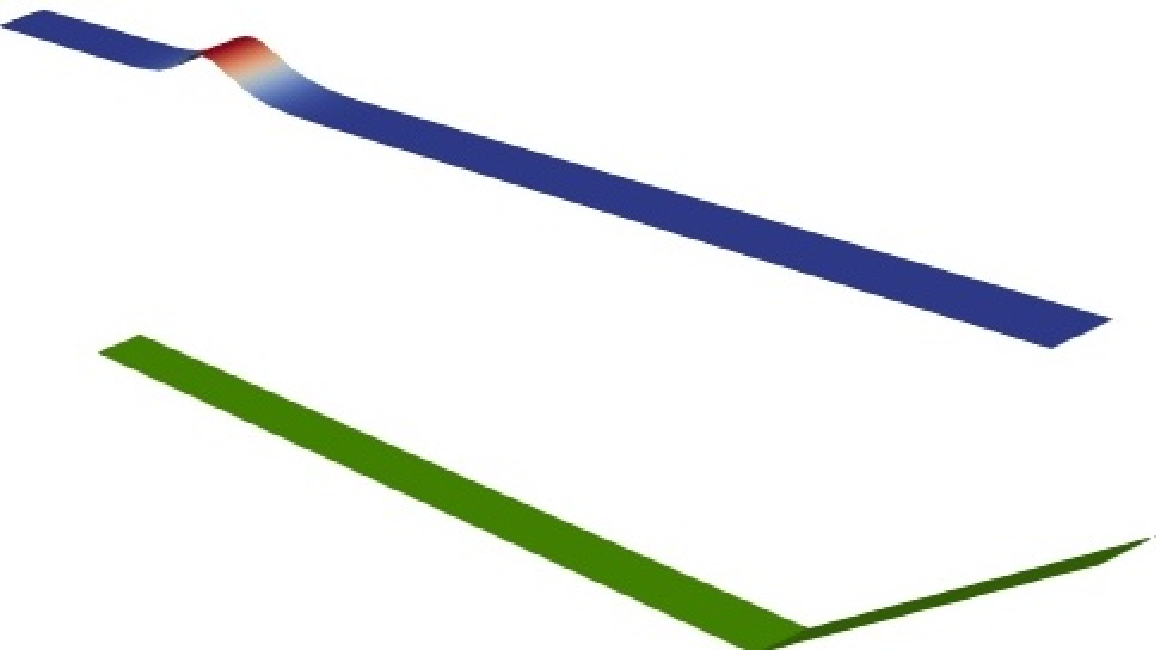}
\caption{Reflection of shoaling solitary waves: Initial setup}
\label{fig:reflxstp}
\end{figure}

All variables used in this experiment are dimensional and expressed in SI units. The traveling waves were horizontally translated such that their maximum at $t = 0$ is achieved at $x_0 = -30 \,m$. These waves traveled to the right until they reached the sloping bottom, where they began shoaling. Then they were reflected by the solid wall, located at $x = 20 \,m$, and propagated to the right followed by dispersive tails. The reflection of water waves on a vertical, solid wall is equivalent to the head-on collision of symmetric counter-propagating waves, and thus the reflection is inelastic \cite{ADM2010}. We recorded the free-surface elevation $\eta$ at three gauges located at $(x,y)=(g_i,0)$ for $i=,1,2,3$ where $g_1 = 0$, $g_2 = 16.25$, and $g_3 = 17.75$.

\begin{figure}[t]
\centering
\includegraphics[width=\columnwidth]{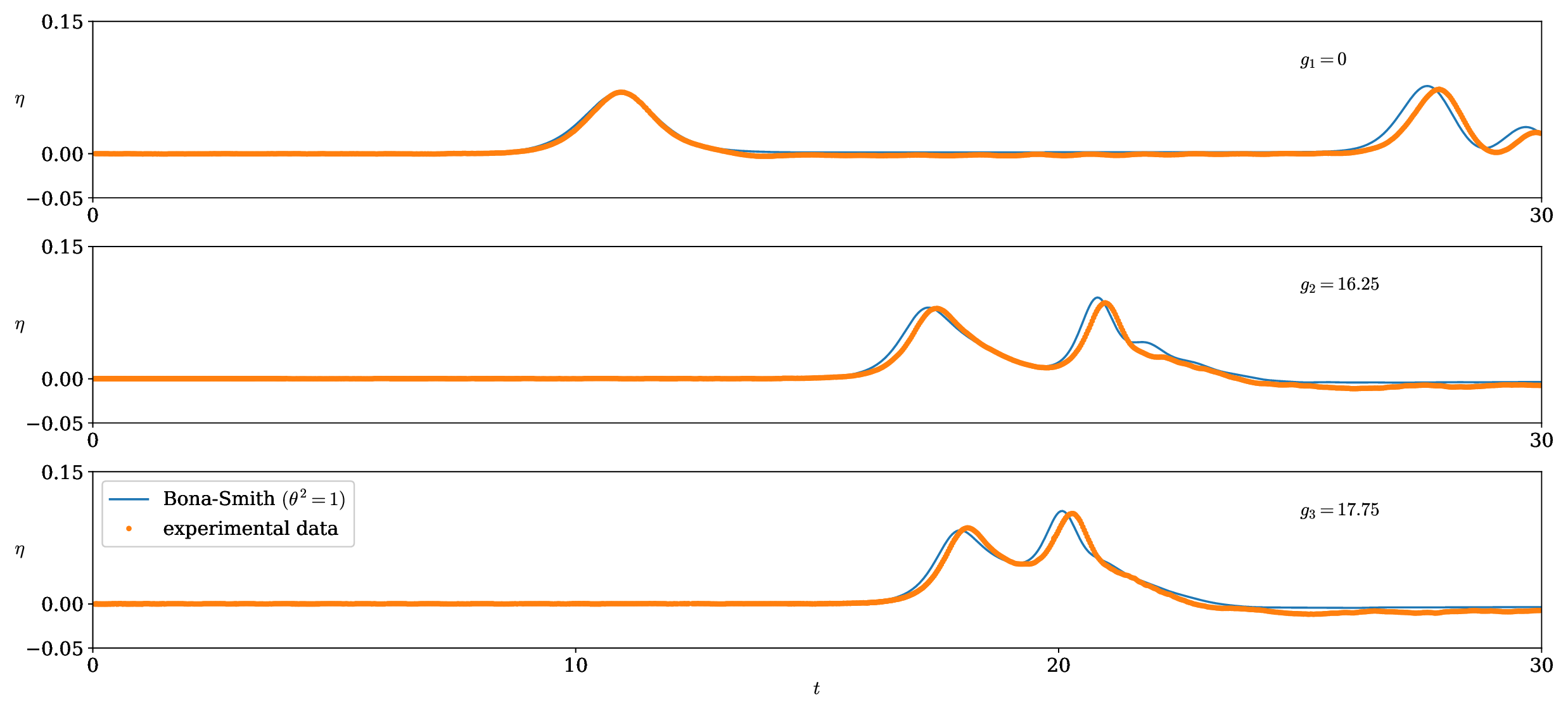}
\caption{Reflection of shoaling solitary wave of amplitude $A_1=0.07~m$ by a solid wall }
\label{fig:refl1}
\end{figure}

\begin{figure}[t]
\centering
\includegraphics[width=\columnwidth]{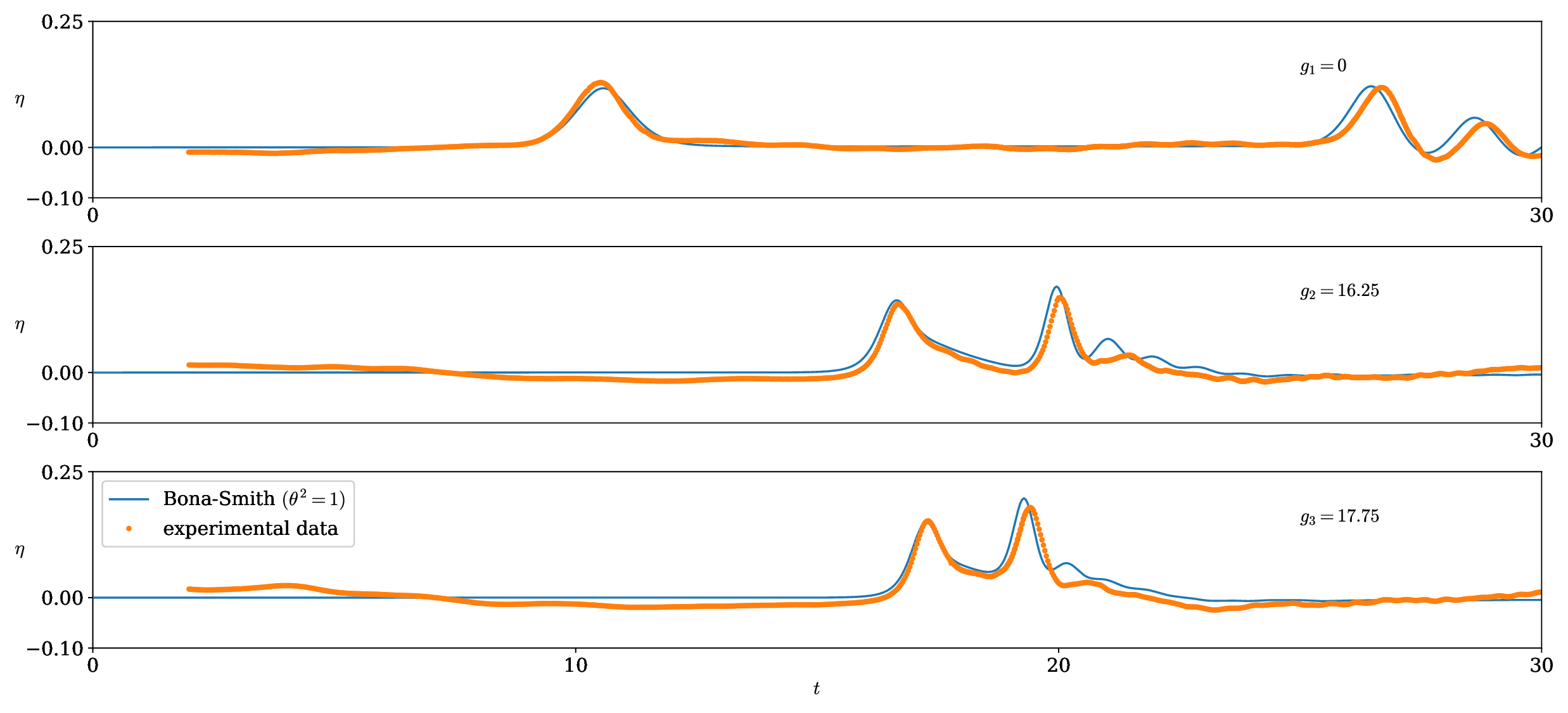}
\caption{Reflection of shoaling solitary wave of amplitude $A_2=0.12~m$ by a solid wall }
\label{fig:refl2}
\end{figure}

In Figures \ref{fig:refl1}--\ref{fig:refl2}, we present a comparison between the laboratory data from \cite{Dodd1998} and the numerical results obtained using a new numerical method for solving the Bona-Smith system with $\theta^2 = 1$. For time discretization, we used $\Delta t = 0.04$. For spatial discretization, we utilized $\mathcal{P}^1_h$ elements for all variables and employed a regular triangulation of the domain $\Omega$ with $N = 3,676$ elements, each with a maximum edge length of approximately $h \approx 0.33$. The agreement observed between the numerical results and the laboratory data corroborates previous findings related to the BBM-BBM and Nwogu systems, suggesting that the dispersion properties of the model are not critical in this experiment \cite{ADM2010}. 

During these experiments, mass and energy were conserved up to the precision shown: when $A_1 = 0.07$, the mass $\mathcal{M}$ was $0.37465842341571$ and the energy $\mathcal{E}$ was $0.17465474989439$; when $A_2 = 0.12$, the mass $\mathcal{M}$ was $0.5049385982123$ and the energy $\mathcal{E}$ was $0.40786323559272$.

In Figure \ref{fig:gamma1}, we depict the difference $\gamma^n-1$ of the relaxation parameter as a function of $t$. In both cases, the parameter $\gamma^n$ remained within the predicted range of $O(\Delta t^3)$ and did not exceed the value of $1+10^{-4}$. However, its value increases notably when the solitary wave interacts with the bottom topography.

\begin{figure}[t]
\centering
\includegraphics[width=\columnwidth]{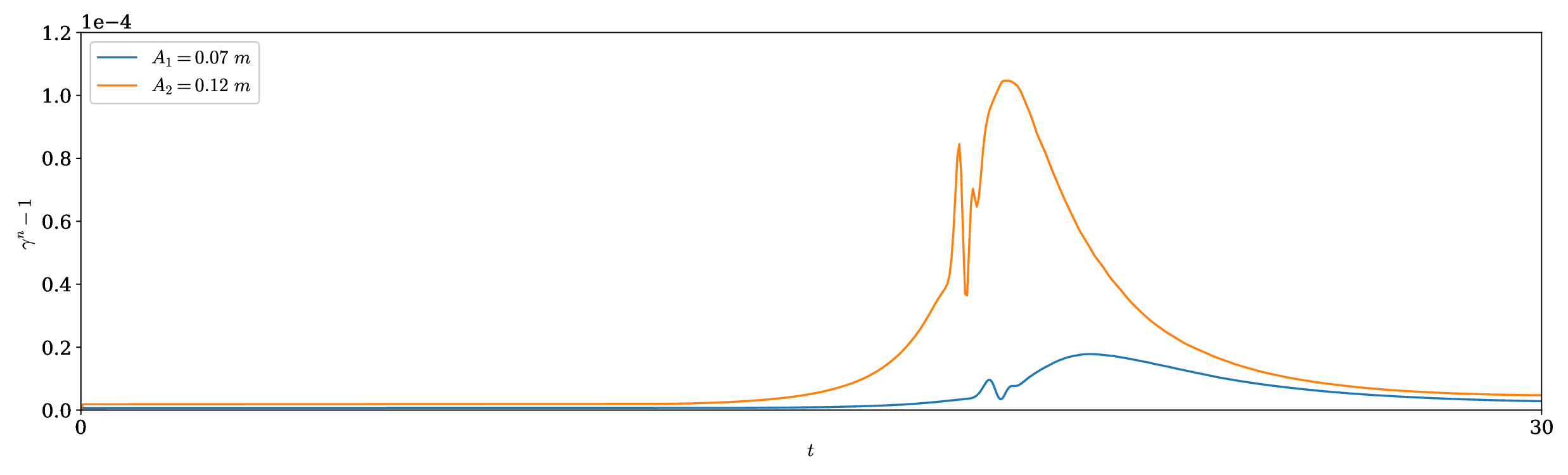}
\caption{Reflection of shoaling solitary waves: Difference $\gamma^n-1$ of the relaxation parameter as a function of $t$}
\label{fig:gamma1}
\end{figure}

\subsection{Solitary wave interaction with vertical cylinder}\label{sec:cylinder}

In the next experiment, we study the interaction of a solitary wave of the Bona-Smith system ($\theta^2=1$, $g=9.81~m/s^2$) with a vertical cylinder \cite{ASB1993}. For this experiment, we consider a numerical solitary wave with speed $c_s= 1.356$ and amplitude of $A\approx 0.036$  propagating over a flat bottom with $D_0=0.15$ in a domain $\Omega=[-4,20]\times [0,0.55]\setminus \mathcal{C}$, where $\mathcal{C}=\{(x,y)\in\mathbb{R}^2~:~(x-4.5)^2+(y-0.275)^2\leq 0.08^2\}$ represents a circle with center $(4.5,0.275)$ and radius $0.08~m$. This domain simulates a channel with a vertical cylinder.

\begin{figure}[t]
\centering
\includegraphics[width=0.4\columnwidth]{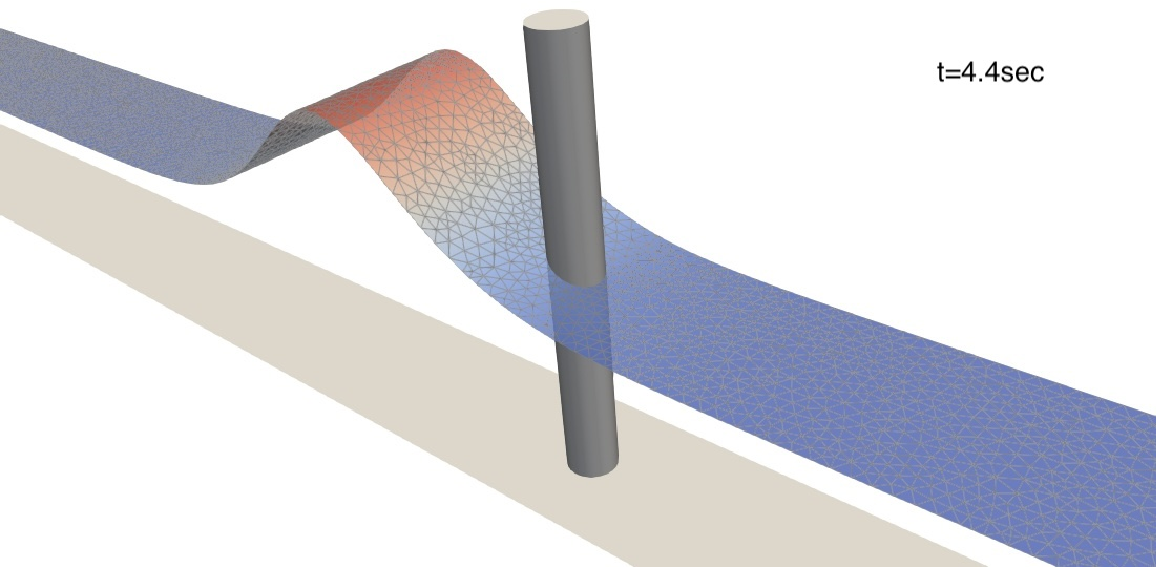}
\includegraphics[width=0.4\columnwidth]{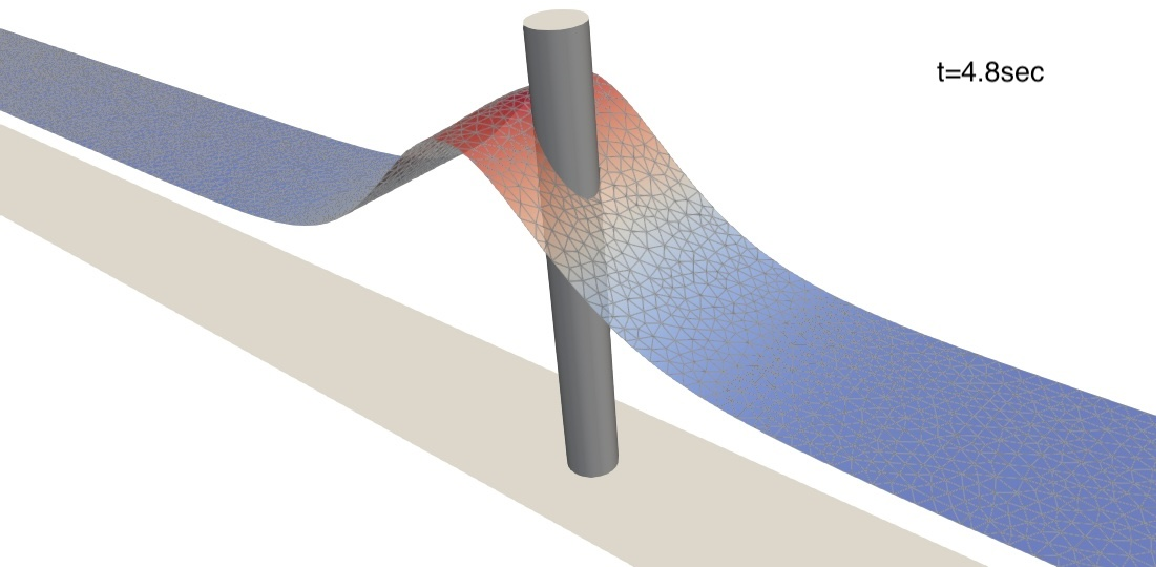}\\
\includegraphics[width=0.4\columnwidth]{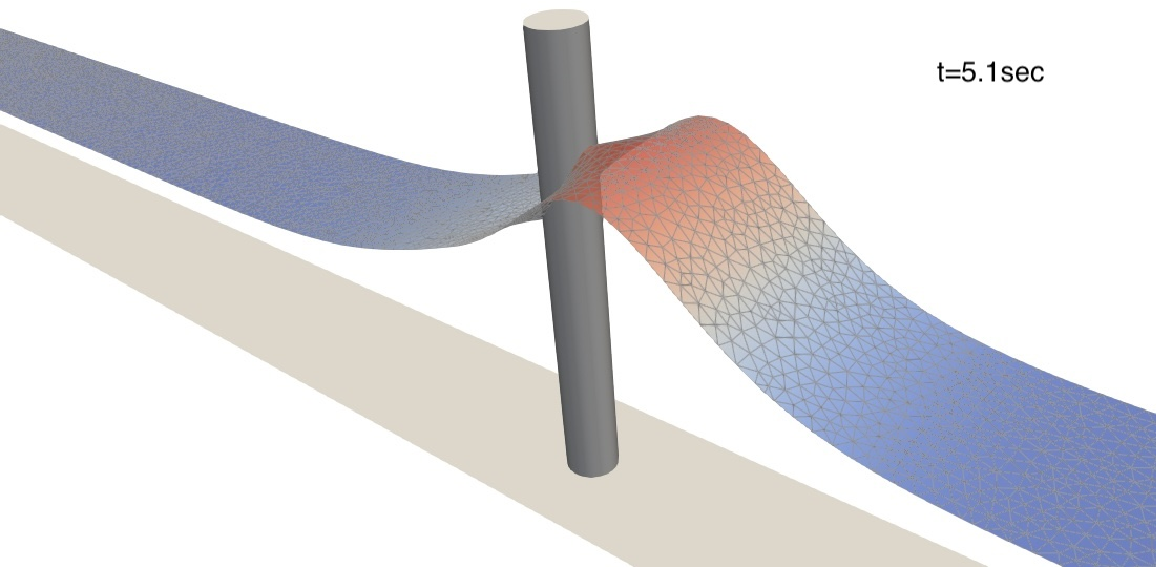}
\includegraphics[width=0.4\columnwidth]{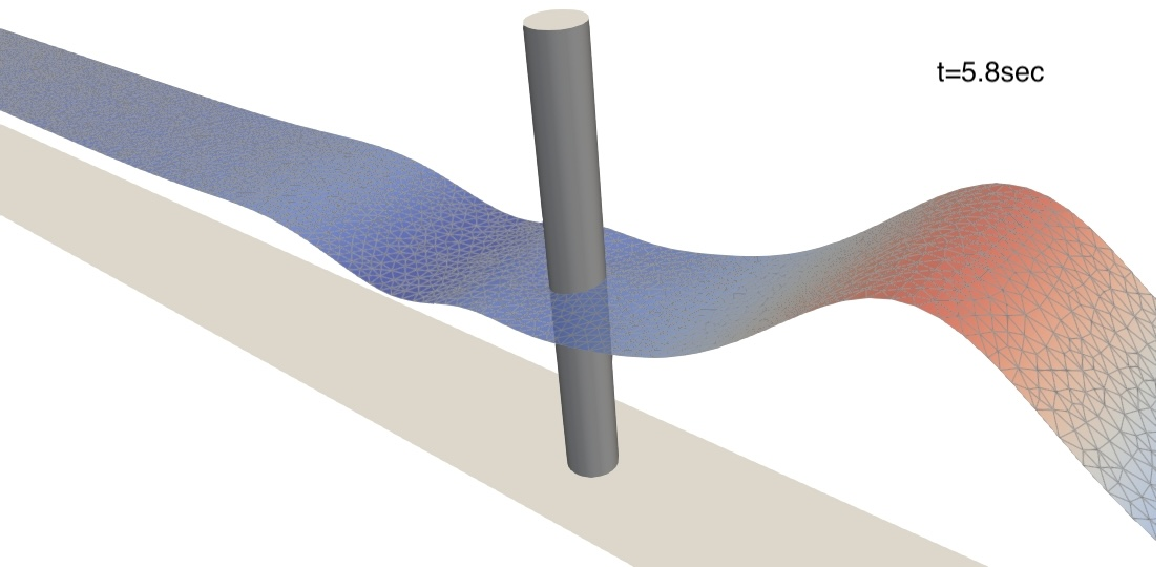}
\caption{Interaction of solitary wave with cylinder}
\label{fig:cyl1}
\end{figure}

\begin{figure}[t]
\centering
\includegraphics[width=\columnwidth]{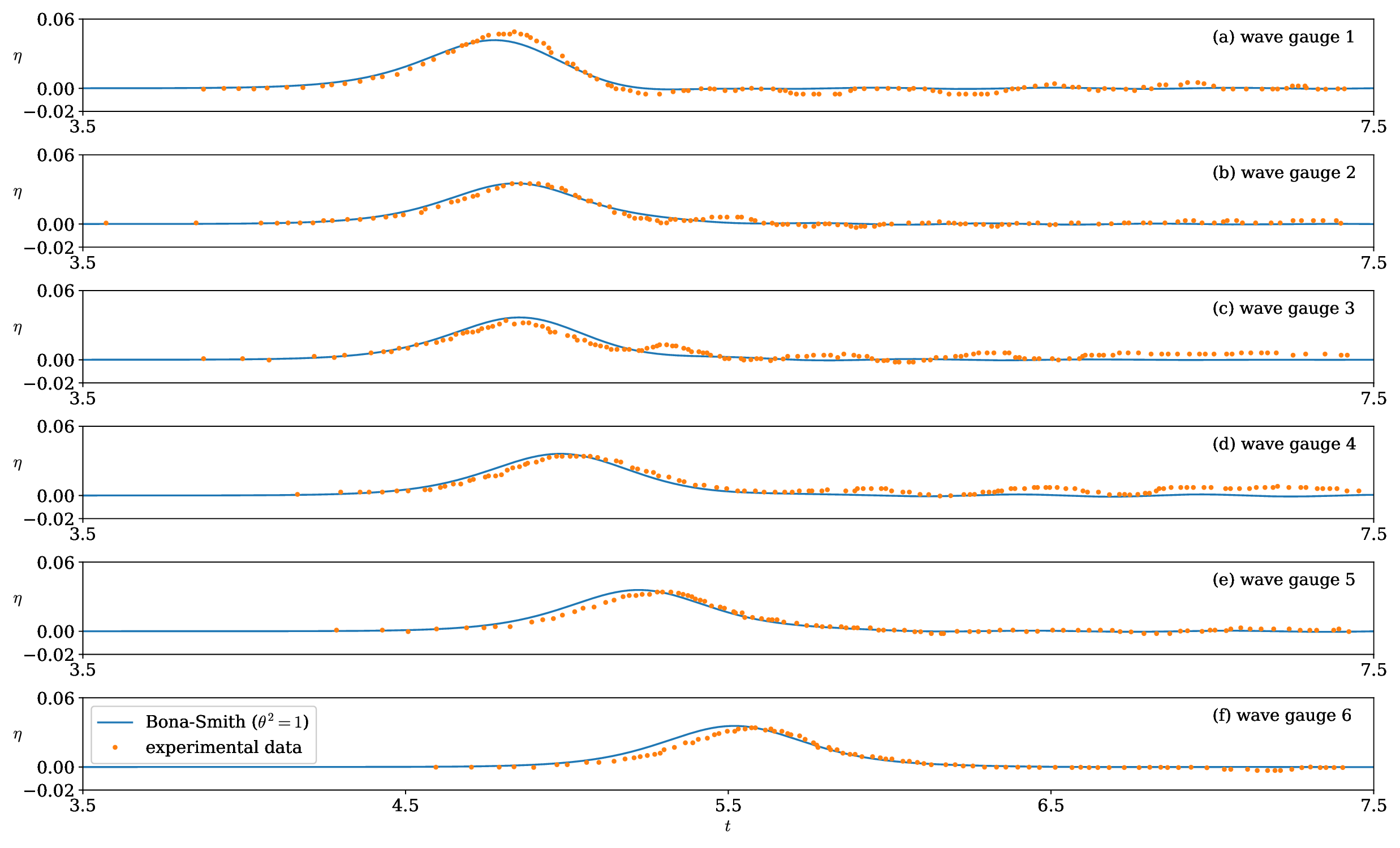}
\caption{Scattering of solitary wave by cylinder: Recorded solution at six wave gauges}
\label{fig:cyl2}
\end{figure}
\begin{figure}[t]
\centering
\includegraphics[width=\columnwidth]{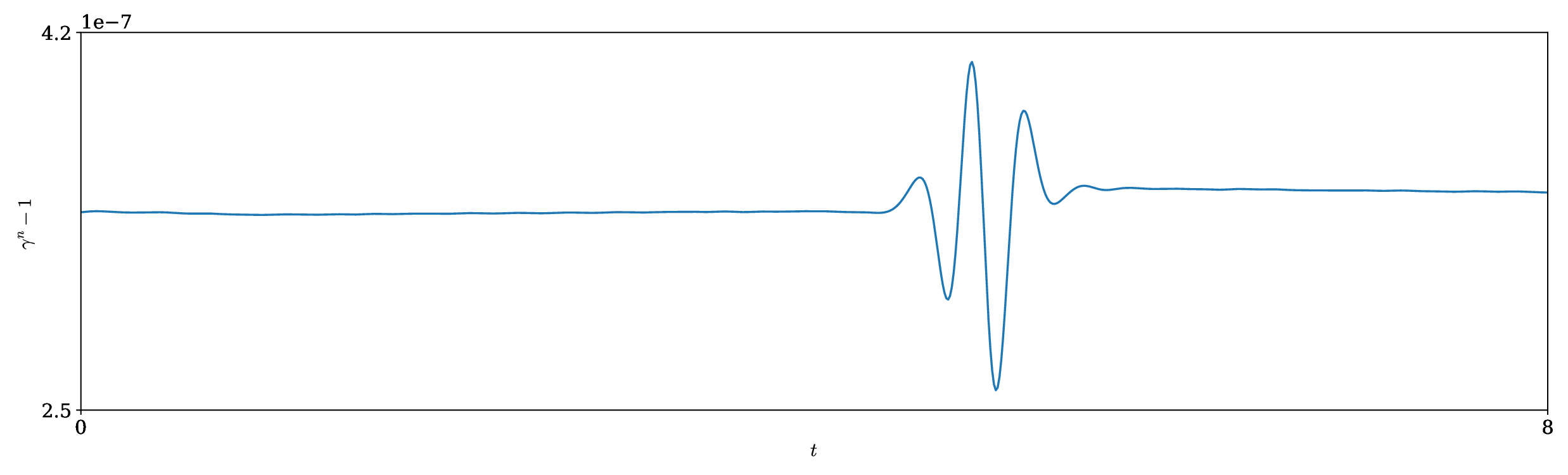}
\caption{Scattering of solitary wave by cylinder: Difference $\gamma^n-1$ of the relaxation parameter as a function of $t$}
\label{fig:gamma2}
\end{figure}

This experiment assumes frictionless walls, necessitating slip-wall conditions. The solution was recorded at six wave gauges located at positions: $g_1 = (4.4, 0.271)$, $g_2 = (4.5, 0.17)$, $g_3 = (4.5, 0.045)$, $g_4 = (4.6, 0.275)$, $g_5 = (4.975, 0.275)$, and $g_6 = (5.375, 0.275)$. For time discretization, we used $\Delta t = 0.01$. The domain $\Omega$ was discretized using a regular triangular grid consisting of $22,285$ triangles, each with a maximum edge length of $h = 0.059$. The circular obstacle was approximated by a regular icosagon. Spatial discretization utilized $\mathcal{P}^1_h$ elements for all unknowns. The initial conditions for $\eta$ and $\bu$ were generated using the Petviashvili method and adjusted so that the wave's maximum was positioned at $x = -2.1$.

The simulation remained stable and resulted in a smooth solution. In Figure \ref{fig:cyl1}, we present the interaction of the solitary wave with the cylinder. In Figure \ref{fig:cyl2}, we present the recorded solution at the six wave gauges $g_i$, $i=1,\dots,6$. We observe that contrary to the results obtained by the BBM-BBM system, the Bona-Smith system does not result in large oscillations around the cylinder. This characteristic of the Bona-Smith system has also been observed in the case of sticky-wall boundary conditions \cite{DMS2010}.

In this experiment, the mass was $\mathcal{M}=0.015807360969348$, the energy was $\mathcal{E}=0.0040425386059991$ (preserving the shown digits), while the vorticity was $0.0$. The parameter $\gamma^n$ remained close to $1$, with $\gamma^n-1=O(10^{-7})$ as expected. The recorded values of the difference $\gamma^n-1$ are presented in Figure \ref{fig:gamma2} as a function of time. We observe that during the interaction of the solitary wave with the cylinder, the parameter $\gamma^n$ fluctuates and then stabilizes close to a slightly higher value compared to its value before the interaction.

\subsection{Periodic waves over a submerged bar}\label{sec:perdc}

In this experiment, we explore a challenging scenario for Boussinesq systems, as detailed in \cite{BB1994}. We examine the dynamics when a small-amplitude periodic wave train travels over a flat bottom and subsequently interacts with a submerged bar within a channel. As these periodic waves encounter the rising slope of the submerged bar, they shoal and steepen, resulting in the generation of higher harmonics. Subsequently, the waves accelerate down the descending slope, generating a broad spectrum of wave-numbers, thereby testing the limits of Boussinesq models. For the initial conditions we consider the functions
$$\begin{aligned}
&\eta(\bx,0)=A\cos(k(x-x_0))(1-\tanh(x))(1+\tanh(x+x_1))/4\ ,\\
&\bu(\bx,0)=\tfrac{g}{D_0}\left[\eta(\bx,0)-\tfrac{1}{4D_0}\eta^2(\bx,0)-cD_0^2\eta_{xx}(\bx,0)\right]\ ,
\end{aligned}$$
where $A=0.01~m$, $g=9.81~m/s^2$, $D_0=0.4~m$, $k=1.67$, $x_0=3.6~m$, and $x_1=60~m$.
The bottom topography is described by the depth function
$$
D(\bx)=\left\{\begin{array}{ll}
-0.05x+0.7, & x\in[6,12)\\
0.1, & x\in[12,14)\\
0.1 x -1.3, & x\in[14,17]\\
0.4, & \text{elsewhere}
\end{array}\right.\ .
$$
Note that in this experiment we used the original bottom function without smoothing its corners. The initial conditions for the free-surface elevation and the bathymetry are presented in Figure \ref{fig:bottomm}. The formula for the velocity $\bu$ can be derived using asymptotic techniques and the BBM trick assuming unidirectional wave propagation \cite{BC1998}, resulting in waves that mainly propagate in one direction.

\begin{figure}[t]
\centering
\includegraphics[width=0.6\columnwidth]{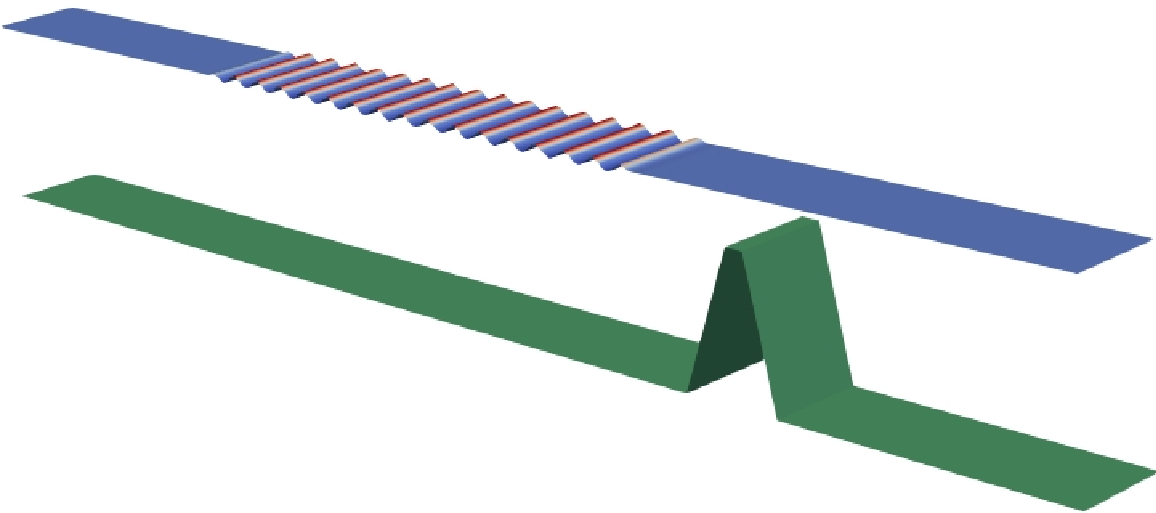}
\caption{Initial periodic wave-train and bottom topography}
\label{fig:bottomm}
\end{figure}
\begin{figure}[t]
\centering
\includegraphics[width=\columnwidth]{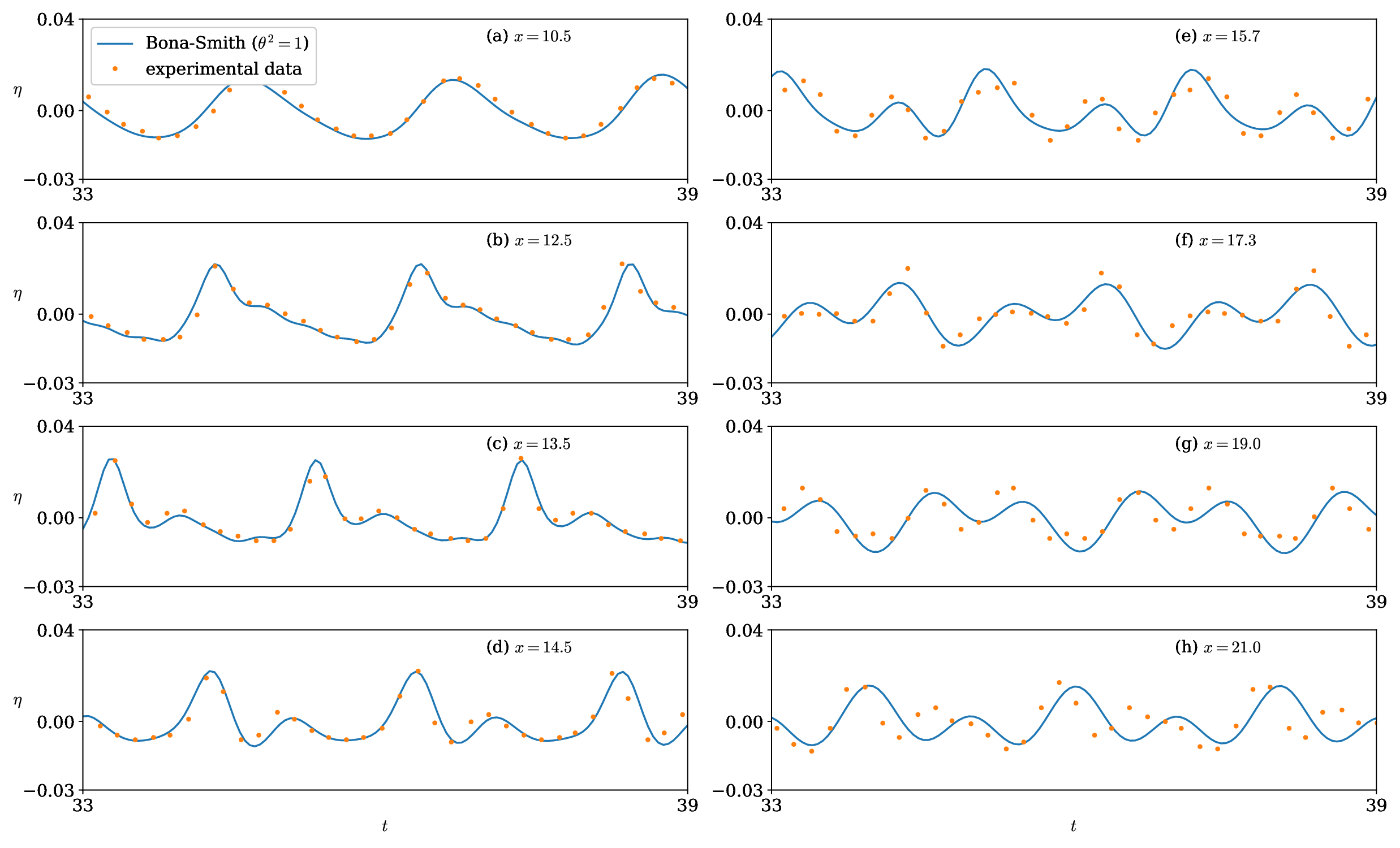}
\caption{Periodic waves over a bar: Free-surface elevation at various wave gauges}
\label{fig:peridig}
\end{figure}
\begin{figure}[t]
\centering
\includegraphics[width=\columnwidth]{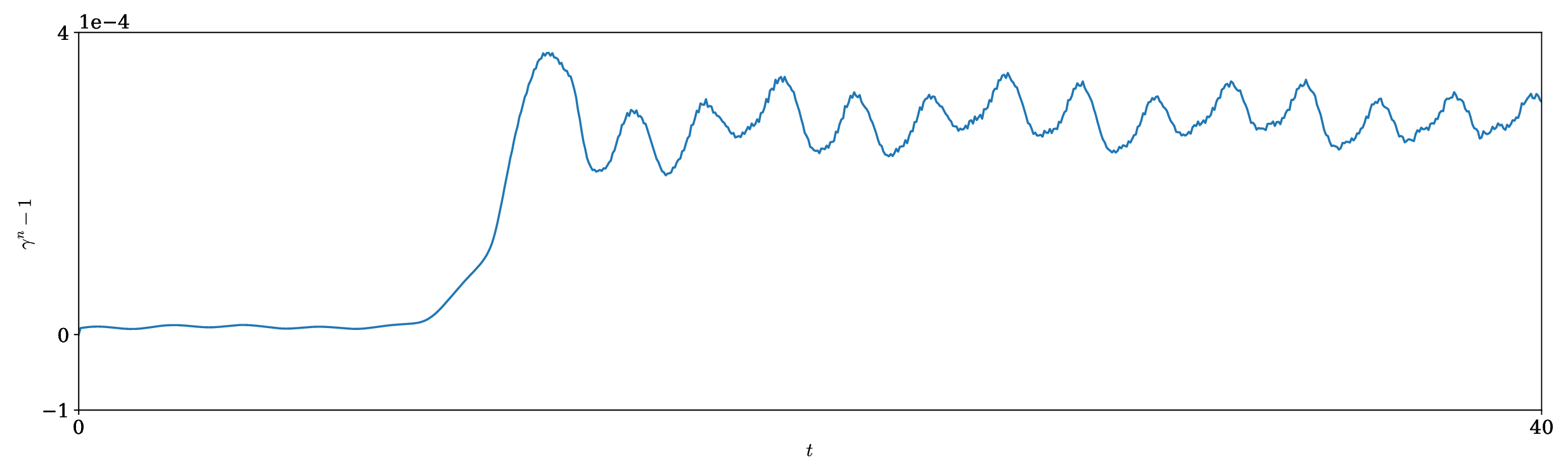}
\caption{Periodic waves over a bar: Difference $\gamma^n-1$ of the relaxation parameter as a function of $t$}
\label{fig:gammaper}
\end{figure}

In this experiment, we numerically integrated the Bona-Smith system ($\theta^2=1$, $g=9.81~m/s^2$) within the domain $\Omega=[-100,50]\times [0,1]$ until $T=40~s$. The domain was discretized using a uniform triangulation of 24,000 right-angle triangles, with the maximum side length of the triangles approximately $0.16$. For spatial discretization, we employed $\mathcal{P}^2_h$ elements for all the unknowns, and for time integration, we used a time step $\Delta t=0.05$. We recorded the free-surface elevation at eight wave gauges located at the points $(x,0.5)$ for $x=10.5, 12.5, 13.5, 14.5, 15.7$, and $17.3$. Figure \ref{fig:peridig} presents a comparison of the recorded solution with the laboratory data from \cite{BB1994}. The agreement between the numerical and laboratory data is satisfactory, with the numerical solution remaining close to the experimental data at all wave gauges, especially at the first four, where the numerical solution nearly interpolates the experimental data.  The numerical results of other Bona-Smith systems with $\theta^2<1$ are very similar, but the one with $\theta^2=1$ provides the best approximation for this set of data \cite{IKKM2021,KMS2020}. This improvement is not solely attributable to the new numerical method but also to the improved dispersion characteristics of the particular Bona-Smith system. Systems with enhanced dispersion characteristics, such as Nwogu's system \cite{Nwogu93}, the improved Serre-Green-Naghdi equations \cite{CDM2017,Lannes2013}, and the non-hydrostatic systems of \cite{MMS1991,BGSM2011}, perform better in this experiment because the solution consists of waves with high wave-numbers. 

The computed mass of the numerical solution is $\mathcal{M}=-0.0007507020714$, and the computed energy is $\mathcal{E}=0.035718392272188$, preserving the digits shown, while the vorticity was less than $5\times 10^{-19}$. The negative sign in the mass is because the mass was computed with respect to the variable $\eta$ and not the total depth $D+\eta>0$.

Once again, the parameter $\gamma^n$ remains at $1 + O(\Delta t^3)$. A graph depicting the difference $\gamma^n - 1$ is presented in Figure \ref{fig:gammaper}. As the waves begin interacting with the bottom topography, we observe a slight increase in the value of the parameter $\gamma^n$.

\subsection{Solitary wave scattering at a Y-junction}

Solitary wave scattering at a junction involves the interaction of solitary waves with the geometric features of the channel, specifically at points where the channel splits or merges. In general, as a solitary wave approaches a junction, the change in channel geometry can lead to a variety of behaviors including reflection, transmission, and splitting of the wave. When the wave reaches the junction, part of the wave may transmit into the new channels while part may reflect backward. The exact behavior can depend on the wave’s amplitude, speed, and the channel's properties, \cite{NDS2012}.

Here, we consider a channel with a horizontal bottom that splits into two symmetric channels. The Y-junction forms an angle $\theta\approx 60^\circ$. This setup presents considerable challenges, as solutions to other models such as the non-dispersive shallow water equations tend to form singularities (discontinuities) at the corners of the junction. However, in our case, due to the regularization properties of the Bona-Smith system, the numerical solution remains stable in such scenarios and no spurious oscillations result using continuous finite element methods. 

\begin{figure}[t]
\centering
\includegraphics[width=0.5\columnwidth]{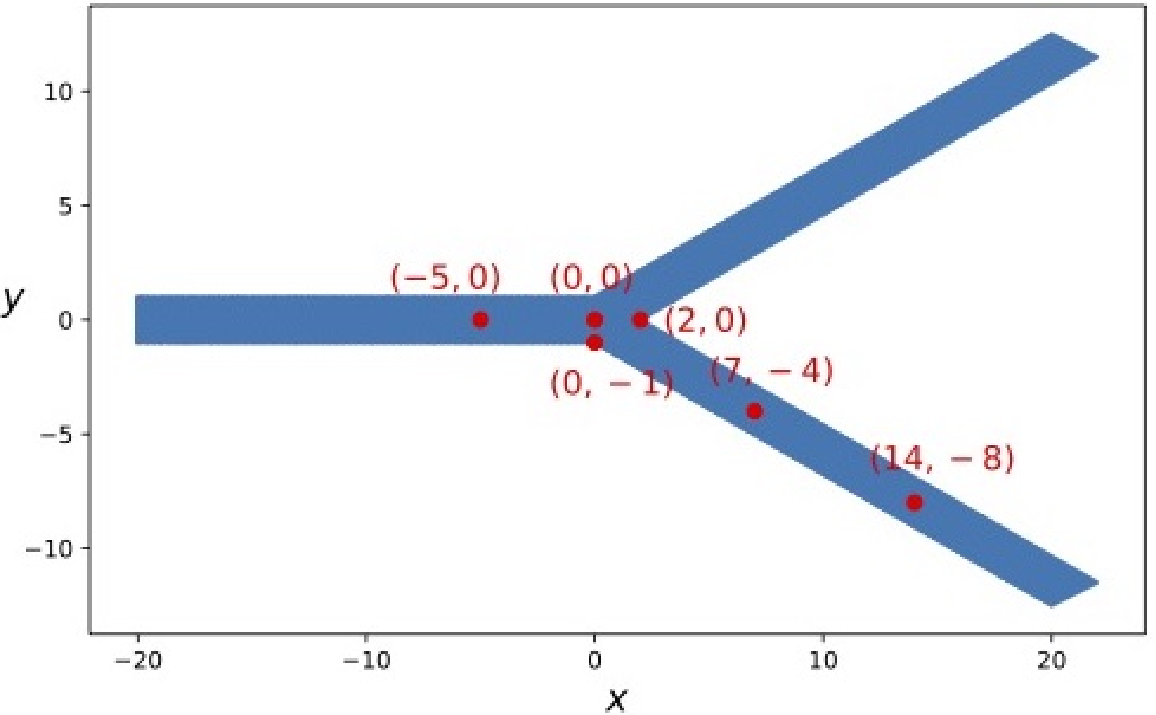}
\caption{Domain $\Omega$ depicting a channel with a junction, along with the locations of the wave gauges}
\label{fig:domainj}
\end{figure}
\begin{figure}[t]
\centering
\includegraphics[width=0.4\columnwidth]{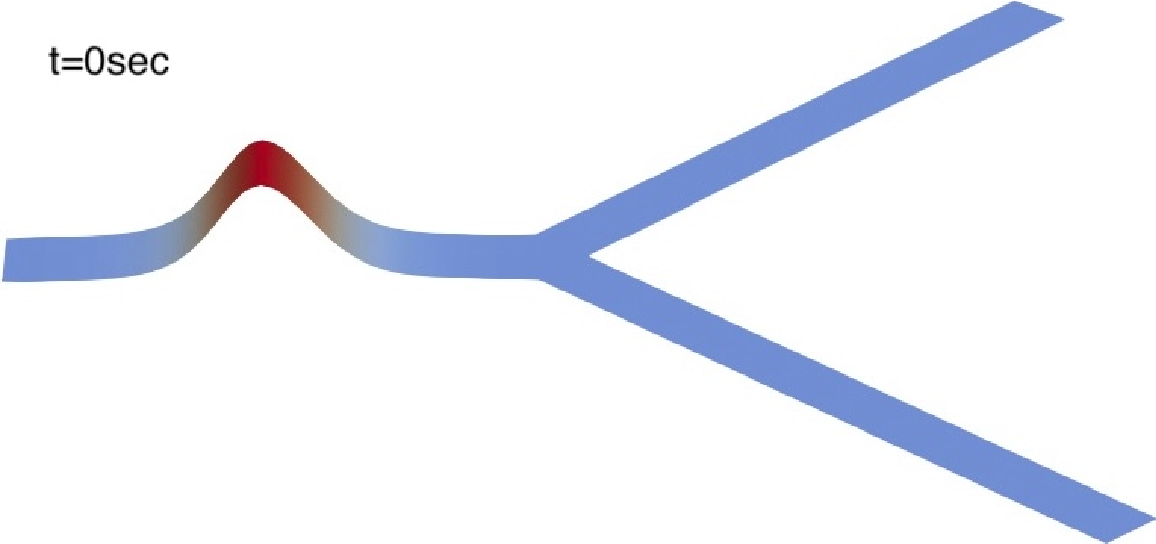}
\includegraphics[width=0.4\columnwidth]{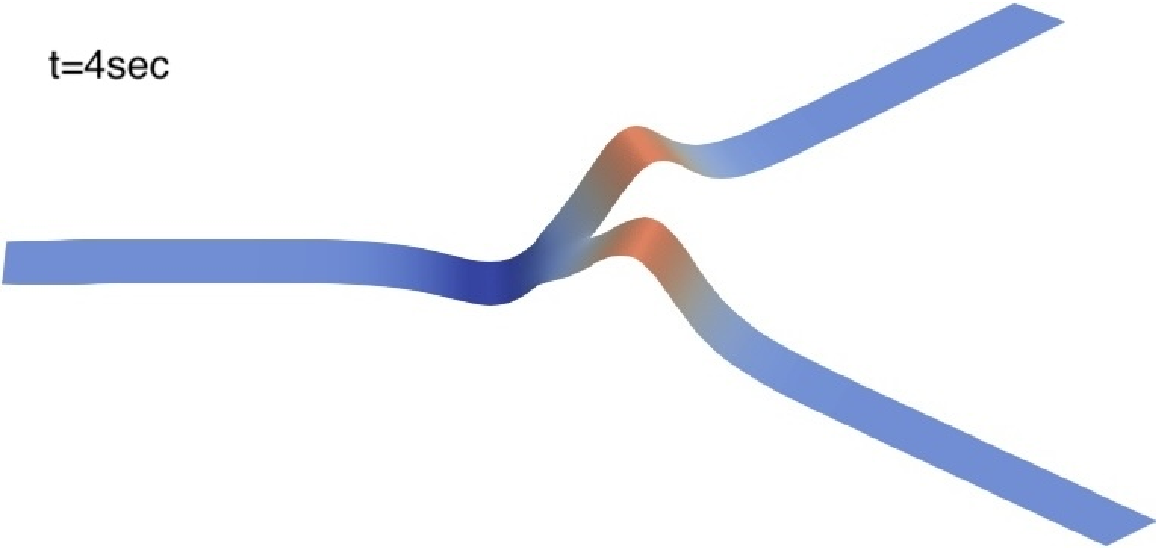}\\
\includegraphics[width=0.4\columnwidth]{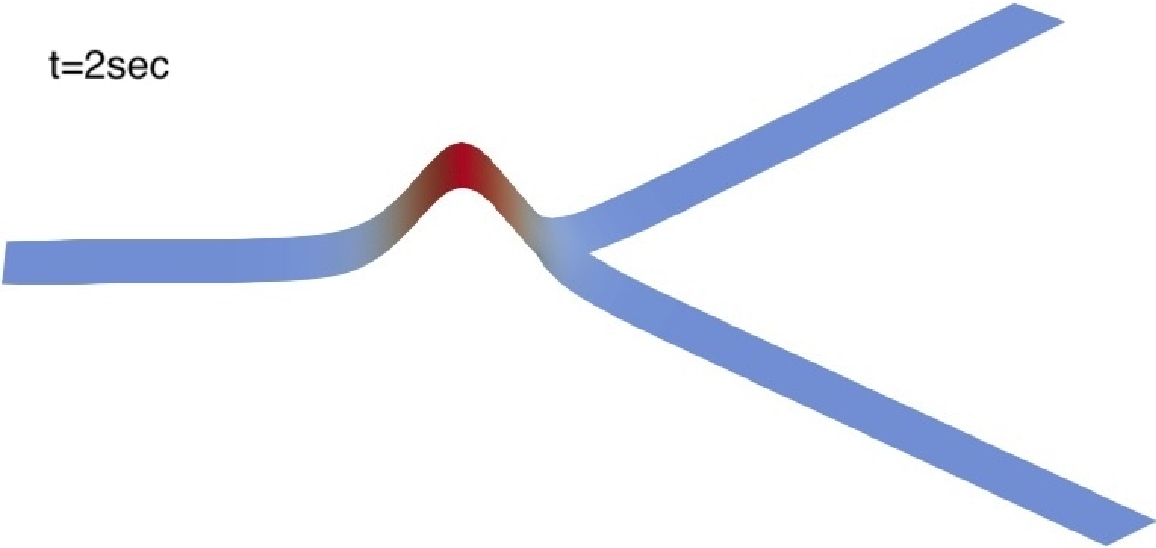}
\includegraphics[width=0.4\columnwidth]{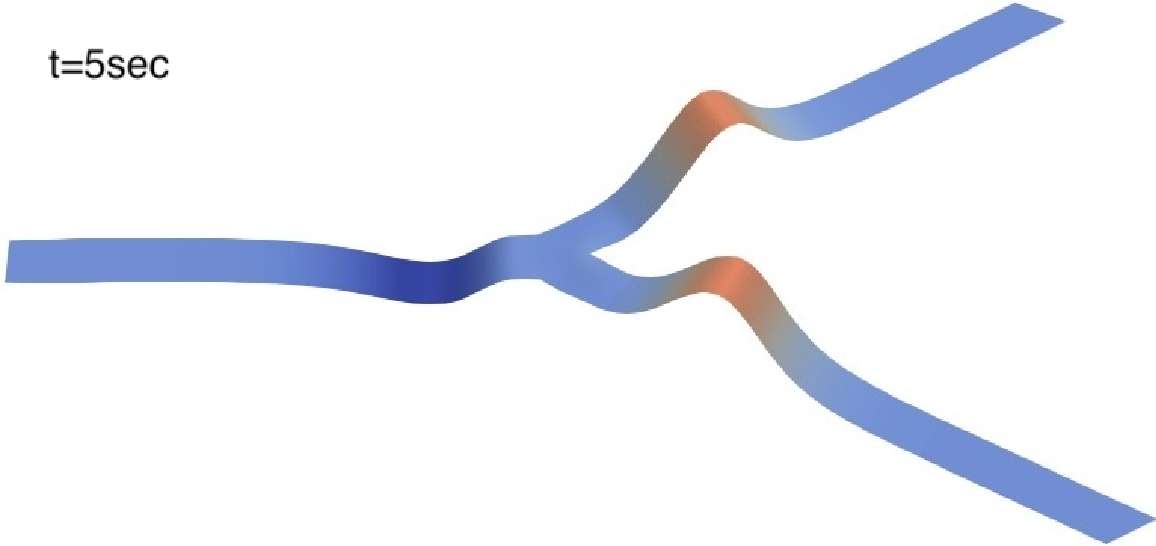}\\
\includegraphics[width=0.4\columnwidth]{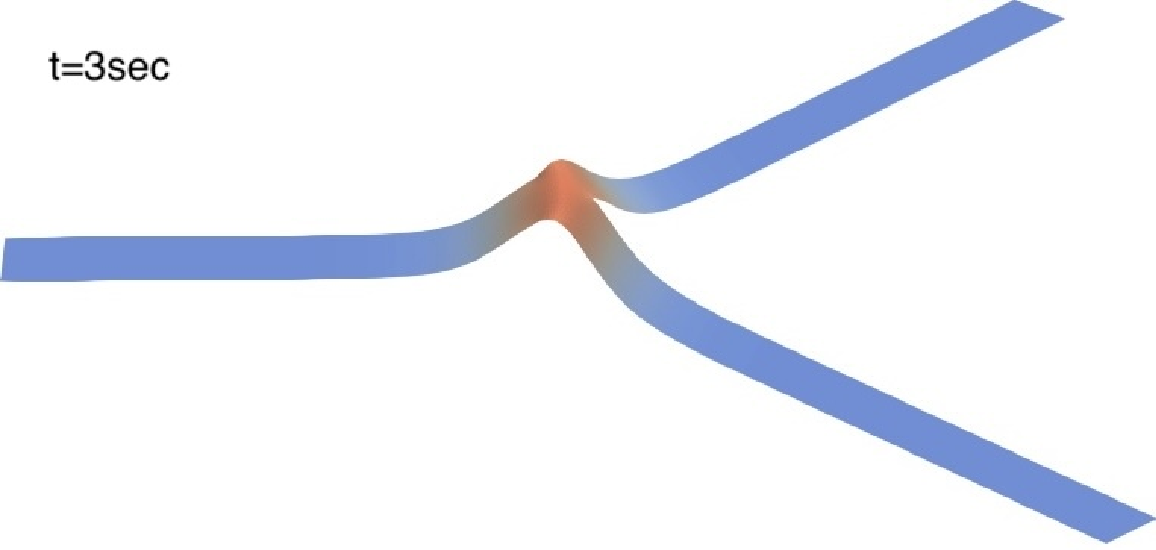}
\includegraphics[width=0.4\columnwidth]{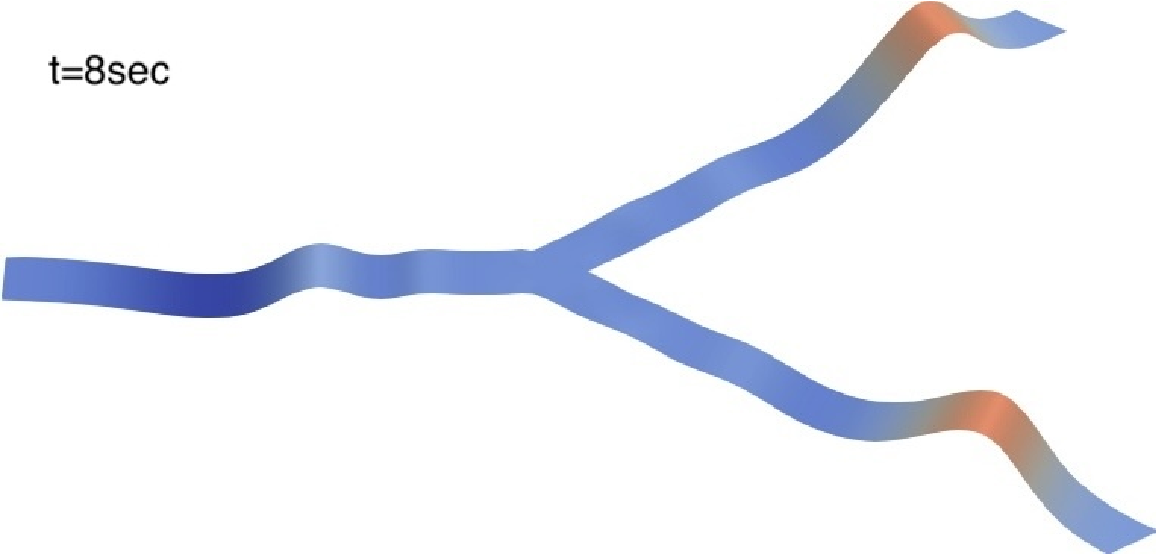}
\caption{Scattering of solitary wave at a Y-junction. Bona-Smith system with $\theta^2=1$}
\label{fig:junc}
\end{figure}
\begin{figure}[t]
\centering
\includegraphics[width=0.4\columnwidth]{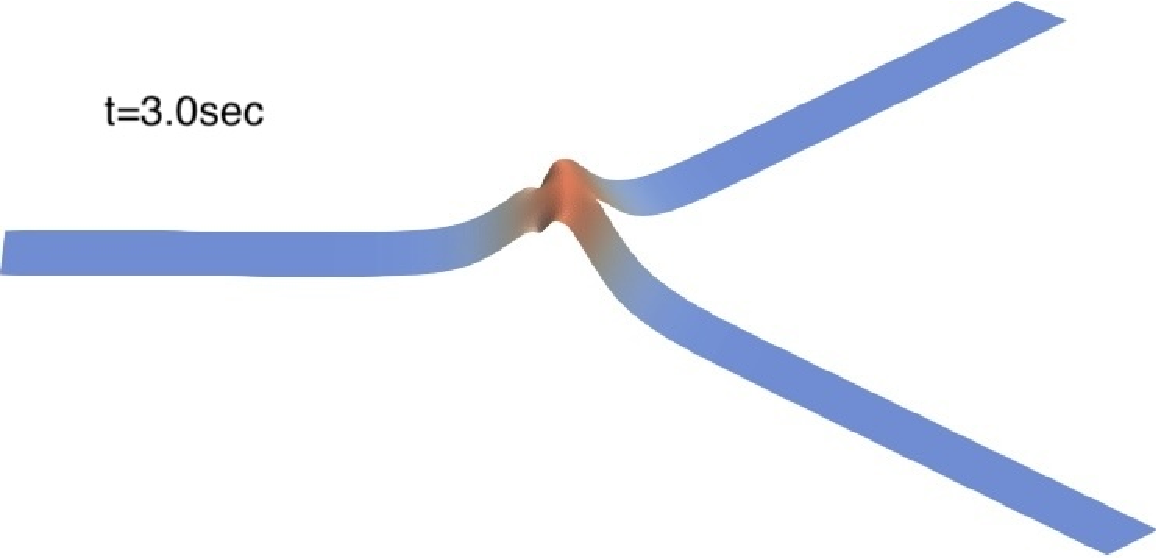}
\includegraphics[width=0.4\columnwidth]{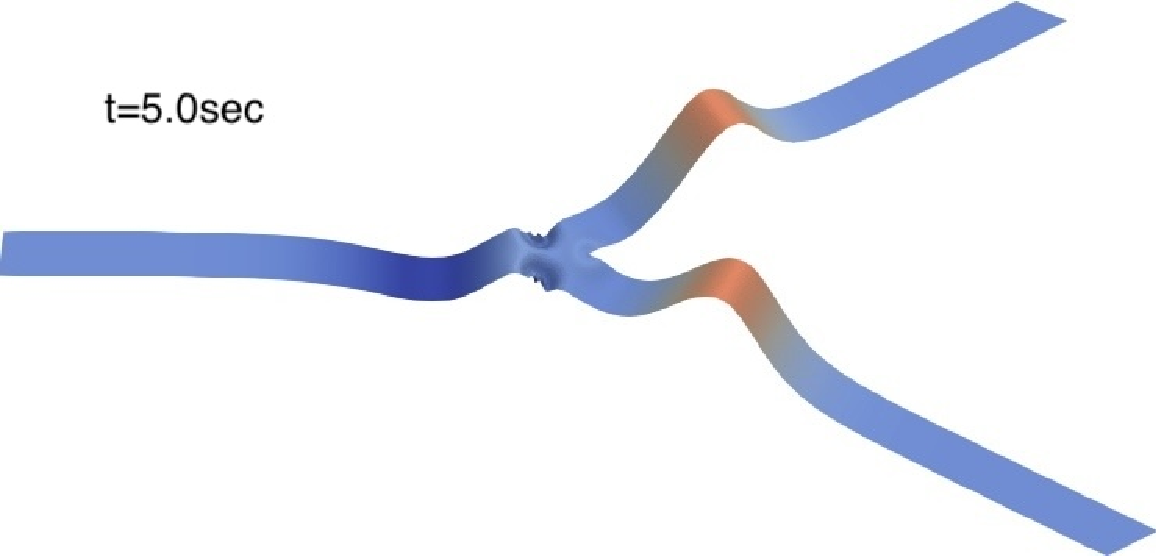}\\
\includegraphics[width=0.4\columnwidth]{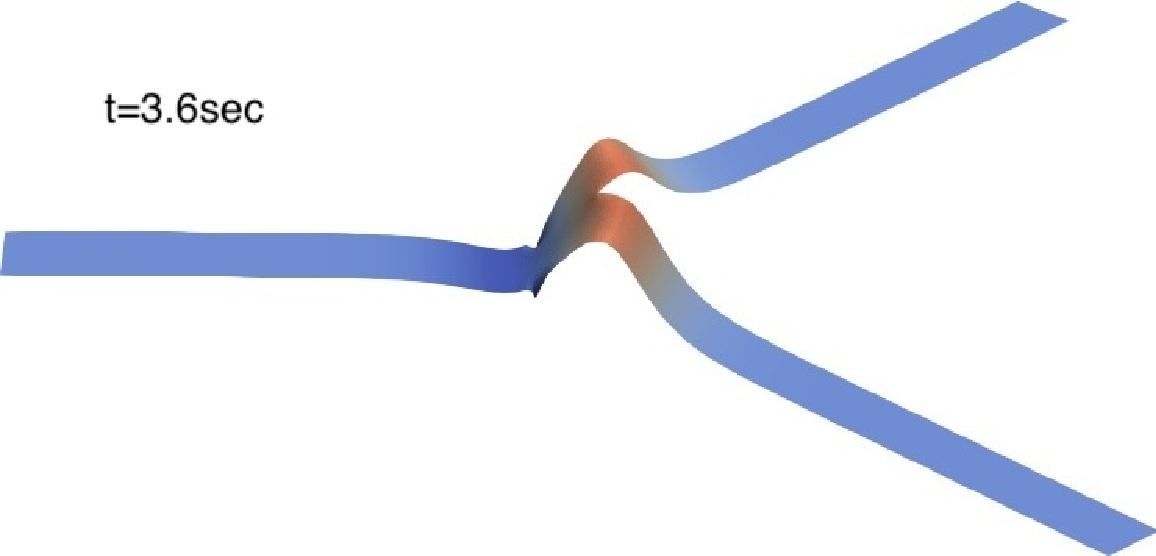}
\includegraphics[width=0.4\columnwidth]{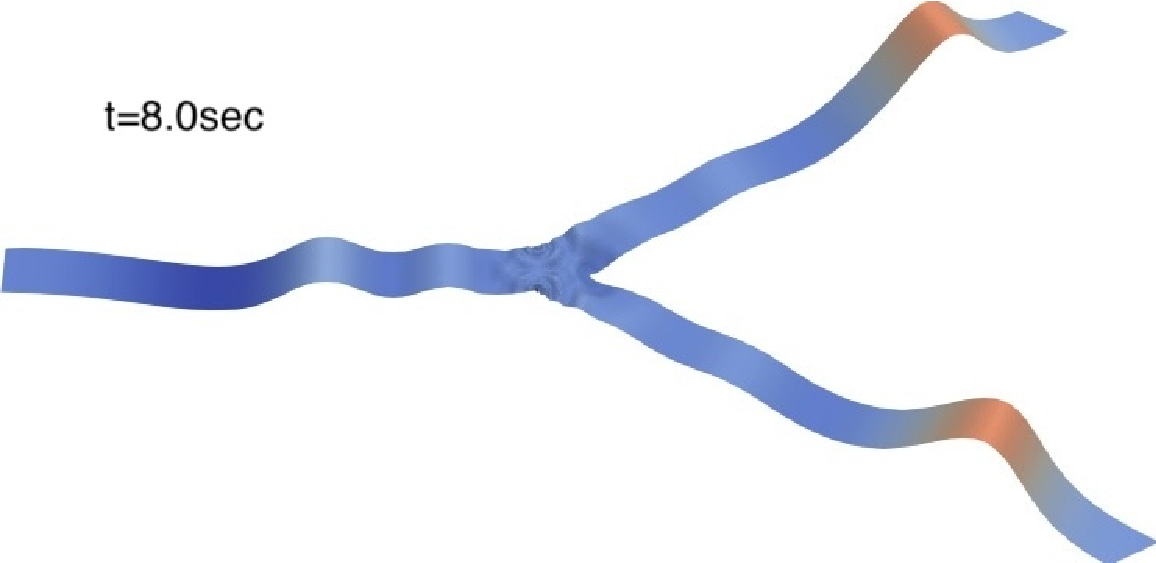}
\caption{Scattering of solitary wave at a Y-junction. BBM-BBM system with $\theta^2=2/3$}
\label{fig:bbmbbmj}
\end{figure}

\begin{figure}[t]
\centering
\includegraphics[width=1\columnwidth]{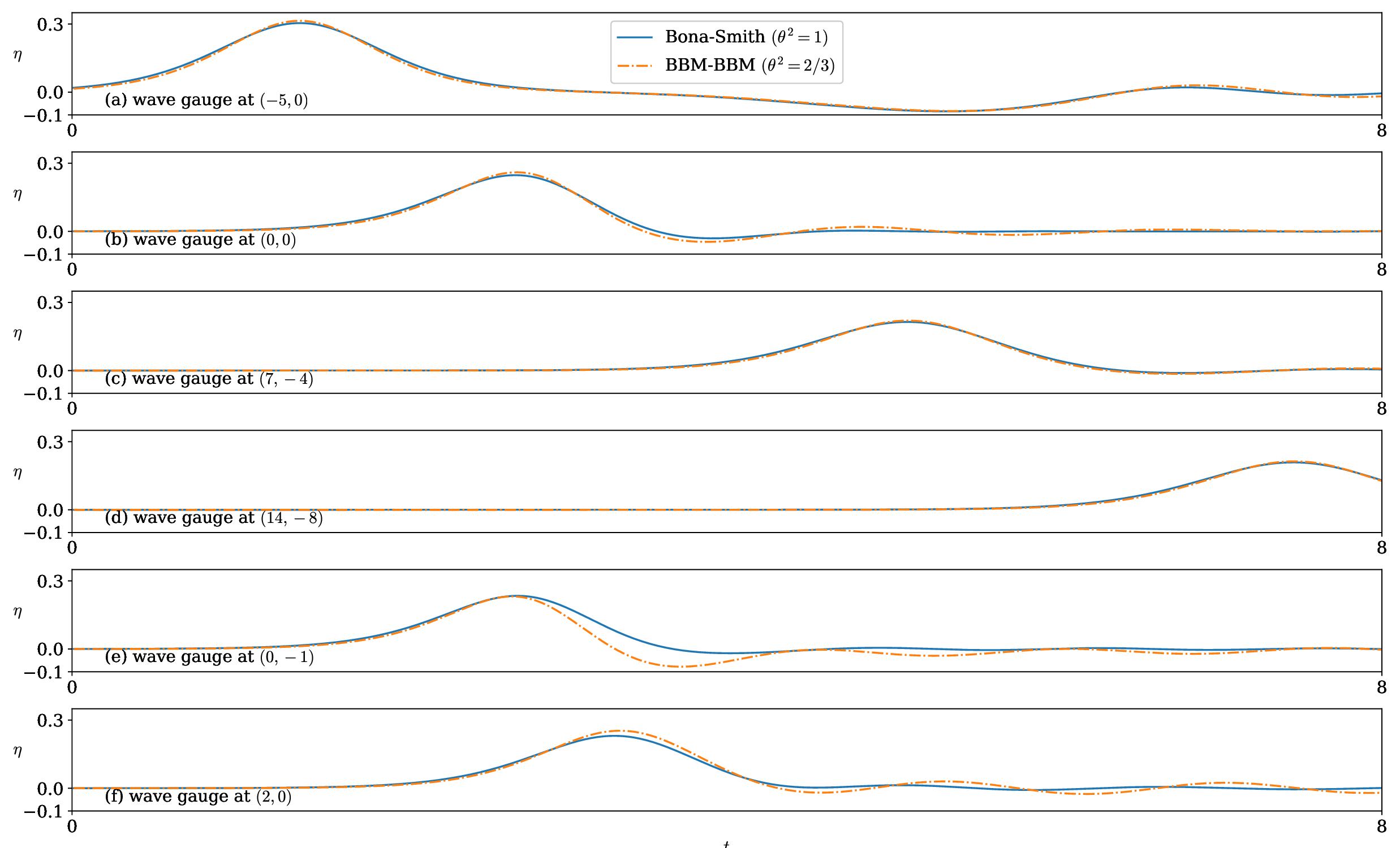}
\caption{Scattering of solitary wave at a Y-junction: Solution recorded at wave gauges}
\label{fig:jgauges}
\end{figure}

Specifically, we define the domain $\Omega$ as a polygon with vertices at $(-20.0, -1.0)$, $(0.0, -1.0)$, $(20.0, -12.5)$, $(22.0, -11.5)$, $(2.0, 0.0)$, $(22.0, 11.5)$, $(20.0, 12.5)$, $(0.0, 1.0)$, and $(-20.0, 1.0)$, as illustrated in Figure \ref{fig:domainj}, and a horizontal bottom with $D=1m$. We numerically solve problem (\ref{eq:BS2}) using a new conservative method with $\theta^2=1$, $g=9.81m/s^2$, and an initial condition of a numerical solitary wave with $c_s=3.6$ ($A\approx 0.3~m$). We also recorded the solution at six wave gauges located at $(-5,0)$, $(0,0)$, $(7,-4)$, $(14,-8)$, and at the corners $(0,-1)$ and $(2,0)$ represented by the red dots in Figure \ref{fig:domainj}. In this experiment, we employed $\mathcal{P}_h^2$ elements with a regular triangulation of $\Omega$ consisting of $19,036$ triangles. The maximum edge length recorded was $h\approx 0.2$. We also used $\Delta t=0.01$. The mass and energy remained constant (up to the digits shown) and were equal to $\mathcal{M}=2.9532623707529$ and $\mathcal{E}=6.50896080026$, respectively. 

Figure \ref{fig:junc} presents the numerical solution at times $t=0$, $2$, $3$, $4$, $5$, and $8$. We observed that the solution remained smooth at all times. Figure \ref{fig:jgauges} presents the solution recorded at the six wave gauges. The solitary wave travels unchanged until it reaches the junction corners $(0,-1)$ and $(0,1)$, where a significant portion of the solitary wave is reflected back as a dispersive wave train with a leading negative excursion. The remaining incident wave spreads into the two new directions due to diffraction. Upon interacting with the corner at $(2,0)$, it splits into two symmetric waves propagating along the connected channels. The maximum wave height recorded at the gauge $(14,-8)$ (at the far left-end of the channel) was $A\approx 0.21$, indicating that each of the transmitted waves is approximately two-thirds the size of the original wave. Similar results were observed in similar experiments in \cite{NDS2012}.

It is worth mentioning that we repeated this experiment with the BBM-BBM system ($\theta^2=2/3$), where $c=0$. In this experiment we generated a new solitary wave for the BBM-BBM system with the same speed as the Bona-Smith system. Although the two solitary waves are very similar, there are small differences. The lower regularity of the solutions in this case led to a hydraulic jump phenomenon at the corners of the junction, as presented in Figure \ref{fig:bbmbbmj}. This phenomenon can also be observed when solving the non-dispersive shallow water equations. Nevertheless, our numerical solution remained stable; mass, vorticity, and energy were conserved with values $\mathcal{M}=2.834206328927$, $\mathcal{V}=O(10^{-18})$, and $\mathcal{E}=6.1508487223870$, respectively. Furthermore, no spurious oscillations were observed, apart from significant reflections generated by the corners. In Figure \ref{fig:jgauges}, we observe that the solutions of the two systems match at the wave gauges in the interior of the domain. However, the gauges at the corners reveal the reflections generated in the numerical solution of the BBM-BBM system. Note that the proposed numerical method does not utilize limiters or artificial dissipative techniques; therefore, the stability observed in these experiments can be attributed primarily to the conservation properties of the new scheme.

It is noteworthy that the conservative Galerkin method performs exceptionally well in this specific experiment. However, further testing is needed for more general scenarios where wave-breaking occurs. In such cases, other numerical methods, such as discontinuous Galerkin methods and finite volume methods, are known to perform effectively (cf. e.g., \cite{DKM2011,PZR2021,KD2013,KDNS2012,BGSM2011,BDEFG2021}).

\section{Conclusions}\label{sec:conclusions}

In this work, our focus was on the Bona-Smith family of Boussinesq systems with variable bottom topography in bounded domains with slip-wall boundary conditions. We established the existence and uniqueness of smooth solutions to the corresponding initial-boundary value problem. This family of systems is specifically designed to preserve three fundamental for long waves invariants: mass, vorticity, and energy. Additionally, we developed a novel numerical method that maintains these three invariants. Spatial discretization is achieved through a modified Galerkin/finite element method, applied to both the equation of free-surface and the velocity potential, and implemented using a mixed formulation. Temporal discretization relies on explicit, relaxation Runge-Kutta methods, which are tailored to preserve the energy of the system. Mass conservation is intrinsic, due to its first-order functional relation to the free-surface elevation. The conservation of vorticity is ensured through the formulation of the problem using the velocity potential.

To assess the applicability of both the mathematical and numerical models, we conducted a series of numerical experiments. These experiments indicate that the numerical solutions converge with an optimal rate in space to both the surface elevation and the velocity potential. Furthermore, we validated the numerical solutions against laboratory data in three classical scenarios: the reflection of shoaling solitary waves by a vertical wall, the interaction of a solitary wave with a vertical cylinder and the interaction of a periodic wave-train with a submerged bar. In all cases, the agreement between the numerical and experimental data was satisfactory. Finally, we studied the scattering of a solitary wave by a Y-junction. This problem is especially challenging because it involves re-entrant corners. Overall, the stability and the accuracy of the new numerical model indicate excellent performance in simulations involving long waves of small amplitude even in domains with complicated geometric and bottom features.

\section*{Appendix: Derivation of the $abcd$-Boussinesq systems}

For the sake of self-containment we present in this appendix a derivation of the new $abcd$-Boussinesq systems. We consider $D(\bx)=D_0+D_b(\bx)$ the depth measured from the zero-level of the undisturbed free-surface of the water, where $D_0$ is a characteristic (mean) depth and $D_b$ the smooth variations of the bottom. If $\lambda_0$ is a typical wavelength, $a_0$ a typical wave height, and $d_0$ a typical order of bottom topography variations, then we consider the non-dimensional variables 
$\tilde{\bx}=\bx/\lambda_0$, $\tilde{z}=z/D_0$, $\tilde{t}= t c_0/\lambda_0$, 
$\tilde{\bu}=\bu D_0/(a_0c_0)$, $\tilde{w}=w \lambda_0/(a_0c_0)$, $\tilde{\eta}=\eta/a_0$, $\tilde{D}_b=D_b/d_0$, and $\tilde{p}=p(\rho g D_0)$,
where $c_0=\sqrt{gD_0}$ is the linear speed of propagation. As usual, we denote
$\varepsilon = a_0/D_0$, $\sigma=D_0/\lambda_0$, $\beta=d_0/D_0$, which are all assumed to be very small parameters.
Note that the scaled depth will be  $\tilde{D}=1+\beta \tilde{D}_b$ assuming a slowly varying bottom with $\beta\ll 1$.

Using these dimensionless variables, we write Euler's equations in non-dimensional form:
\begin{align}
& \tilde{\nabla} \cdot \tilde{\bu}+\tilde{w}_{\tilde{z}}=0\ , \label{eq:eul1}\\
&\varepsilon \tilde{\bu}_{\tilde{t}}+\varepsilon^2 [(\tilde{\bu}\cdot\tilde{\nabla})\tilde{\bu}+\tilde{w}\tilde{\bu}_{\tilde{z}}]+\tilde{\nabla}\tilde{p}=0\ ,  \label{eq:eul2}\\
&\varepsilon\sigma^2 \tilde{w}_{\tilde{t}}+\varepsilon^2\sigma^2 [\tilde{\bu}\cdot \tilde{\nabla} \tilde{w}+\tilde{w}\tilde{w}_{\tilde{z}}]+\tilde{p}_{\tilde{z}}=-1\ ,  \label{eq:eul3}
\end{align}
for $-\tilde{D}<\tilde{z}<\varepsilon\tilde{\eta}$. The irrotationality condition is written as:
\begin{align}
&\tilde{\nabla}\times \tilde{\bu}=0\ , \label{eq:eul4}\\
&\bu_z-\sigma^2\tilde{\nabla}\tilde{w}=0\ ,   \label{eq:eul5}
\end{align}
for $-\tilde{D}<\tilde{z}<\varepsilon\tilde{\eta}$. The boundary conditions on the free surface and the bottom can be expressed as
\begin{align}
& \teta_t+\varepsilon(\tilde{\bu}\cdot\tilde{\nabla}\teta)-\tilde{w}=0,\quad \tilde{p}=\frac{p_{\text{atm}}}{\rho g D_0}\quad \text{on}\quad \tilde{z}=\varepsilon\teta\ ,  \label{eq:eul6}\\
& \tilde{\zeta}_{\tilde{t}}+ \tilde{\bu}\cdot \tilde{\nabla}\tilde{D}+\tilde{w}=0\quad \text{on}\quad\tilde{z}=-\tilde{D}\ .  \label{eq:eul7}
\end{align}

Integrating the mass equation (\ref{eq:eul1}) between $-\tilde{D}$ and $\varepsilon\tilde{\eta}$, we obtain
\begin{equation}
\tilde{w}(\varepsilon\teta)-\tilde{w}(-\tilde{D})=-\int_{-\tilde{D}}^{\varepsilon\teta}\tilde{\nabla}\cdot\tilde{\bu}~d\tilde{z}\ .
\end{equation}
From the boundary conditions (\ref{eq:eul6}) and (\ref{eq:eul7}) we obtain the equation
\begin{equation}\label{eq:massper}
\teta_{\tilde{t}}+\tilde{\nabla}\cdot[(\tilde{D}+\varepsilon\tilde{\eta})\tilde{\bu}_a]=0\ ,
\end{equation}
where 
\begin{equation}\label{eq:dpav}
\tilde{\bu}_a(\tilde{\bx},\tilde{t})=\frac{1}{\tilde{D}+\varepsilon\teta}\int_{-\tilde{D}}^{\varepsilon\teta}\tilde{\bu}~d\tilde{z}\ ,
\end{equation}
denotes the depth-averaged horizontal velocity of the fluid. 

Integrating (\ref{eq:eul1}) from $-\tilde{D}$ to $\tilde{z}$, and using (\ref{eq:eul7}) we have
\begin{equation}\label{eq:eul8}
\tilde{w}=-\tilde{\bu}\cdot \tilde{\nabla}\tilde{D}-\int_{-\tilde{D}}^{\tilde{z}}\tilde{\nabla}\cdot\tilde{\bu}\ .
\end{equation}
After integration of (\ref{eq:eul5}) over $(0,\tilde{z})$, we have
$
\tilde{\bu}(\tilde{\bx},\tilde{z},\tilde{t})=\tilde{\bu}_0(\tilde{\bx},\tilde{t})+O(\sigma^2)\ ,
$
where $\tilde{\bu}_0(\tilde{\bx},\tilde{t})\doteq \tilde{\bu}(\tilde{\bx},0,\tilde{t})$. Thus, we have 
\begin{equation}\label{eq:eul9}
\tilde{\bu}(\tilde{\bx},\tilde{z},\tilde{t})=\tilde{\bu}_0(\tilde{\bx},\tilde{t})+O(\sigma^2)\ .
\end{equation}
Substitution of (\ref{eq:eul8}) into the irrotationality condition (\ref{eq:eul5}) and using (\ref{eq:eul9}) yields
\begin{equation}\label{eq:eul10}
\tilde{\bu}_{\tilde{z}}=-\sigma^2(\tilde{z}+1) \tilde{\nabla}(\tilde{\nabla}\cdot \tilde{\bu}_0)+O(\sigma^4,\varepsilon\sigma^2,\beta\sigma^2)\ .
\end{equation}
Integration of (\ref{eq:eul10}) from $0$ to $\tilde{z}$ implies
\begin{equation}\label{eq:eul11}
\tilde{\bu}=\tilde{\bu}_0-\sigma^2(\tilde{z}+\frac{\tilde{z}^2}{2})\tilde{\nabla}(\tilde{\nabla}\cdot \tilde{\bu}_0)+O(\sigma^4,\varepsilon\sigma^2,\beta\sigma^2)\ .
\end{equation}
Equation (\ref{eq:eul8}) using (\ref{eq:eul9}) and differentiating with respect to $t$ becomes
\begin{equation}\label{eq:eul12}
\tilde{w}=-\tilde{\nabla}\cdot(\tilde{D}\tilde{\bu}_0)-\tilde{z}\tilde{\nabla}\cdot \tilde{\bu}_0+O(\sigma^2)\ ,
\end{equation}
and differentiation with respect to $t$ yields
\begin{equation}\label{eq:eul13}
\tilde{w}_{\tilde{t}}=-\tilde{\nabla}\cdot(\tilde{D}\tilde{\bu}_0)_{\tilde{t}}-\tilde{z}\tilde{\nabla}\cdot {\tilde{\bu}_0}_{\tilde{t}}+O(\sigma^2)\ .
\end{equation}
We set $\tilde{P}=\tilde{p}-p_{\text{atm}}/\rho g D_0$ (so as $\tilde{\nabla} \tilde{P}=\tilde{\nabla} \tilde{p}$ and $\tilde{P}(\varepsilon\teta)=0$).
After integration of (\ref{eq:eul3}) from $\tilde{z}$ to $\varepsilon\tilde{\eta}$ we obtain the non-hydrostatic approximation for the pressure:
\begin{equation}\label{eq:eul14}
\tilde{P}=\varepsilon\sigma^2(\tilde{z}+\frac{\tilde{z}^2}{2})\tilde{\nabla}\cdot{\tilde{\bu}_0}_{\tilde{t}}+\varepsilon\tilde{\eta}-\tilde{z}+O(\varepsilon\sigma^4,\varepsilon^2\sigma^2,\varepsilon\beta\sigma^2)\ .
\end{equation}
Substitution of (\ref{eq:eul11}), (\ref{eq:eul12}) and (\ref{eq:eul14}) into (\ref{eq:eul2}) leads to the approximation of momentum conservation
\begin{equation}\label{eq:eul15}
{\tilde{\bu}_0}_{\tilde{t}}+\tilde{\nabla}\tilde{\eta}+\varepsilon(\tilde{\bu}_0\cdot\tilde{\nabla})\tilde{\bu}_0=O(\sigma^4,\varepsilon\sigma^2,\beta\sigma^2)\ .
\end{equation}
Equation (\ref{eq:eul11}) using (\ref{eq:dpav}) becomes
\begin{equation}\label{eq:eul17}
\tilde{\bu}_0=\tilde{\bu}_a-\frac{\sigma^2}{3}\tilde{\nabla}(\tilde{\nabla}\cdot {\tilde{\bu}_a})+O(\sigma^4,\varepsilon\sigma^2,\beta\sigma^2)\ .
\end{equation}
Since $\tilde{\bu}=\tilde{\bu}_a+O(\sigma^2)$, it is implied that $\tilde{\nabla}\times \tilde{\bu}_a=O(\sigma^2)$, which yields that $({\tilde{\bu}_a}\cdot\tilde{\nabla}){\tilde{\bu}_a}=\frac{1}{2}\tilde{\nabla}|\tilde{\bu}_a|^2+O(\sigma^2)$.
Subsequently, equation (\ref{eq:eul15}) yields the momentum equation
\begin{equation}\label{eq:eul19}
{\tilde{\bu}_a}_{\tilde{t}}+\tilde{\nabla}\tilde{\eta}+\frac{\varepsilon}{2}\tilde{\nabla}|\tilde{\bu}_a|^2-\frac{\sigma^2}{3}\tilde{\nabla}(\tilde{\nabla}\cdot{\tilde{\bu}_a}_{\tilde{t}})=O(\sigma^4,\varepsilon\sigma^2,\beta\sigma^2)\ .
\end{equation}
Note that equations (\ref{eq:massper})-(\ref{eq:eul19}) have been also studied in \cite{KMS2020}. Evaluating the horizontal velocity (\ref{eq:eul11}) at depth 
$\tilde{z}_\theta=-\tilde{D}+\theta(\varepsilon\tilde{\eta}+\tilde{D})$ for $0\leq \theta\leq 1$,
where $\sigma^2\tilde{z}_\theta=(\theta-1)\sigma^2+O(\varepsilon\sigma^2,\beta\sigma^2)$, and using (\ref{eq:eul17}) we obtain
\begin{equation}\label{eq:eul20}
\tilde{\bu}_\theta=\tilde{\bu}_a-\frac{\sigma^2}{2}\left(\theta^2-\frac{1}{3}\right)\tilde{\nabla}(\tilde{\nabla}\cdot\tilde{\bu}_a)+O(\sigma^4,\varepsilon\sigma^2,\beta\sigma^2)\ ,
\end{equation}
and equivalently
\begin{equation}\label{eq:eul21}
\tilde{\bu}_a=\tilde{\bu}_\theta+\frac{\sigma^2}{2}\left(\theta^2-\frac{1}{3}\right)\tilde{\nabla}(\tilde{\nabla}\cdot\tilde{\bu}_\theta)+O(\sigma^4,\varepsilon\sigma^2,\beta\sigma^2)\ .
\end{equation}
Substituting (\ref{eq:eul21}) into (\ref{eq:massper}) and (\ref{eq:eul19}) we obtain the system
\begin{align}
&\teta_{\tilde{t}}+\tilde{\nabla}\cdot[(\tilde{D}+\varepsilon\tilde{\eta})\tilde{\bu}_\theta]+\frac{\sigma^2}{2}\left(\theta^2-\frac{1}{3}\right)\tilde{\nabla}\cdot\tilde{\nabla}(\tilde{\nabla}\cdot\tilde{\bu}_\theta)=O(\sigma^4,\varepsilon\sigma^2,\beta\sigma^2)\ , \label{eq:eul22}\\
&{\tilde{\bu}_\theta}_{\tilde{t}}+\tilde{\nabla}\tilde{\eta}+\frac{\varepsilon}{2}\tilde{\nabla}|\tilde{\bu}_\theta|^2+\frac{\sigma^2}{2}(\theta^2-1)\tilde{\nabla}(\tilde{\nabla}\cdot{\tilde{\bu}_\theta}_{\tilde{t}})=O(\sigma^4,\varepsilon\sigma^2,\beta\sigma^2)\ . \label{eq:eul23}
\end{align}
Observe that from (\ref{eq:eul22}) and (\ref{eq:eul23}) we obtain
\begin{equation}\label{eq:eul24}
\tilde{\nabla}\cdot \tilde{\bu}_\theta=-\teta_{\tilde{t}}+O(\varepsilon,\beta,\sigma^2)\qquad
\text{and}\qquad
{\tilde{\bu}_{\theta}}_{\tilde{t}}=-\tilde{\nabla}\tilde{\eta}+O(\varepsilon,\sigma^2)\ . 
\end{equation}
Using the classical BBM-trick \cite{BBM1972} with (\ref{eq:eul24}) and taking arbitrary $\nu,\mu\in \mathbb{R}$ we write
\begin{equation}\label{eq:rel1}
\begin{aligned}
&\tilde{\nabla}\cdot\tilde{\nabla}(\tilde{\nabla}\cdot\tilde{\bu}_\theta)=\mu \tilde{\nabla}\cdot\tilde{\nabla}(\tilde{\nabla}\cdot\tilde{\bu}_\theta)-(1-\mu)\tilde{\nabla}\cdot\tilde{\nabla}~\teta_{\tilde{t}}+O(\varepsilon,\beta,\sigma^2)\quad \text{and}\\
&\tilde{\nabla}(\tilde{\nabla}\cdot{\tilde{\bu}_\theta}_{\tilde{t}})=-\nu\tilde{\nabla}(\tilde{\nabla}\cdot \tilde{\nabla}\tilde{\eta})+(1-\nu)\tilde{\nabla}(\tilde{\nabla}\cdot{\tilde{\bu}_\theta}_{\tilde{t}})+O(\varepsilon,\sigma^2)\ .
\end{aligned} 
\end{equation}
Substituting the relations (\ref{eq:rel1}) into the system (\ref{eq:eul22})--(\ref{eq:eul23}) we write the general $abcd$-Boussinesq system
\begin{align}
& \teta_{\tilde{t}}+\tilde{\nabla}\cdot[(\tilde{D}+\varepsilon\tilde{\eta})\tilde{\bu}_\theta]-\sigma^2 \tilde{\nabla}\cdot[a\tilde{\nabla}(\tilde{\nabla}\cdot\tilde{\bu}_\theta)+b\tilde{\nabla}\teta_{\tilde{t}}]=O(\sigma^4,\varepsilon\sigma^2,\beta\sigma^2)\ , \label{eq:eul26}\\
& {\tilde{\bu}_\theta}_{\tilde{t}}+\tilde{\nabla}\tilde{\eta}+\frac{\varepsilon}{2}\tilde{\nabla}|\tilde{\bu}_\theta|^2-\sigma^2 \tilde{\nabla}[ c \tilde{\nabla}\cdot \tilde{\nabla}\teta+d\tilde{\nabla}\cdot{\tilde{\bu}_\theta}_{\tilde{t}}]=O(\sigma^4,\varepsilon\sigma^2,\beta\sigma^2)\ .\label{eq:eul27}
\end{align}
where $a,b,c,d$ are given by the formulas (\ref{eq:abcdcoefs}). Furthermore, using the approximations 
$$
\begin{array}{ll}
\tilde{\nabla}(\tilde{\nabla}\cdot\tilde{\bu}_\theta)=\tilde{\nabla}(\tilde{D}^3\tilde{\nabla}\cdot\tilde{\bu}_\theta)+O(\varepsilon,\beta),
&\tilde{\nabla}\teta_{\tilde{t}}=\tilde{D}^2\tilde{\nabla}\teta_{\tilde{t}}+O(\varepsilon,\beta)\ , \\
\tilde{\nabla}\cdot \tilde{\nabla}\teta=\tilde{\nabla}\cdot (\tilde{D}^2 \tilde{\nabla}\teta)+O(\varepsilon,\beta), 
&\tilde{\nabla}\cdot{\tilde{\bu}_\theta}_{\tilde{t}}=\tilde{\nabla}\cdot (\tilde{D}^2{\tilde{\bu}_\theta}_{\tilde{t}})+O(\varepsilon,\beta)\ .
\end{array}
$$
we write the general $abcd$-Boussinesq system as
\begin{align}
& \teta_{\tilde{t}}+\tilde{\nabla}\cdot[(\tilde{D}+\varepsilon\tilde{\eta})\tilde{\bu}_\theta]-\sigma^2 \tilde{\nabla}\cdot[a\tilde{\nabla}(\tilde{D}^3\tilde{\nabla}\cdot\tilde{\bu}_\theta)+b\tilde{D}^2\tilde{\nabla}\teta_{\tilde{t}}]=O(\sigma^4,\varepsilon\sigma^2,\beta\sigma^2)\ , \label{eq:eul28}\\
& {\tilde{\bu}_\theta}_{\tilde{t}}+\tilde{\nabla}\tilde{\eta}+\frac{\varepsilon}{2}\tilde{\nabla}|\tilde{\bu}_\theta|^2-\sigma^2 \tilde{\nabla} [ c \tilde{\nabla}\cdot (\tilde{D}^2 \tilde{\nabla}\teta)+d\tilde{\nabla}\cdot (\tilde{D}^2{\tilde{\bu}_\theta}_{\tilde{t}})]=O(\sigma^4,\varepsilon\sigma^2,\beta\sigma^2)\ .\label{eq:eul29}
\end{align}

The justification of these systems against the Euler equations was presented in \cite{IKM2023} for large time intervals of order $1/\varepsilon^2$. Discarding the high-order terms in (\ref{eq:eul28})--(\ref{eq:eul29}) and transforming the remaining equations back to dimensional variables we obtain the system (\ref{eq:abcd}).

\section*{Declarations}

\subsection*{Conflict of interest} 

On behalf of all authors, the corresponding author states that there is no conflict of interest.

\subsection*{Data availability}

Data sharing not applicable to this article as no datasets were generated or analysed during
the current study.


\begin{thebibliography}{10}

\bibitem{Adamy2011}
K.~Adamy.
\newblock {Existence of solutions for a {Boussinesq} system on the half line
  and on a finite interval}.
\newblock {\em Discrete Contin. Dyn. Syst. B}, 29:25--49, 2011.

\bibitem{ABM2019}
D.~Ambrose, J.~Bona, and T.~Milgrom.
\newblock Global solutions and ill-posedness for the {K}aup system and related
  {B}oussinesq systems.
\newblock {\em Indiana University Mathematics Journal}, 68:1173--1198, 2019.

\bibitem{ADM2010ii}
D.~Antonopoulos, V.~Dougalis, and D.~Mitsotakis.
\newblock {G}alerkin approximations of periodic solutions of {B}oussinesq
  systems.
\newblock {\em Bull. Greek Math. Soc}, 57:13--30, 2010.

\bibitem{ADM2010}
D.~Antonopoulos, V.~Dougalis, and D.~Mitsotakis.
\newblock {Numerical solution of {Boussinesq} systems of the Bona-Smith
  family}.
\newblock {\em Appl. Numer. Math.}, 30:314--336, 2010.

\bibitem{ASB1993}
J.S. Antunes~do Carmo, F.J. Seabra-Santos, and E.~Barth{\'e}lemy.
\newblock Surface waves propagation in shallow water: A finite element model.
\newblock {\em Int. J. Numer. Methods Fluids}, 16:447--459, 1993.

\bibitem{ALB2009}
M.~Antuono, V.~Liapidevskii, and M.~Brocchini.
\newblock Dispersive nonlinear shallow-water equations.
\newblock {\em Stud. Appl. Math}, 122:1--28, 2009.

\bibitem{BH2015}
A.~Behzadan and M.~Holst.
\newblock {Multiplication in Sobolev spaces, revisited}.
\newblock {\em arXiv preprint arXiv:1512.07379}, pages 1--25, 2015.

\bibitem{BB1994}
S.~Beji and J.A. Battjes.
\newblock Numerical simulation of nonlinear wave propagation over a bar.
\newblock {\em Coastal Engineering}, 23:1--16, 1994.

\bibitem{Benjamin1984}
B.~Benjamin.
\newblock Impulse, flow force and variational principles.
\newblock {\em IMA J. Appl. Math}, 32:3--68, 1984.

\bibitem{BBM1972}
B.~Benjamin, J.~Bona, and J.~Mahony.
\newblock Model equations for long waves in nonlinear dispersive systems.
\newblock {\em Philos. Trans. R. Soc. A}, 272:47--78, 1972.

\bibitem{BBF2013}
D.~Boffi, F.~Brezzi, and M.~Fortin.
\newblock {\em Mixed finite element methods and applications}.
\newblock Springer Heidelberg, 2013.

\bibitem{BC1998}
J.~Bona and M~Chen.
\newblock {A {Boussinesq} system for two--way propagation of nonlinear
  dispersive waves}.
\newblock {\em Phys. D: Nonlinear Phenom.}, 116:191--224, 1998.

\bibitem{BCS2002}
J.~Bona, M.~Chen, and J.-C. Saut.
\newblock {{Boussinesq} equations and other systems for small-amplitude long
  waves in nonlinear dispersive media. I: Derivation and linear theory}.
\newblock {\em J. Nonlinear Sci.}, 12:283--318, 2002.

\bibitem{BCS2004}
J.~Bona, M.~Chen, and J.-C. Saut.
\newblock {{Boussinesq} equations and other systems for small-amplitude long
  waves in nonlinear dispersive media. II: The nonlinear theory}.
\newblock {\em Nonlinearity}, 17:925--952, 2004.

\bibitem{BCL2005}
J.~Bona, T.~Colin, and D.~Lannes.
\newblock {Long wave approximations for water waves}.
\newblock {\em Arch. Rational Mech. Anal.}, 178:373--410, 2005.

\bibitem{BDM2007i}
J.~Bona, V.~Dougalis, and D.~Mitsotakis.
\newblock {Numerical solution of KdV--KdV systems of Boussinesq equations: I.
  The numerical scheme and generalized solitary waves}.
\newblock {\em Math. Comp. Simul.}, 74:214--228, 2007.

\bibitem{BDM2008ii}
J.~Bona, V.~Dougalis, and D.~Mitsotakis.
\newblock {Numerical solution of {B}oussinesq systems of {KdV}--{KdV} type: II.
  Evolution of radiating solitary waves}.
\newblock {\em Nonlinearity}, 21:2825--2848, 2008.

\bibitem{BS}
J.~Bona and R.~Smith.
\newblock {A model for the two--way propagation of water waves in a channel}.
\newblock {\em Math. Proc. Camb. Phil. Soc.}, 79:167--182, 1976.

\bibitem{Bous1871}
J.~Boussinesq.
\newblock Th\'{e}orie de l'intumescence liquide appel\'{e}e onde solitaire ou
  de translation se propageant dans un canal rectangulaire.
\newblock {\em CR Acad. Sci. Paris}, 72:1871, 1871.

\bibitem{Bous1872}
J.~Boussinesq.
\newblock Th\'{e}orie des ondes et des remous qui se propagent le long d'un
  canal rectangulaire horizontal, en communiquant au liquide contenu dans ce
  canal des vitesses sensiblement pareilles de la surface au fond.
\newblock {\em J. Math. Pures Appl.}, 17:55--108, 1872.

\bibitem{Brezis}
H.~Brezis.
\newblock {\em {Functional analysis, Sobolev spaces and partial differential
  equations}}, volume~2.
\newblock Springer, 2011.

\bibitem{BGSM2011}
M.-O. Bristeau, N.~Goutal, and J.~Sainte-Marie.
\newblock Numerical simulations of a non-hydrostatic shallow water model.
\newblock {\em Comput. Fluids}, 47:51--64, 2011.

\bibitem{BMSMS2015}
M.-O. Bristeau, A.~Mangeney, J.~Sainte-Marie, and N.~Seguin.
\newblock An energy-consistent depth-averaged euler system: Derivation and
  properties.
\newblock {\em Discrete Contin. Dyn. Syst. B}, 20:961--988, 2015.

\bibitem{BDEFG2021}
S.~Busto, M.~Dumbser, C.~Escalante, N.~Favrie, and S.~Gavrilyuk.
\newblock On high order ader discontinuous galerkin schemes for first order
  hyperbolic reformulations of nonlinear dispersive systems.
\newblock {\em J. Sci. Comput.}, 87:48, 2021.

\bibitem{CL2014}
A.~Castro and D.~Lannes.
\newblock Fully nonlinear long-wave models in the presence of vorticity.
\newblock {\em J. Fluid Mech.}, 759:642–--675, 2014.

\bibitem{chazel1}
F.~Chazel.
\newblock {\em Influence de la topographie sur les ondes de surface}.
\newblock Phd thesis, Universit\'{e} Bordeaux I, 2007.

\bibitem{chazel2}
F.~Chazel.
\newblock Influence of bottom topography on long water waves.
\newblock {\em ESAIM: Mod\'elisation math\'ematique et analyse num\'erique},
  41:771--799, 2007.

\bibitem{Chen1998}
M.~Chen.
\newblock Exact traveling-wave solutions to bidirectional wave equations.
\newblock {\em Int. J. Theor. Phys.}, 37:1547--1567, 1998.

\bibitem{Chen2000}
M.~Chen.
\newblock Solitary-wave and multi-pulsed traveling-wave solutions of
  {B}oussinesq systems.
\newblock {\em Appl. Anal.}, 75:213--240, 2000.

\bibitem{CDM2017}
D.~Clamond, D.~Dutykh, and D.~Mitsotakis.
\newblock Conservative modified {S}erre--{G}reen--{N}aghdi equations with
  improved dispersion characteristics.
\newblock {\em Commun. Nonlinear Sci. Numer. Simul.}, 45:245--257, 2017.

\bibitem{CDM2024}
D.~Clamond, D.~Dutykh, and D.~Mitsotakis.
\newblock {\em Variational approach to water wave modelling}.
\newblock IAHR Water Monograph Series, 2024.

\bibitem{Dodd1998}
N.~Dodd.
\newblock Numerical model of wave run-up, overtopping, and regeneration.
\newblock {\em J. Waterw. Port Coast. Ocean Eng.}, 124:73--81, 1998.

\bibitem{DDMM2007}
V.~Dougalis, A.~Duran, M.A. L\'{o}pez-Marcos, and D.~Mitsotakis.
\newblock {A numerical study of the stability of solitary waves of the
  Bona-Smith family of {Boussinesq} systems}.
\newblock {\em J. Nonlinear Sci.}, 17:569--607, 2007.

\bibitem{DM2004}
V.~Dougalis and D.~Mitsotakis.
\newblock Solitary waves of the {B}ona-{S}mith system.
\newblock In {\em Advances in scattering theory and biomedical engineering, ed.
  by D. Fotiadis and C. Massalas}, pages 286--294, 2004.

\bibitem{DM2008}
V.~Dougalis and D.~Mitsotakis.
\newblock {Theory and numerical analysis of {Boussinesq} systems: A review}.
\newblock In N.~A. Kampanis, V.~A. Dougalis, and J.~A. Ekaterinaris, editors,
  {\em Effective Computational Methods in Wave Propagation}, pages 63--110. CRC
  {P}ress, 2008.

\bibitem{DMS2009}
V.~Dougalis, D.~Mitsotakis, and J.-C. Saut.
\newblock {On initial-boundary value problems for a {Boussinesq} system of
  BBM-BBM type in a plane domain}.
\newblock {\em Discrete Contin. Dyn. Syst. B}, 23:1191--1204, 2009.

\bibitem{DMS2010}
V.~Dougalis, D.~Mitsotakis, and J.-C. Saut.
\newblock {Initial-boundary-value problems for {Boussinesq} systems of
  Bona-Smith type on a plain domain: theory and numerical analysis}.
\newblock {\em J. Sci. Comput.}, 44:109--135, 2010.

\bibitem{D2019}
V.~Duch\^{e}ne.
\newblock Rigorous justification of the {F}avrie–{G}avrilyuk approximation to
  the {S}erre–{G}reen–{N}aghdi model.
\newblock {\em Nonlinearity}, 32:3772, 2019.

\bibitem{DC2014}
D.~Dutykh and D.~Clamond.
\newblock Efficient computation of steady solitary gravity waves.
\newblock {\em Wave Motion}, 51:86--99, 2014.

\bibitem{DKM2011}
D.~Dutykh, Th. Katsaounis, and D.~Mitsotakis.
\newblock {Finite volume schemes for dispersive wave propagation and run-up}.
\newblock {\em J. Comp. Phys.}, 230:3035--3061, 2011.

\bibitem{EG2010}
A.~Ern and J.-L. Guermond.
\newblock {\em {Theory and practice of finite elements}}, volume 159.
\newblock Springer New York, NY, 2010.

\bibitem{FG2017}
N.~Favrie and S.~Gavrilyuk.
\newblock A rapid numerical method for solving {S}erre--{G}reen--{N}aghdi
  equations describing long free surface gravity waves.
\newblock {\em Nonlinearity}, 30:2718, 2017.

\bibitem{FP2005}
A.S. Fokas and B.~Pelloni.
\newblock Boundary value problems for {Boussinesq} type systems.
\newblock {\em Math. Phys. Anal. Geom.}, 8:59--96, 2005.

\bibitem{gatica}
G.~Gatica.
\newblock {\em A simple introduction to the Mixed Finite Element Method:
  {T}heory and applications}.
\newblock Springer Briefs in Mathematics. Springer, Heidelberg, 2014.

\bibitem{GR1986}
V.~Girault and P.-A. Raviart.
\newblock {\em Finite element methods for {N}avier-{S}tokes equations},
  volume~5.
\newblock Springer-Verlag Berlin Heidelberg, 1986.

\bibitem{GKW2000}
M.~Gobbi, J.~Kirby, and G.~Wei.
\newblock A fully nonlinear {B}oussinesq model for surface waves. {P}art 2.
  {E}xtension to $o(kh)^4$.
\newblock {\em J. Fluid Mech.}, 405:181--210, 2000.

\bibitem{GN1976}
A.~Green and P.~Naghdi.
\newblock {A derivation of equations for wave propagation in water of variable
  depth}.
\newblock {\em J. Fluid Mech.}, 78:237--246, 1976.

\bibitem{Grisvard}
P.~Grisvard.
\newblock Quelques propriet\'{e}s des espaces de {S}obolev utiles dans l'
  \'{e}tude des \'{e}quations de {N}avier-{S}tokes (i).
\newblock In {\em Probl\`{e}mes d' \'{e}volution non lin\'{e}aires},
  S\'{e}minaire de Nice, 1974--1976.

\bibitem{IKKM2021}
S.~Israwi, K.~Kalisch, T.~Katsaounis, and D.~Mitsotakis.
\newblock A regularized shallow-water waves system with slip-wall boundary
  conditions in a basin: theory and numerical analysis.
\newblock {\em Nonlinearity}, 35:750, dec 2021.

\bibitem{IKM2023}
S.~Israwi, Y.~Khalifeh, and D.~Mitsotakis.
\newblock Equations for small amplitude shallow water waves over small
  bathymetric variations.
\newblock {\em Proc. R. Soc. Ser. A.}, 479:20230191, 2023.

\bibitem{KMS2020}
T.~Katsaounis, D.~Mitsotakis, and G.~Sadaka.
\newblock {B}oussinesq-{P}eregrine water wave models and their numerical
  approximation.
\newblock {\em J. Comput. Phys}, 417:109579, 2020.

\bibitem{KD2013}
M.~Kazolea and A.~Delis.
\newblock A well-balanced shock-capturing hybrid finite volume--finite
  difference numerical scheme for extended 1d {B}oussinesq models.
\newblock {\em Appl. Numer. Math.}, 67:167--186, 2013.

\bibitem{KDNS2012}
M.~Kazolea, A.~Delis, I.~Nikolos, and C.~Synolakis.
\newblock An unstructured finite volume numerical scheme for extended 2{D}
  {B}oussinesq-type equations.
\newblock {\em Coastal Engineering}, 69:42--66, 2012.

\bibitem{KE2019}
D.~Ketcheson.
\newblock {Relaxation Runge-Kutta methods: Conservation and stability for
  inner-product norms}.
\newblock {\em SIAM J. Numer. Anal.}, 57:2850--2870, 2019.

\bibitem{KDV1895}
D.J. Korteweg and G.~de~Vries.
\newblock {XLI. On the change of form of long waves advancing in a rectangular
  canal, and on a new type of long stationary waves}.
\newblock {\em The London, Edinburgh, and Dublin Philosophical Magazine and
  Journal of Science}, 39:422--443, 1895.

\bibitem{LR2024}
J.~Lampert and H.~Ranocha.
\newblock Structure-preserving numerical methods for two nonlinear systems of
  dispersive wave equations.
\newblock {\em arXiv preprint arXiv:2402.16669}, 2024.

\bibitem{LL2013}
L.~Landau and E.~Lifshitz.
\newblock {\em Fluid Mechanics: Volume 6}.
\newblock Elsevier Science, 2013.

\bibitem{Lannes2013}
D.~Lannes.
\newblock {\em {The water waves problem: Mathematical analysis and
  asymptotics}}.
\newblock American Mathematical Society, Rhode Island, 2013.

\bibitem{fenics2012}
A.~Logg, K.-A. Mardal, G.~N. Wells, et~al.
\newblock {\em Automated solution of differential equations by the finite
  element method}.
\newblock Springer Berlin, Heidelberg, 2012.

\bibitem{MBL2002}
P.~Madsen, H.~Bingham, and H.~Liu.
\newblock A new {B}oussinesq method for fully nonlinear waves from shallow to
  deep water.
\newblock {\em J. Fluid Mech.}, 462:1--30, 2002.

\bibitem{MMS1991}
P.~Madsen, R.~Murray, and O.~S{\o}rensen.
\newblock A new form of the {B}oussinesq equations with improved linear
  dispersion characteristics.
\newblock {\em Coastal Engineering}, 15:371--388, 1991.

\bibitem{MS1992}
P.~Madsen and O.~S{\o}rensen.
\newblock A new form of the {B}oussinesq equations with improved linear
  dispersion characteristics. part 2. a slowly-varying bathymetry.
\newblock {\em Coastal Engineering}, 18:183--204, 1992.

\bibitem{MM2023}
D.~Mantzavinos and D.~Mitsotakis.
\newblock {Extended water wave systems of Boussinesq equations on a finite
  interval: Theory and numerical analysis}.
\newblock {\em J. Math. Pures Appl.}, 169:109--137, 2023.

\bibitem{Mitsotakis2009}
D.~Mitsotakis.
\newblock {{Boussinesq} systems in two space dimensions over a variable bottom
  for the generation and propagation of tsunami waves}.
\newblock {\em Math. Comp. Simul.}, 80:860--873, 2009.

\bibitem{MID2014}
D.~Mitsotakis, B.~Ilan, and D.~Dutykh.
\newblock {On the Galerkin/finite element method for the Serre equations}.
\newblock {\em J. Sci. Comput.}, 61:166--195, 2014.

\bibitem{MRKS2021}
D.~Mitsotakis, H.~Ranocha, D.~Ketcheson, and E.~S\"{u}li.
\newblock A conservative fully discrete numerical method for the regularized
  shallow water wave equations.
\newblock {\em SIAM J. Sci. Comput}, 43:B508--B537, 2021.

\bibitem{NDS2012}
A.~Nachbin and V.~Da~Silva Sim\~{o}es.
\newblock Solitary waves in open channels with abrupt turns and branching
  points.
\newblock {\em J. Nonlinear Math. Phys}, 19:1240011, 2012.

\bibitem{Nitche}
J.C.C. Nitsche.
\newblock {\"U}ber ein variationsprinzip zur l{\"o}sung von
  {D}irichlet-problemen bei verwendung von teilr{\"a}umen, die keinen
  randbedingungen unterworfen sind.
\newblock {\em Abhandlungen aus dem Mathematischen Seminar der Universit{\"a}t
  Hamburg}, 36:9--15, 1971.

\bibitem{Nwogu93}
O.~Nwogu.
\newblock Alternative form of {B}oussinesq equations for nearshore wave
  propagation.
\newblock {\em J. Waterw. Port Coast. Ocean Eng.}, 119:618--638, 1993.

\bibitem{P1967}
H.~Peregrine.
\newblock Long waves on beaches.
\newblock {\em J. Fluid Mech.}, 27:815--827, 1967.

\bibitem{Petv1976}
V.I. Petviashvili.
\newblock Equation of an extraordinary soliton (ion acoustic wave packet
  dispersion in plasma).
\newblock {\em Sov. J. Plasma Phys.}, 2:257--258, 1976.

\bibitem{PZR2021}
J.~Pitt, C.~Zoppou, and S.~Roberts.
\newblock Solving the fully nonlinear weakly dispersive {S}erre equations for
  flows over dry beds.
\newblock {\em Int. J. Numer. Methods Fluids}, 93:24--43, 2021.

\bibitem{RK2020}
H.~Ranocha and D.~Ketcheson.
\newblock Relaxation {R}unge-{K}utta methods for {H}amiltonian problems.
\newblock {\em J. Sci. Comput.}, 84, 2020.

\bibitem{RMK}
H.~Ranocha, D.~Mitsotakis, and D.~Ketcheson.
\newblock A broad class of conservative numerical methods for dispersive wave
  equations.
\newblock {\em Commun. Comput. Phys.}, pages 979--1029, 2021.

\bibitem{SM1995}
H.~Sch{\"a}ffer and P.~Madsen.
\newblock Further enhancements of {B}oussinesq-type equations.
\newblock {\em Coastal Engineering}, 26:1--14, 1995.

\bibitem{S1953I}
F.~Serre.
\newblock {Contribution \`{a} l' \'{e}tude des \'{e}coulements permanents et
  variables dans les canaux}.
\newblock {\em Houille Blanche}, 3:374--388, 1953.

\bibitem{S1953II}
F.~Serre.
\newblock {Contribution \`{a} l' \'{e}tude des \'{e}coulements permanents et
  variables dans les canaux}.
\newblock {\em Houille Blanche}, 6:830--872, 1953.

\bibitem{SG1969}
C.H. Su and C.S. Gardner.
\newblock {KdV equation and generalizations. Part III. Derivation of
  Korteweg-de Vries equation and Burgers equation}.
\newblock {\em J. Math. Phys.}, 10:536--539, 1969.

\bibitem{Tanaka1986}
M.~Tanaka.
\newblock {The stability of solitary waves}.
\newblock {\em Phys. Fluids}, 29:650--655, 1986.

\bibitem{WWY1992}
K.‐H. Wang, T.~Wu, and G.~Yates.
\newblock Three‐dimensional scattering of solitary waves by vertical
  cylinder.
\newblock {\em J. Waterw. Port Coast. Ocean Eng.}, 118:551--566, 1992.

\bibitem{WMKB2020}
W.~Wang, T.~Martin, A.~Kamath, and H.~Bihs.
\newblock An improved depth-averaged nonhydrostatic shallow water model with
  quadratic pressure approximation.
\newblock {\em Int. J. Numer. Methods Fluids}, 92:803--824, 2020.

\bibitem{Whitham}
G.~Whitham.
\newblock {\em Linear and nonlinear waves}.
\newblock Wiley, New York, 1974.

\bibitem{Wu1981}
T.~Wu.
\newblock Long waves in ocean and coastal waters.
\newblock {\em J. Eng. Mech.}, 107:501--522, 1981.

\end{thebibliography}

\end{document}